\newcommand{\cal}{\mathcal}
\def\epsilon{\varepsilon}
\def\phi{\varphi}
\def\hat{\widehat}
\def\subset{\subseteq}
\newcommand{\Supp}{\mbox{Supp}}
\newcommand{\supp}{\mbox{Supp}}
\newcommand{\Curr}{\mbox{Curr}}
\newcommand{\Aut}{\mbox{Aut}}
\newcommand{\card}{\mbox{card}}
\newcommand{\FN}{F_N}   % F ou F_n ou F_N ?
\newcommand{\R}{\mathbb R}
\newcommand{\Z}{\mathbb Z}
\newcommand{\N}{\mathbb N}
\newcommand{\Per}{\text{\rm Per}}
\def\strutdepth{\dp\strutbox}
\def \ss{\strut\vadjust{\kern-\strutdepth \sss}}
\def \sss{\vtop to \strutdepth{
\baselineskip\strutdepth\vss\llap{$\diamondsuit\;\;$}\null}}
\def\strutdepth{\dp\strutbox}
\def \sst{\strut\vadjust{\kern-\strutdepth \ssss}}
\def \ssss{\vtop to \strutdepth{
\baselineskip\strutdepth\vss\llap{$\spadesuit\;\;$}\null}}
\def\strutdepth{\dp\strutbox}
\def \ssh{\strut\vadjust{\kern-\strutdepth \sssh}}
\def \sssh{\vtop to \strutdepth{
\baselineskip\strutdepth\vss\llap{$\heartsuit\;\;$}\null}}
\def\qed{\hfill\rlap{$\sqcup$}$\sqcap$\par}
\def\bar{\overline}
\def\strutdepth{\dp\strutbox}
\def \ss{\strut\vadjust{\kern-\strutdepth \sss}}
\def \sss{\vtop to \strutdepth{
\baselineskip\strutdepth\vss\llap{$\diamondsuit\;\;$}\null}}
\def\strutdepth{\dp\strutbox}
\def \sst{\strut\vadjust{\kern-\strutdepth \ssss}}
\def \ssss{\vtop to \strutdepth{
\baselineskip\strutdepth\vss\llap{$\spadesuit\;\;$}\null}}
\def\qed{\hfill\rlap{$\sqcup$}$\sqcap$\par}
\newtheorem{thm}{Theorem}[section]
\newtheorem{lem}[thm]{Lemma}
\newtheorem{prop}[thm]{Proposition}
\theoremstyle{definition}
\newtheorem{defn}[thm]{Definition}
\newtheorem{example}[thm]{Example}
\newtheorem{rem}[thm]{Remark}
\newtheorem{defn-rem}[thm]{Definition-Remark}
\newtheorem{convention}[thm]{Convention}
\newtheorem{warning}[thm]{Warning}
\theoremstyle{remark}
\numberwithin{equation}{section}
\begin{document}

\author[N.~Bedaride]{Nicolas B\'edaride} 
\author[A.~Hilion]{Arnaud Hilion}
\author[M.~Lustig]{Martin Lustig}

\address{\tt 
Aix Marseille Universit\'e, CNRS, 
I2M UMR 7373,
13453  Marseille, 
France}

\email{\tt nicolas.bedaride@univ-amu.fr}

\address{\tt
Institut de Math\'ematiques de Toulouse, UMR 5219, Universit\'e de Toulouse, CNRS, UPS, F-31062 Toulouse Cedex~9, France}

\email{\tt arnaud.hilion@math.univ-toulouse.fr}

\address{\tt 
Aix Marseille Universit\'e, CNRS, 
I2M UMR 7373,
13453  Marseille, 
France}

\email{\tt martinlustig@gmx.de}

\title[The measures transfer]
{The measure transfer for 
subshifts 
induced by a morphism of free monoids}
 
\begin{abstract} 
Every non-erasing monoid morphism $\sigma: \cal A^* \to \cal B^*$ induces a {\em measure transfer map} $\sigma_X^\cal M: \cal M(X) \to \cal M(\sigma(X))$ between 
the 
measure cones $\cal M(X)$ and $\cal M(\sigma(X))$, associated to any subshift $X \subset \cal A^\Z$ and its image subshift $\sigma(X) \subset \cal B^\Z$ respectively. We 
define and study 
this map in detail and show that it is continuous, linear and functorial.  It also turns out to be surjective 
\cite{BHL2.8-II}. Furthermore, an efficient 
technique to compute 
the value of the transferred measure $\sigma_X^\cal M(\mu)$ on any cylinder $[w]$ (for $w \in \cal B^*$) is presented.

\smallskip
\noindent
{\bf Theorem:}
If a non-erasing morphism $\sigma: \cal A^* \to \cal B^*$ is injective on the shift-orbits of some subshift $X \subset \cal A^\Z$, then $\sigma^\cal M_X$ is injective.

\smallskip

The assumption on $\sigma$ that it is  ``injective on the shift-orbits of $X$'' is strictly weaker than ``recognizable in $X$'', and strictly stronger than ``recognizable for aperiodic points in $X$''. The last assumption does in general not suffice to obtain the injectivity of the measure transfer map $\sigma_X^\cal M$.

\end{abstract} 
 
\thanks{The first author was partially supported by 
ANR Project IZES ANR-22-CE40-0011}

\subjclass[2010]{Primary 37B10, Secondary 37A25, 37E25}
 
\keywords{subshift, invariant measure, recognizable monoid morphism}
 
\maketitle

\section{Introduction}
\label{sec:intro}

The prime objects of this paper are morphisms $\sigma: \cal A^* \to \cal B^*$ of free monoids $\cal A^*$ and $\cal B^*$ over finite sets $\cal A$ and $\cal B$ respectively. These sets are called {\em alphabets}, and their elements are {\em letters}, denoted here by $a_k \in \cal A$ or $b_j \in \cal B$. To $\cal A$ and $\cal B$ there are canonically associated the spaces $\cal A^\Z$ and $\cal B^\Z$, equipped both with the product topology and a {\em shift operator}, here always denoted by $T$. All morphisms $\sigma$ in this paper are {\em non-erasing} (i.e. 
none of the $a_k \in \cal A$ are mapped to the empty word), 
and such $\sigma$ induces canonically a map $\sigma^\Z: \cal A^\Z \to \cal B^\Z$. But while $\sigma^\Z$ is continuous, in general it fails to be a ``morphism of dynamical systems'' from $(\cal A^\Z, T)$ to $(\cal B^\Z, T)$ in the classical topological dynamics meaning, in that most of the time we have
\begin{equation}
\label{eq1.1}
T \circ \sigma^\Z \,\, \neq \,\, \sigma^\Z \circ T \, .
\end{equation}
This creates a number of well known technical problems; for instance any non-empty, closed, shift-invariant subset (called a {\em subshift}) $X \subset \cal A^\Z$ has $\sigma^\Z$-image that is in general not shift-invariant, and thus not a subshift by itself. Still, there is a well defined image subshift (see Definition-Remark \ref{image-shift} below), denoted here by $\sigma(X)$, which has been studied previously in many occasions. 

\medskip

The focus of this paper is on the set $\cal M(\cal A^\Z)$ of measures $\mu$ on $\cal A^\Z$ that are {\em invariant}, which means that 
$\mu$ is a 
finite 
Borel measure, and that the shift operator preserves $\mu\,$. The map $\sigma^\Z$ gives us directly the classical ``push-forward'' measure $\mu_*$ on $\cal B^\Z$, but because of (\ref{eq1.1}) this measure will almost never be invariant, as it will not be preserved by the shift operator on $\cal B^\Z$. Nevertheless, any invariant measure $\mu$ on $\cal A^\Z$ can canonically be ``transferred'' by $\sigma$ to define an invariant measure on $\cal B^\Z$. 
Setting up properly and studying 
carefully this 
{\em measure transfer map}
$$\sigma^\cal M: \cal M(\cal A^\Z) \to \cal M(\cal B^\Z)$$
induced by any non-erasing free monoid morphism $\sigma: \cal A^* \to \cal B^*$ is the main purpose of this paper. 
We show (see Subsection \ref{sec:3.4} and Lemma \ref{ergodic-new}):

\begin{prop}
\label{1.3}
Let $\sigma: \cal A^* \to \cal B^*$ be any non-erasing morphism, and let $X \subset \cal A^\Z$ be any subshift over $\cal A$, with image subshift $Y := \sigma(X) \subset \cal B^\Z$. Then the induced measure transfer map $\sigma^\cal M$ restricts/co-restricts to a well defined map
$$\sigma_X^\cal M: \cal M(X) \to \cal M(Y) 
\, , \,\, \mu \mapsto \mu^\sigma$$
which has the following properties:
\begin{enumerate}
\item
$\sigma_X^\cal M$ is an $\R_{\geq 0}$-linear map of cones.
\item
$\sigma_X^\cal M$ is continuous.
\item
$\sigma_X^\cal M$ is functorial: $(\sigma' \circ \sigma)_X^\cal M = {\sigma'}_{\sigma(X)}^\cal M \circ \sigma_X^\cal M$ 
\item
If the subshift $X' \subset X$ is the support of 
a measure $\mu \in \cal M(X)$, then $\sigma(X')$ is the support of $\sigma_X^\cal M(\mu)$.
\item
If a measure $\mu \in \cal M(X)$ is ergodic, then so is $\mu^\sigma \in \cal M(Y)$.
\end{enumerate}
(In addition, the map $\sigma_X^\cal M$ is surjective, see Proposition 4.4 of \cite{BHL2.8-II}.)
\end{prop}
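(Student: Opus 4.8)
The plan is to realize $\sigma_X^{\cal M}$ as the push-forward of a natural tower measure and then to read off all five properties, mostly from an explicit formula for the value of $\mu^\sigma$ on cylinders. For $x \in X$ set $\ell(x) = |\sigma(x_0)| \geq 1$ (finite and positive since $\sigma$ is non-erasing) and form the tower
\[
\widehat X = \{(x,i) : x \in X,\ 0 \leq i < \ell(x)\}
\]
with roof map $\widehat T(x,i) = (x,i+1)$ if $i+1 < \ell(x)$ and $\widehat T(x,i) = (Tx,0)$ otherwise. First I would record that the coding map $p \colon \widehat X \to \cal B^\Z$, $(x,i) \mapsto T^i \sigma^\Z(x)$, is continuous with image exactly $Y = \sigma(X)$ and satisfies the intertwining relation $p \circ \widehat T = T \circ p$; the one nontrivial instance of this identity is precisely the reformulation $T^{\ell(x)} \sigma^\Z(x) = \sigma^\Z(Tx)$ of the non-commutation \eqref{eq1.1}. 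Given $\mu \in \cal M(X)$, I equip $\widehat X$ with the finite measure $\widehat\mu$ whose restriction to each floor is a copy of $\mu$; this $\widehat\mu$ is $\widehat T$-invariant, so $\mu^\sigma := p_* \widehat\mu$ is a finite $T$-invariant Borel measure with support contained in $Y$, i.e.\ an element of $\cal M(Y)$, and this defines $\sigma_X^{\cal M}$.

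The second ingredient is the cylinder formula (the ``efficient technique'' announced in the abstract and established in Section \ref{sec:image-measure}): for $w \in \cal B^*$ the preimage $p^{-1}([w])$ meets each floor $i$ in a finite union of $\cal A$-cylinders, whence
\[
\mu^\sigma([w]) = \sum \mu([u]),
\]
a \emph{finite} $\R_{\geq 0}$-linear combination, the sum ranging over the finitely many pairs (offset $i$, word $u \in \cal A^*$) for which $w$ occurs inside $\sigma(u)$ at the position dictated by $i$. Properties (1) and (2) then follow at once: for each fixed $w$ the assignment $\mu \mapsto \mu^\sigma([w])$ is a fixed non-negative linear combination of the evaluations $\mu \mapsto \mu([u])$, and since on a subshift weak-$*$ convergence is equivalent to convergence on all cylinders (locally constant functions are dense in $C(X)$), the map $\sigma_X^{\cal M}$ is $\R_{\geq 0}$-linear and continuous. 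For the support statement (5), the same formula gives $\mu^\sigma([w]) > 0$ if and only if $w$ lies in the language of $\sigma(X')$, where $X' = \supp(\mu)$; since the support of an invariant measure is exactly the subshift whose language consists of its positive-measure cylinders, this yields $\supp(\mu^\sigma) = \sigma(X')$.

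For functoriality (4), given a further non-erasing $\sigma' \colon \cal B^* \to \cal C^*$, I would observe that the tower for $\sigma' \circ \sigma$ over $X$ is canonically the tower for $\sigma'$ over $\sigma(X)$ stacked on the tower for $\sigma$ over $X$: the height $|\sigma'(\sigma(a))|$ decomposes as the sum of the heights $|\sigma'(b)|$ over the letters $b$ occurring in $\sigma(a)$. Under this identification the coding maps compose, $p_{\sigma'\sigma} = p_{\sigma'} \circ p_\sigma$, and the tower measures correspond, giving $(\sigma'\circ\sigma)_X^{\cal M} = {\sigma'}_{\sigma(X)}^{\cal M} \circ \sigma_X^{\cal M}$ (alternatively, the identity can be checked cylinder by cylinder via the formula). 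The surjectivity claim (3) is the most delicate point and is the subject of the companion paper \cite{BHL2.8-II}; the strategy there is to produce, for a given $\nu \in \cal M(Y)$, a preimage by pulling $\nu$ back through a measurable section of $p$, which exists off a null set thanks to the desubstitution structure of $Y$.

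The main obstacle I anticipate is not any single property in isolation but the bookkeeping underlying well-definedness and the cylinder formula: one must fix the bi-infinite indexing convention for $\sigma^\Z$ so that the intertwining $p \circ \widehat T = T \circ p$ holds exactly, and then correctly enumerate the offsets $i$ and words $u$ that contribute to a given $[w]$. This combinatorial analysis is where \eqref{eq1.1} genuinely bites, and it is the technical heart on which continuity, linearity and the support computation all rest.
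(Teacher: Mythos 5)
Your proof is correct, and it reaches the result by a genuinely different packaging than the paper's, although the two constructions produce the same measure. The paper never introduces a tower: it decomposes $\sigma = \alpha_\sigma \circ \pi_\sigma$ into a subdivision morphism followed by a letter-to-letter morphism (equation \eqref{eq:can-dec}, Definition-Remark \ref{3.5}), defines the subdivision measure purely combinatorially by the weight-function formula $\mu_{\ell_\sigma}(w) = \mu(\hat w)$, verifies its shift-invariance by checking the Kirchhoff equalities by hand (Lemma \ref{old-rem}), and then applies the classical push-forward under $\alpha_\sigma$; linearity and functoriality are left as an exercise on this decomposition (Lemma \ref{3.6}), continuity is proved factor by factor (Lemma \ref{3.6.5d}), and the support equality is proved by a language argument (Lemma \ref{3.7d}). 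Your Kakutani tower is exactly a dynamical model of the paper's subdivision subshift: the map $(x,i) \mapsto T^i \pi_\sigma^\Z(x)$ is a homeomorphism onto $\pi_\sigma^\Sigma(X)$ conjugating $\widehat T$ to the shift and carrying $\widehat\mu$ to $\mu_{\ell_\sigma}$, and composing it with $\alpha_\sigma^\Z$ gives your coding map $p$, whence $p_*\widehat\mu = (\alpha_\sigma)_*(\mu_{\ell_\sigma}) = \sigma^{\cal M}(\mu)$. That one-line identification should be added to your write-up, since the proposition concerns \emph{the} measure transfer map defined earlier in the paper, not a map built from scratch. What your route buys: invariance of the tower measure and the intertwining $p \circ \widehat T = T \circ p$ are standard ergodic-theoretic facts, so the hands-on Kirchhoff verification disappears; the cylinder formula (which the paper establishes separately as a computational tool, Proposition \ref{3.5.5}) then delivers (1), (2) and (5) in one stroke; and stacking towers is an honest proof of (4), where the paper offers only an unproved ``elementary exercise''. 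What the paper's route buys: everything stays inside weight functions on words, which is the formalism its computational sections and the sequel actually use. Two caveats, neither fatal: property (3) is likewise not proved in the paper but deferred to \cite{BHL2.8-II}, so your deferral matches it exactly, though your parenthetical strategy (pulling $\nu$ back through a measurable section of $p$) is not obviously sound, since a section pullback need not be $\widehat T$-invariant; and your height function $|\sigma(x_0)|$ must be aligned with the paper's indexing convention, under which $\sigma(x_1)$ occupies positions $1,\dots,|\sigma(x_1)|$ and the intertwining reads $T^{|\sigma(x_1)|}\sigma^\Z({\bf x}) = \sigma^\Z(T{\bf x})$ --- a bookkeeping point you rightly flag yourself.
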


As indicated already before, the above introduced measure $\mu^\sigma$, obtained from a given measure $\mu \in \cal M(X)$ via the measure transfer induced by a morphism $\sigma$, is almost always quite different from the push-forward measure $\mu_*$ as defined by $\sigma$. It turns out that the popular sources for basics in symbolic dynamics do only consider this very particular case $\mu^\sigma = \mu_*\,$, see for instance \cite{Queff}, Proposition 5.22\,. The general set-up for the measure transfer, as studied here, doesn't seem to be available in the existing literature. The basic properties of the transferred measure proved below are in particular used in our cousin paper \cite{BHL2.8-II} for the purpose of an explicit treatment of the transferred measure by means of S-adic expansions of a given subshift.

In order to underline that, despite its divergence from the usual push-forward concept, the measure transfer is a very natural tool under the given circumstances, we'd like to mention that there are several ``ad hoc'' occasions where the measure transfer has already appeared beforehand, see Example \ref{1.2} below. A special but rather relevant such case takes place if the given morphism is an endomorphisms $\sigma: \cal A^* \to \cal A^*$, thus called a {\em  substitution}. We then have the symbolic dynamics version of the well known and important situation where a system of zero entropy coexists with a system of positive entropy, giving rise to a wealth of subtle and profound interactions. For geometers, a geometric counterpart is obtained, for example, by considering the attractive lamination of a pseudo-Anosov homeomorphism $f$ on a hyperbolic surface: this is a geodesic lamination of zero entropy (for the geodesic flow), but the action of the pseudo-Anosov is of positive entropy (given by the dilation coefficient $\lambda$ of this homeomorphism). In such contexts, it is natural to consider the invariant measures of the zero-entropy system that are {\em projectively} invariant under the positive entropy transformation operator (the substitution $\sigma$ or the pseudo-Anosov $f$ in the above examples). Relating back to our main topic, the measure transfer, we observe:

\begin{rem}
\label{intro-new}
Any non-erasing substitution $\sigma: \cal A^* \to \cal A^*$ with $\sigma^n(\cal A) \cap \cal A = \emptyset$ for some sufficiently large $n \geq 1$
defines a {\em substitutive subshift} $X_\sigma$ which has the property that its image subshift $\sigma(X_\sigma)$ is equal to $X_\sigma\,$.
To every non-negative eigenvector $\vec v$ of the transition matrix $M(\sigma)$, with eigenvalue $\lambda > 1$, there is canonically associated an invariant measure $\mu_{\vec v}$ on the subshift $X_\sigma$ (see \cite{BHL2}). It can be shown (using Proposition 4.3 and Remark 4.2 of \cite{BHL2.8-II}) that the measure transfer map induced by $\sigma$ acts on $\mu_{\vec v}$ as homothety with stretching factor $\lambda\,$, giving
$$\mu_{\vec v}^\sigma = \lambda \cdot \mu_{\vec v}\,  .$$
\end{rem}

The last equality indicates already that 
the transferred measure of a probability measure will in general not be probability. Indeed, we have
(see Remark \ref{letter-image-frequency} and Proposition \ref{22.7} below):

\begin{prop}
\label{non-proba-image}
For any invariant measure $\mu$ on the full shift $\cal A^\Z$ and any non-erasing morphism $\sigma: \cal A^* \to \cal B^*$ the transferred measure 
$\mu^\sigma = \sigma^\cal M(\mu)$ satisfies 
$$
\mu^\sigma(\cal B^\Z) = \sum_{a_k \in \cal A} \sum_{b_j \in \cal B} |\sigma(a_k)|_{b_j} \cdot \mu([a_k]) =  \sum_{a_k \in \cal A} |\sigma(a_k)| \cdot \mu([a_k]) \, .$$
More specifically, for the ``vectors'' of letter frequencies we have the 
matrix equality
$$([\mu^\sigma(b_j)])_{b_j \in \cal B} \,\, = \,\, M(\sigma) \cdot ([\mu(a_k)])_{a_k \in \cal A} \, .$$
\end{prop}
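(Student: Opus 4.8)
The plan is to reduce the whole statement to the single per-letter identity and then to read that identity directly off the construction of the transferred measure. First I would observe that the two displayed formulas are not independent: the full-mass formula follows from the matrix identity by summing over $b_j \in \cal B$. Indeed, the one-letter cylinders $\{[b_j]\}_{b_j \in \cal B}$ form a finite clopen partition of $\cal B^\Z$ (according to the value at coordinate $0$), and $\mu^\sigma$ is a finite Borel measure, so $\mu^\sigma(\cal B^\Z) = \sum_{b_j \in \cal B} \mu^\sigma([b_j])$. Substituting $\mu^\sigma([b_j]) = \sum_{a_k \in \cal A} |\sigma(a_k)|_{b_j}\,\mu([a_k])$ and exchanging the two finite sums reproduces exactly the first formula. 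Hence the entire statement reduces to establishing this per-letter identity, which is precisely the $b_j$-coordinate of the matrix equation $([\mu^\sigma(b_j)]) = M(\sigma)\cdot ([\mu(a_k)])$, once one reads $\mu^\sigma(b_j)$ as $\mu^\sigma([b_j])$ and $\mu(a_k)$ as $\mu([a_k])$.

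To prove the per-letter identity I would return to the block decomposition $\sigma^\Z(x) = \cdots \sigma(x_{-1})\,\sigma(x_0)\,\sigma(x_1)\cdots$ underlying the transfer. Because $\sigma$ is non-erasing, every block $\sigma(x_i)$ is a non-empty word, so the coordinates of $\sigma^\Z(x)$ are partitioned unambiguously into consecutive blocks, each coordinate lying in exactly one block $\sigma(x_i)$ at a well-defined in-block offset $m$ with $0 \le m < |\sigma(x_i)|$. The transferred measure assigns to a cylinder $[w]$ the $\mu$-weighted count of the occurrences of $w$ read off along $\sigma^\Z(x)$ as the origin ranges over all in-block offsets; this is exactly the cylinder-computation technique announced in the introduction (compare Proposition \ref{1.3} and Proposition \ref{22.7}). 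For a word of length one this count is especially transparent: an occurrence of $b_j$ occupies a single coordinate, hence lies entirely inside one block $\sigma(x_0)$ and never straddles a block boundary.

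It then remains to count. For a fixed letter value $x_0 = a_k$, the in-block offsets $m$ with $\sigma(a_k)_m = b_j$ number, by the very definition of $|\sigma(a_k)|_{b_j}$, exactly $|\sigma(a_k)|_{b_j}$; and each such positioned occurrence contributes the weight $\mu([a_k])$, since the event selecting it only constrains the letter $x_0 = a_k$ and is independent of the offset $m$. Integrating over the letter value and summing over $a_k \in \cal A$ then yields
$$\mu^\sigma([b_j]) \,=\, \sum_{a_k \in \cal A} \card\{\, m : \sigma(a_k)_m = b_j \,\}\cdot \mu([a_k]) \,=\, \sum_{a_k \in \cal A} |\sigma(a_k)|_{b_j}\cdot \mu([a_k]) \, ,$$
which is the desired $b_j$-coordinate of $M(\sigma)\cdot([\mu(a_k)])$. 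Summing this over $b_j \in \cal B$ recovers the total-mass formula as explained above, completing the proof.

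The genuinely delicate point — and the place where I expect the real work to sit in the surrounding theory — is not the letter case at all, but the passage to cylinders $[w]$ of length greater than one, where occurrences straddling consecutive blocks must be accounted for; that is the substance of the general cylinder-computation technique. Restricting to $|w| = 1$ removes precisely that difficulty, so here the only things to get right are the correct instantiation of the transfer definition and the use of the non-erasing hypothesis, which guarantees that every block is non-empty and hence that the block partition of $\sigma^\Z(x)$, and with it the in-block offset of each single-letter occurrence, is well defined. Shift-invariance of $\mu$ is what legitimizes $\mu^\sigma$ as an invariant measure in the first place, but the value on the one-letter cylinders comes out as a direct integral of the counting function $x \mapsto |\sigma(x_0)|_{b_j}$.
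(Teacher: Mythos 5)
Your proof is correct and takes essentially the same route as the paper: the crucial point in both is that an occurrence of a single letter $b_j$ cannot straddle adjacent blocks $\sigma(x_i)$, $\sigma(x_{i+1})$, so only one-letter preimages $a_k$ contribute, each with multiplicity $|\sigma(a_k)|_{b_j}$ and weight $\mu([a_k])$ --- this is exactly the paper's Remark \ref{letter-image-frequency}, obtained as the length-one specialization of the essential-occurrence formula of Proposition \ref{3.5.5}, which then gives Proposition \ref{22.7}. The total-mass formula follows in both treatments by summing the per-letter identity over the clopen partition $\cal B^\Z = \bigcup_{b_j \in \cal B} [b_j]$.
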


Here $|w|_u$ denotes the number of (possibly overlapping) occurrences of the word $u$ as factor in the word $w$, and $[w]$ is the ``cylinder'' that consists of all biinfinite words $\ldots x_n x_{n+1} \ldots$ for which the 
positive half-word 
$x_1 x_2 \ldots$ starts with $w$ as prefix. By $M(\sigma)$ we denote the {\em incidence matrix} of 
$\sigma$, 
which has 
coefficient $|\sigma(a_k)|_{b_j}$ at the position $(j, k)$.

In order to compute the 
transferred 
measure $\mu^\sigma([w'])$ of 
an arbitrary 
cylinder $[w'] \subset \cal B^\Z$, as done in Proposition \ref{non-proba-image} for the special case $w' = b_j \in \cal B$, it turns out 
that it is useful to introduce the number $\lfloor\sigma(w) \rfloor_{u}$  of {\em essential occurrences} of $u$ as a factor of $\sigma(w)$, by which we mean that the first letter of $u$ occurs in the $\sigma$-image of first letter of $w$, and the last letter of $u$ occurs in the $\sigma$-image of last letter of $w$. The following is proved in Proposition \ref{3.5.5} below, and several examples of concrete computations of cylinder values $\mu^\sigma([w'])$ are given in Sections \ref{sec:image-measure} and \ref{sec:4d}.

\begin{prop}
\label{0.formula}
Let $\sigma: \cal A^* \to \cal B^*$ be any non-erasing monoid morphism, and let $\mu$ be any invariant measure on $\cal A^\Z$. Then for any $w' \in \cal B^*$ 
with $|w'| \geq 2$ 
the transferred measure $\mu^\sigma := \sigma^\cal M(\mu)$, evaluated on the cylinder $[w']$, 
is given by the finite sum
$$
\mu^\sigma([w']) = \sum_{w \,\in\, W(w')}
{\lfloor\sigma(w) \rfloor}_{w'} \cdot \mu([w])\, ,$$
for $W(w') = {\big \{}w \in \cal A^* \,: 
\, |w| \leq \frac{|w'| - 2}{\min\{|\sigma(a_k)| \,\,:\,\, a_k \in \cal A\}}+2{\big \}}$.
\end{prop}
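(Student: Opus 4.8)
The plan is to compute $\mu^\sigma([w'])$ directly from the construction of the measure transfer in Section~\ref{sec:image-measure}, by sorting the occurrences of $w'$ in the images according to the shortest factor of the source word that carries each of them. I would read off from that construction that $\mu^\sigma([w'])$ equals the $\mu$-weighted total number of occurrences of $w'$ at the origin among the images, where the origin is allowed to sit at any of the $|\sigma(x_1)|$ offsets inside the image block $\sigma(x_1)$ of the first source letter read by the cylinder; this is the same mechanism that produced the total mass and letter-frequency identities of Proposition~\ref{non-proba-image}. Concretely one works on the tower $\Omega = \{(x,p) : x \in \mathcal{A}^\Z,\ 0 \le p < |\sigma(x_1)|\}$ carrying the lifted measure $\tilde\mu$, the product of $\mu$ on the base with counting measure on the fibers, together with the concatenation map $\Phi : \Omega \to \mathcal{B}^\Z$ that places position $1$ at offset $p$ inside $\sigma(x_1)$ of the biinfinite concatenation $\ldots \sigma(x_0)\sigma(x_1)\sigma(x_2)\ldots\,$; then $\mu^\sigma([w']) = \tilde\mu(\Phi^{-1}[w'])$, and its total mass $\tilde\mu(\Omega) = \sum_{a_k} |\sigma(a_k)|\,\mu([a_k])$ recovers Proposition~\ref{non-proba-image} as a consistency check.

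Next I would set up the combinatorial decomposition. For $(x,p) \in \Phi^{-1}[w']$ the word $w'$ is read starting at offset $p$ inside $\sigma(x_1)$ and ends inside some block $\sigma(x_j)$ with $j \ge 1$ \emph{uniquely determined} by the occurrence. Setting $w := x_1 x_2 \cdots x_j$ exhibits $w'$ as an essential occurrence in $\sigma(w)$: its first letter lies in the $\sigma$-image of the first letter of $w$, and its last letter lies in the $\sigma$-image of the last letter of $w$. Conversely, a word $w \in \mathcal{A}^*$ together with one of its $\lfloor\sigma(w)\rfloor_{w'}$ essential occurrences fixes an offset $p$ inside $\sigma(w_1)$, and that occurrence lies entirely within $\sigma(w)$, hence is unaffected by the letters of $x$ past index $j$. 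The key step is to verify that $(x,p) \mapsto (w,\ \text{essential occurrence})$ is a bijection from $\Phi^{-1}[w']$ onto the disjoint union, over all $w$, of $\lfloor\sigma(w)\rfloor_{w'}$ copies of the base cylinder $[w]$ with $w$ placed at positions $1,\dots,|w|$. Uniqueness of the ending block $j$ prevents double counting, and essentiality guarantees $0 \le p < |\sigma(w_1)|$, so that $p$ is a legitimate fiber coordinate.

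Granting the bijection, the measure computation is bookkeeping: each of the $\lfloor\sigma(w)\rfloor_{w'}$ pieces attached to a fixed $w$ has the form $\{(x,p) : x \in [w]\}$ with $p$ frozen, so it carries $\tilde\mu$-mass $\mu([w])$ by the product structure of $\tilde\mu$ together with the shift-invariance of $\mu$. Summing over the disjoint pieces gives $\mu^\sigma([w']) = \sum_{w \in \mathcal{A}^*} \lfloor\sigma(w)\rfloor_{w'}\,\mu([w])$. It then remains to cut this sum down to the finite set $W(w')$, for which I would record the length inequality forced by essentiality: if $w'$ occurs essentially in $\sigma(w)$ with $w = w_1 \cdots w_n$ and $n \ge 2$, then $w'$ must contain the entire middle blocks $\sigma(w_2),\dots,\sigma(w_{n-1})$ together with at least one letter from each of the two outer blocks, whence $|w'| \ge 2 + \sum_{i=2}^{n-1} |\sigma(w_i)| \ge 2 + (n-2)\min_{a_k \in \mathcal{A}} |\sigma(a_k)|$. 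Rearranging yields $|w| \le \frac{|w'|-2}{\min_{a_k}|\sigma(a_k)|} + 2$, so $\lfloor\sigma(w)\rfloor_{w'} = 0$ whenever $w \notin W(w')$ and the sum collapses to the one in the statement.

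The step I expect to be the main obstacle is making the bijection of the second paragraph fully airtight, rather than the arithmetic of the third. One must argue that the ending block $j$, and hence the carrier $w$, is genuinely well defined for every $(x,p)$, and that occurrences landing on a block boundary are assigned to exactly one carrier. The hypothesis $|w'| \ge 2$ is what keeps the first/last-block distinction meaningful here; for a single-letter $w'$ the first and last letters coincide, the two outer blocks collapse, and one falls back on the letter-frequency formula of Proposition~\ref{non-proba-image} directly. Once disjointness and surjectivity of the decomposition are established and each fiber piece is matched with $\mu([w])$ via the invariance of $\mu$, comparing total masses with Proposition~\ref{non-proba-image} gives a final sanity check and completes the proof.
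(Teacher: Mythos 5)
Your proposal is correct and is essentially the paper's own argument in different packaging: your tower $(\Omega, \tilde\mu, \Phi)$ is precisely the subdivision measure $\mu_{\ell_\sigma}$ on $\cal A_\sigma^\Z$ together with the letter-to-letter map $\alpha_\sigma$ from the canonical decomposition (\ref{eq:can-dec}), so your identity $\mu^\sigma([w']) = \tilde\mu(\Phi^{-1}[w'])$ is formula (\ref{image-measure}), and your carrier bijection matching points of $\Phi^{-1}[w']$ with pairs $\bigl(w, \text{essential occurrence of } w' \text{ in } \sigma(w)\bigr)$ is the same reindexing $u \leftrightarrow \hat u$ that the paper uses in its proof of Proposition \ref{3.5.5}. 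Your length estimate forced by essentiality is likewise the paper's inequality (\ref{bound}).
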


\medskip

As stated above in Proposition \ref{1.3}, any invariant measure $\mu'$ on 
the image subshift $Y = \sigma(X)$ is equal to the transfer $\sigma_X^\cal M(\mu)$ of some measure $\mu$ on 
the given preimage subshift $X$. However, for a general non-erasing morphism $\sigma$, this measure $\mu$ will be far from uniquely determined by $\mu'$. 

On the other hand, the injectivity of the measure transfer map, if given, is a strong and useful tool in many circumstances, see Example \ref{6.9new}. 
In particular, it can be used as key ingredient for the construction of subshifts with entropy 0 but infinitely many distinct ergodic probability measures. In \S 7 of our cousin paper \cite{BHL2.8-II} this has been detailed out for the special case of minimal subshifts, so that the additional assumption that $\sigma$ is recognizable in $X$ could be used.
This additional assumption implies indeed the injectivity of the measure transfer map $\sigma_X^\cal M$ (see 
Corollary 3.9 of \cite{BHL2.8-II}). We show here the following stronger result (see Theorem \ref{5n.3} below):

\begin{thm}
\label{1n.inj}
Let $\sigma: \cal A^* \to \cal B^*$ be a non-erasing morphism of free monoids on finite alphabets, and let $X \subset \cal A^\Z$ be any subshift. 

If the map induced by $\sigma$ on the shift-orbits of $X$ is injective, 
then the measure transfer map $\sigma_X^\cal M: \cal M(X) \to \cal M(\sigma(X))$ is injective.
\end{thm}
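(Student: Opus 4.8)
The plan is to prove injectivity by passing to the ergodic decomposition and showing that $\sigma_X^\cal M$ is already injective on ergodic measures. The guiding picture is that the transfer is a \emph{pushforward through a tower}. Over $X$ one forms the (compact) tower $\widehat X = \{(x,i) : x \in X,\ 0 \le i < |\sigma(x_0)|\}$ with the natural tower map $\widehat T$, and the coding $\Pi(x,i) = T^i \sigma^\Z(x)$ is a continuous, $\widehat T$-to-$T$ equivariant surjection onto $\sigma(X)$. For every invariant $\mu$ the transferred measure coincides with $\Pi_*\widehat\mu$, where $\widehat\mu$ is the invariant tower lift of $\mu$: on cylinders this is exactly the content of Proposition \ref{0.formula}, since $\widehat\mu(\Pi^{-1}[w'])$ counts essential occurrences of $w'$ weighted by the $\mu([w])$. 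As $\mu \mapsto \widehat\mu$ is linear and injective (restrict to the base) and $\Pi$ is equivariant, the problem reduces to controlling $\Pi_*$ on tower lifts.

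First I would record that $\Pi$ is injective on shift-orbits: if $\Pi(x,i) = \Pi(x',i')$, then $\sigma^\Z(x')$ lies in the $T$-orbit of $\sigma^\Z(x)$, so by hypothesis $x'$ lies in the $T$-orbit of $x$, and one checks that $(x',i')$ then lies in the single $\widehat T$-orbit of $(x,i)$; thus $\Pi$ induces a bijection $\widehat X/\widehat T \to \sigma(X)/T$. Writing the ergodic decomposition $\mu_i = \int_{E} \lambda\, d\tau_i(\lambda)$ over the ergodic probability measures $E$ on $X$, linearity and continuity of $\sigma_X^\cal M$ (Proposition \ref{1.3}) give $\mu_i^\sigma = \int_E \lambda^\sigma\, d\tau_i(\lambda)$. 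For ergodic $\lambda$ the tower lift $\widehat\lambda$ is again ergodic, hence so is its pushforward $\lambda^\sigma = \Pi_*\widehat\lambda$; with total mass $c(\lambda) = \sum_{a_k} |\sigma(a_k)|\,\lambda([a_k]) > 0$ (Proposition \ref{non-proba-image}), the normalisation $\Phi(\lambda) := \lambda^\sigma/c(\lambda)$ is an ergodic \emph{probability} measure on $\sigma(X)$. Thus $\Phi$ carries extreme points to extreme points, and it remains only to prove that $\Phi$ is injective: uniqueness of the ergodic decomposition will then force $\Phi_*(c\cdot\tau_1) = \Phi_*(c\cdot\tau_2)$, and since $\Phi$ is an injective Borel map of standard Borel spaces (so $\Phi_*$ is injective, by Lusin--Souslin) and $c>0$, this yields $\tau_1=\tau_2$, i.e. $\mu_1=\mu_2$.

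To prove $\Phi$ injective I would split by type. If $\lambda$ is atomic, uniform on a periodic $T$-orbit $O_X(x)$, then $\Phi(\lambda)$ is the uniform measure on the periodic orbit $O_Y(\sigma^\Z(x))$, and since injectivity on orbits sends distinct periodic $X$-orbits to distinct periodic $Y$-orbits, $\Phi$ is injective here. If $\lambda$ is non-atomic, a $\lambda$-generic point $x$ is aperiodic and, by Proposition \ref{0.formula} together with the Birkhoff ergodic theorem (comparing occurrence counts of $w'$ in $\sigma^\Z(x)$ with those of the finitely many relevant $w$ in $x$), the point $y=\sigma^\Z(x)$ is generic for $\Phi(\lambda)$. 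The hypothesis is in particular recognizability for aperiodic points, so on the aperiodic part of $\sigma(X)$ there is a measurable desubstitution $\psi$ inverting $\sigma^\Z$ up to the shift; pushing $\Phi(\lambda)$ forward by $\psi$ recovers $\lambda$, giving injectivity on non-atomic ergodic measures.

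The point that genuinely requires the full hypothesis — and the main obstacle — is ruling out ``type leakage'': a non-atomic $\lambda$ must not transfer to a measure carried by a periodic orbit (this is precisely why recognizability for aperiodic points alone does \emph{not} suffice). Here I would argue from the support. By Proposition \ref{1.3}(5) the support of $\lambda^\sigma$ is $\sigma(\supp\lambda)$, and a non-atomic ergodic $\lambda$ has perfect, hence uncountable, support, containing uncountably many distinct $T$-orbits. If $\sigma^\Z(\supp\lambda)$ were finite, two distinct $X$-orbits in $\supp\lambda$ would share an image $Y$-orbit, contradicting injectivity on orbits; hence $\lambda^\sigma$ is carried by an infinite set and, being ergodic, is non-atomic. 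This dichotomy — atomic $\leftrightarrow$ atomic and non-atomic $\leftrightarrow$ non-atomic — is exactly what lets the two cases combine into injectivity of $\Phi$, and thence of $\sigma_X^\cal M$.
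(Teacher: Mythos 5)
Your proof is correct in substance, but it takes a genuinely different route from the paper's. The first step is actually shared: your tower $(\widehat X, \widehat T)$ with the coding $\Pi(x,i)=T^i\sigma^\Z(x)$ is precisely the paper's canonical decomposition $\sigma = \alpha_\sigma\circ\pi_\sigma$ in dynamical guise (the tower is conjugate to the image subshift of the subdivision morphism $\pi_\sigma$, and $\Pi$ corresponds to $\alpha_\sigma^\Z$), and your identification $\mu^\sigma = \Pi_*\widehat\mu$ is a nice reformulation of Proposition \ref{3.5.5}. After that the arguments diverge. The paper splits an \emph{arbitrary} measure into its periodic and non-atomic parts (Proposition \ref{8n.4.5}) and proves injectivity on non-atomic measures by a self-contained combinatorial argument: the prolongation sets $W_n(w)$ are split into unambiguous words $U_n(w)$ and ambiguous ones $A_n(w)$, and the key Lemma \ref{8n.2} shows that under shift-orbit injectivity the ambiguity locus $A_\infty(w)$ consists only of periodic points, whence $\mu(w)=\lim_n \sigma_*(\mu)(V_n)$ for non-atomic $\mu$. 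You instead pass to the ergodic decomposition, push it through the transfer, and recover a non-atomic ergodic $\lambda$ from $\lambda^\sigma$ via a measurable inverse of $\Pi$ on the aperiodic part --- i.e.\ you invoke recognizability for aperiodic points, which is Proposition \ref{6.3}, whose proof this paper does \emph{not} contain (it is deferred to \cite{BHL2.8-II}); you then need Choquet uniqueness of the ergodic decomposition and Lusin--Souslin to descend from ergodic measures to all of $\cal M(X)$. The trade-off: the paper's proof is elementary and self-contained (no recognizability, no ergodic decomposition, no descriptive set theory), while yours is more conceptual and modular but imports Proposition \ref{6.3} and heavier standard machinery. Note, however, that Proposition \ref{6.3} has a two-line proof inside your own tower bookkeeping: if $\Pi(x,i)=\Pi(x',i')$ is aperiodic, shift-orbit injectivity gives $x'=T^m x$, and then the normalizations $0\le i<|\sigma(x_1)|$, $0\le i'<|\sigma(x'_1)|$ force $m=0$ and $i=i'$; including this would make your route self-contained.

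Two small inaccuracies, both fixable. First, $\psi_*\Phi(\lambda)$ does not literally ``recover $\lambda$'': writing $\psi = p\circ(\Pi|_{\rm aperiodic})^{-1}$ with $p$ the tower projection, one gets $\psi_*(\lambda^\sigma)=p_*\widehat\lambda$, which is $\lambda$ weighted by the positive density $x\mapsto|\sigma(x_1)|$; since this density is explicit, $\lambda$ is still determined and injectivity survives. Second, the recovery step tacitly requires $\widehat\lambda$ to be concentrated on $\Pi^{-1}$ of the \emph{aperiodic} part of $\sigma(X)$, i.e.\ that $\lambda^\sigma$ is non-atomic; this is exactly where your ``type leakage'' paragraph enters, and it is the step that genuinely uses shift-orbit injectivity rather than recognizability for aperiodic points alone (in accordance with Remark \ref{8n.7}). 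Your support-counting argument there is correct and plays the role of the paper's Lemma \ref{8n.1} and Lemma \ref{8n.4}\,(2).
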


To be specific, we'd like to note that the hypothesis ``injectivity on the set of shift-orbits of $X$'' is strictly weaker than ``recognizable in $X$'', and strictly stronger than ``recognizable for aperiodic points in $X$". Also, the converse to the conclusion of Theorem \ref{1n.inj} does not hold in full generality. Furthermore, the two properties ``recognizable for aperiodic points in $X$" and ``injectivity of the induced measure transfer map'' are logically independent. The implications among all of these properties and their refusals are conveniently summarized in Fig. 1, and an exhaustive discussion with proofs and counter-examples is given in Section \ref{sec:5d}.

\medskip

To terminate this introduction, we'd like to list 
some 
incidents where our readers may already have encountered the measure transfer map, perhaps ``disguised'' in a different setting or language:

\begin{example}
\label{1.2}
(1)
Let $S$ be a compact surface with non-empty boundary, and $\tau \subset S$ a train track that fills $S$ and satisfies the usual conditions on its complementary components (see 
for instance \cite{Kap09} for details). Then a proper choice of arcs $\alpha_1, \ldots, \alpha_d$ transverse to $\tau$ gives rise to intervals with the following property: Any oriented geodesic lamination $\Lambda \subset S$, for which we assume that it can be isotoped into an interval-fibered neighborhood $\cal N(\tau)$ of $\tau$ in such a way that $\Lambda$ becomes  transverse to all interval fibers, defines an interval exchange transformation system (IET) on the intervals $\alpha_k$, and thus a subshift $X_\Lambda \subset \cal A^\Z$ for  $\cal A = \{\alpha_1, \ldots, \alpha_d\}$. Any transverse measure $\mu_\Lambda$ on $\Lambda$ defines an invariant measure $\mu$ on $X_\Lambda$.

Assume now that (as shown by Thurston for any ``pseudo-Anosov'' homeomorphism) that some homeomorphism $h: S \to S$, after being properly isotoped, maps $\cal N(\tau)$ in an interval-fiber preserving fashion into $\cal N(\tau)$. Then any transverse measure $\mu_\Lambda$ gives rise to an ``$h$-image transverse measure'' $\mu'_\Lambda$, which, when translated back to the corresponding invariant measure $\mu'$ on 
$X_{h(\Lambda)}$, 
turns out to be precisely the transferred measure $\sigma^\cal M(\mu)$, where $\sigma$ is the morphism on $\cal A^*$ 
induced by $h$.

In the most frequently considered pseudo-Anosov case the 
$h$-invariant 
lamination $\Lambda$ as well as the 
corresponding 
subshift $X_\Lambda$ turn out to be both, minimal and uniquely ergodic, so that we have $\sigma^\cal M(\mu) = \lambda\, \mu\,$, where $\lambda > 1$ is the celebrated ``stretching factor'' (= the Perron-Frobenious eigenvalue of $M(\sigma)$) for $h$.

\smallskip
\noindent
(2)
Any word $w \in \cal A^*$ (or rather, its conjugacy class in the free group $F(\cal A)$) 
defines a finite subshift $X_w$ that consists of 
the sequence $w^{\pm \infty} = \ldots w w w \ldots \,\,$ and its finitely many shift translates. 
To $w$ 
there 
is canonically associated a {\em characteristic measure} $\mu_w \in \cal M(\cal A^\Z)$ with support equal to $X_w$ and total measure $\mu_w(X_w) = \mu_w(\cal A^\Z) = |w|$ (see (\ref{eq:charac-m}) for more details).

For any non-erasing morphism $\sigma: \cal A^* \to \cal B^*$ the transferred measure of any characteristic measure is again a characteristic measure, given by
$$\sigma^\cal M(\mu_w) = \mu_{\sigma(w)} \, .$$

\smallskip
\noindent
(3)
For the special case that $\sigma: \cal A^* \to \cal B^*$ extends to a free group automorphism $\phi_\sigma: F(\cal A) \to F(\cal B)$, one can use the well established 1-1 relationship between invariant measures on $\cal A^\Z$ on one hand and currents on $F(\cal A)$ on the other, as well as the similarly well established action of $\Aut(F(\cal A))$ on the current space $\Curr(F(\cal A)) \cong \cal M(\cal A^\Z)$ (see \cite{Ka2} for details). Denoting by $\mu$ also the current on $F(\cal A)$ defined by the invariant measure $\mu$, one has
$$\phi_\sigma(\mu) = \sigma^\cal M(\mu) \, .$$
\end{example}

\bigskip
\noindent
{\em Acknowledgements:}
We would like to thank Fabien Durand and Samuel Petite 
for encouraging remarks and interesting comments,
and the two referees, who have sent us very stimulating questions and useful suggestions.

%%%%%%%%%%%%%%%%%%%%%%%%%%%%
\section{Notation and conventions}
\label{sec:2}

%%%%%%
\subsection{Standard terminology and well known facts}
\label{sec:2.1}

${}^{}$

\smallskip

Throughout this paper we denote by $\cal A, \cal B, \cal C, \ldots$ finite 
non-empty 
sets, called {\em alphabets}. For 
any such alphabet, say $\cal A$, we denote by $\cal A^*$ the free monoid over the set $\cal A$, given by all finite words $w = x_1 x_2 \ldots x_n$ with $x_i \in \cal A$, and equipped with the multiplication defined by concatenation. We denote by $|w| := n$ the {\em length} of any such word. 
The empty word $\epsilon \in \cal A^*$ is defined by $|\epsilon| = 0$; it is the unit element with respect to the multiplication in $\cal A^*$.

The elements $a_i \in \cal A$ are called the {\em letters}; they constitute a set of generators of $\cal A^*$ 
and moreover a basis of the associated free group $F(\cal A)$.
Any 
subset $\cal L \subset \cal A^*$ 
(often assumed to be infinite) 
is called a {\em language} over $\cal A$. 

\smallskip

A {\em monoid morphism} $\sigma: \cal A^* \to \cal B^*$ is well defined by knowing the image 
word $\sigma(a_i) \in \cal B^*$ for each of the letters $a_i \in \cal A$. Conversely, each choice of such image words defines a monoid morphism $\sigma$ as above. The monoid morphism $\sigma$ is {\em non-erasing} if $|\sigma(a_i)|\geq 1$ for each of the letters $a_i \in \cal A$. In this paper we will only consider non-erasing morphisms. Note that any non-erasing morphism $\sigma: \cal A^* \to \cal B^*$ is ``finite-to-one'', i.e. for any element $w \in \cal B^*$ the preimage set $\sigma^{-1}(w)$ is finite.

Every monoid morphism $\sigma: \cal A^* \to \cal B^*$ defines an {\em incidence matrix}
\begin{equation}
\label{incidence-m}
M(\sigma) = (|\sigma(a_j)|_{b_i})_{b_i \in \cal B, a_j \in \cal A}
\end{equation}
where $|\sigma(a_j)|_{b_i}$ denotes the number of occurrences of the letter $b_i \in \cal B$ in the $\sigma$-image of any $a_j \in \cal A$. 
One easily verifies the formula $M(\sigma) = M(\sigma_2) \cdot M(\sigma_1)$ for any composition of monoid morphisms $\sigma = \sigma_2 \circ \sigma_1$.

\smallskip

To any alphabet $\cal A$ there is canonically associated the shift space 
$\cal A^\Z$. Its elements are written as biinfinite words ${\bf x} = \ldots x_{i-1} x_i x_{i+1} \ldots$ with $x_i \in \cal A$. The set $\cal L({\bf x}) \subset \cal A^*$ of all finite subwords 
(called {\em factors}) 
${\bf x}_{[k, \ell]} := x_{k} x_{k+1} \ldots x_\ell$ is the {\em language associated to ${\bf x}$}. The one-sided infinite {\em positive half-word} $x_1 x_2 \ldots $ of $\bf x$ is denoted by ${\bf x}_{[1, \infty)}$.
Similarly, we denote the complementary infinite half-word $\ldots x_{-1} x_0$ of $\bf x$ by ${\bf x}_{(-\infty, 0]}$.

The shift space $\cal A^\Z$ 
is canonically equipped with 
the {\em shift operator} $T: \cal A^\Z \to \cal A^\Z$ which maps the word ${\bf x} = \ldots x_{i-1} x_i x_{i+1} \ldots$ to the word ${\bf y} = \ldots y_{j-1} y_j y_{j+1} \ldots$ given by $y_k = x_{k+1}$ for all indices $k \in \Z$.
Similarly, 
$\cal A^\Z$ is naturally equipped with the product topology 
(with respect to the discrete topology on $\cal A$), which makes 
$\cal A^\Z$ 
into a compact space, indeed a Cantor set
 (unless $\card(\cal A) = 1$). 
For any $w \in \cal A^*$ the {\em cylinder} $[w] \subset \cal A^\Z$ is open and closed (and thus compact); it consists of all biinfinite words ${\bf x} = \ldots x_{i-1} x_i x_{i+1} \ldots$ with ${\bf x}_{[1, |w|]} = w$. The set of all cylinders and their shift-translates constitutes a basis for the topology on $\cal A^\Z$. The shift operator $T$ acts as homeomorphism on $\cal A^\Z$.

\smallskip

A subset $X \subset \cal A^\Z$ is called a {\em subshift} if $X$ is non-empty, closed, and 
if it satisfies $T(X) = X$. 
To any 
infinite 
language $\cal L \subset \cal A^*$ there is canonically associated the subshift $X(\cal L)$ {\em generated by $\cal L$}: It is defined through
$${\bf x} \in X(\cal L) \quad \Longleftrightarrow \quad \cal L({\bf x}) \subset \cal L^f \, ,$$
where $\cal L^f$ denotes the {\em factorial closure} of $\cal L$, which is the language obtained from $\cal L$ by adding in all {\em factors} (= subwords) of any word $w \in \cal L$.
Conversely, every subshift $X \subset \cal A^\Z$ determines an {\em associated subshift language} $\cal L(X) := 
\bigcup\{\cal L({\bf x})\mid {\bf x} \in X\}$, which is infinite and equal to its factorial closure, so that one has $X = X(\cal L(X))$.

A subshift $X$ is {\em minimal} if for any element ${\bf x} \in X$ the set $X$ is equal to 
the closure of the {\em shift-orbit}
$$\cal O({\bf x}) := \{T^n({\bf x}) \mid n \in \Z\}\, .$$
Important particular examples are minimal subshifts that are finite: They consist of a single orbit $\cal O({\bf w})$ that is given by the finitely many shift-translates of a periodic word ${\bf w} 
= \ldots w w w \ldots$. 
To be specific, we fix the indexing of such a periodic word by the requirement that $w$ is a prefix of the positive half-word ${\bf w}_{[1, \infty)}$; in this case ${\bf w}$ is denoted by $w^{\pm\infty}$, 
and we write ${\bf w}_{[1, \infty)} =: w^{+ \infty}$ and ${\bf w}_{(-\infty, 0]}= :w^{- \infty}$.

The space $\Sigma(\cal A)$ of all subshifts $X \subset \cal A^\Z$ is naturally equipped with the partial order given by the inclusion; the minimal elements with respect to this partial order are precisely the minimal subshifts. The shift space $\cal A^\Z$ itself is the only maximal element with respect to this partial order; it is often also called the {\em full shift over $\cal A$}. Similarly, a subshift $X \subset \cal A^\Z$ is sometimes called {\em a subshift over $\cal A$}.

The space $\Sigma(\cal A)$ is also equipped with a natural topology, inherited from the 
canonical embedding $\Sigma(\cal A) \subset \cal P(\cal A^*)$ which is defined by the above bijection between subshifts and their associated languages.
Since the topology of the shift space doesn't play a role in this paper, we 
will not give details here and refer the reader instead to 
\cite{PS}. Just for ``general interest'' we note that the 
subset of $\Sigma(\cal A)$, which consists of all subshifts that are a union of finitely many shift-orbits, is dense in $\Sigma(\cal A)$. More information about the topology on $\Sigma(\cal A)$ can be found in \cite{PS}, where in particular it is shown that $\Sigma(\cal A)$ is a 
Pe\l{}czy\'nski 
space.

\medskip

An {\em invariant measure} $\mu$ on a subshift $X \subset \cal A^\Z$ is a finite Borel measure on $\cal A^\Z$ with support in $X$ that is invariant under $T$, i.e.  
$\mu(T^{-1}(B))=\mu(B)$ for every measurable set $B\subseteq X$. 
Any invariant measure $\mu$ defines a {\em weight function}
$$\omega_\mu: \cal A^* \to \R_{\geq 0}, \,\, w \mapsto \mu([w]) \, ,$$
by which we mean any function $\omega: \cal A^* \to \R_{\geq 0}$ that satisfies the {\em Kirchhoff equalities}
\begin{equation}
\label{Kirchhoff}
\omega(w) = \sum_{a_i \in \cal A} \omega(a_i w) = \sum_{a_i \in \cal A} \omega(w a_i)
\end{equation}
for all $w \in \cal A^*$. Conversely, it is well known (see 
\cite{Barreira}, \cite{Walters})
that every weight function $\omega: \cal A^* \to \R_{\geq 0}$ defines an invariant measure $\mu_\omega$ via $\mu_\omega([w]) := \omega(w)$ for all $w \in \cal A^*$. This gives 
$\mu_{\omega_\mu} = \mu$ and $\omega_{\mu_\omega} = \omega$, and hence a bijective relation between invariant measures and weight functions. This bijection respects the addition and the multiplication with scalars $\lambda \in \R_{\geq 0}\, $, both of which are naturally defined for invariant measures as well as for weight functions. We thus simplify 
our notation by writing 
\begin{equation}
\label{eq-simplify}
\mu(w) := \mu([w]) = \omega_\mu(w) \, .
\end{equation}

\smallskip

An invariant measure $\mu$ is 
{\em ergodic} if $\mu$ can not be written in any non-trivial way as sum $\mu_1 + \mu_2$ of two invariant measures $\mu_1$ and $\mu_2$ (i.e. $\mu_1 \neq 0 \neq \mu_2$ and $\mu_1 \neq \lambda \mu_2$ for any $\lambda \in \R_{> 0}$). An invariant measure is called a {\em probability measure} if $\mu(X) = 1$, which is equivalent to $\underset{a_i \in \cal A}{\sum} \mu([a_i]) = 1$.

We denote by $\cal M(X)$ the set of invariant measures on $X$, and by 
$\cal M_1(X) \subset \cal M(X)$ the subset of probability measures. The set $\cal M(X)$ is naturally equipped with the {\em weak$^*$-topology} (see 
\cite{Barreira}, \cite{Ka2}, \cite{Walters}). 
Using the above bijection to the set of weight functions this topology turns out to be 
equivalent to the topology inherited from the canonical embedding of $\cal M(X)$ into the product space $\R^{\cal A^*}$ given by $\mu \mapsto (\mu(w))_{w \in \cal A^*}$.

It follows that the set $\cal M(X)$ is a convex linear cone which is naturally embedded into the non-negative cone of the infinite dimensional vector space $\R^{\cal A^*}$. The cone $\cal M(X)$  is closed, and  the extremal vectors of $\cal M(X)$ are in 1-1 relation with the ergodic measures on $X$. Furthermore, $\cal M_1(X)$ is compact, and it is the closed convex hull of its extremal points.

It is well known (see 
\cite{Barreira}, \cite{Walters}) 
that for any subshift $X \subset \cal A^\Z$ the set $\cal M(X)$ of invariant measures is not empty. If $\cal M_1(X)$ consists of a single point (which then must be ergodic), then $X$ is called {\em uniquely ergodic}.

\smallskip

An important class of ergodic invariant measures on the full shift $\cal A ^\Z$ 
is given by the {\em characteristic measures} $\mu_w$, for any 
non-empty $w \in \cal A^* \smallsetminus \{\epsilon\}$, 
which are defined as follows: If $w$ is 
{\em not a proper power}, i.e. 
\begin{equation}
\label{eq:prop-pow}
\text{$w \neq w_0^r$ for any $w_0 \in \cal A^*$ and any integer $r \geq 2$,} 
\end{equation}
then $\mu_w$ simply counts for any measurable set $B \subset \cal A^\Z$ the number of intersections of $B$ with the minimal finite subshift $\cal O(w^{\pm \infty})$ associated to $w\,$, 
giving the equality
\begin{equation}
\label{eq:charac-m}
\mu_w(B) \,\, := \,\, \card(B \cap \cal O(w^{\pm\infty})) \, .
\end{equation}
If on the other hand 
$w = {w_0}^r$ for some $w_0 \in \cal A^*$ and some integer $r \geq 2$, where $w_0$ is assumed not to be a proper power, one sets
$$\mu_w \, \, :=\,\,  r \cdot \mu_{w_0} \, .$$
In either case, it follows that $\frac{1}{|w|} \mu_w$ is a probability measure.
The set of weighted characteristic measures $\lambda \, \mu_w$ 
(for any $\lambda > 0$ 
and any 
$w \in \cal A^* \smallsetminus \{\epsilon\}$) is known to be dense in $\cal M(\cal A^\Z)$ (see
\cite{Barreira}, 
\cite{Ka2}, \cite{Walters}).

\medskip

It is well known 
(see \cite{Barreira}, \cite{Walters})
and easy to verify that the support $X_\mu := \supp(\mu)$ of any non-zero 
invariant measure $\mu \in \cal M(\cal A^\Z)$ is a subshift. 
Its 
language $\cal L(X_\mu)$ 
is given by 
\begin{equation}
\label{eq:supp}
w \in \cal L(X_\mu) \quad \Longleftrightarrow \quad \mu(w) > 0
\end{equation}
for any $w \in \cal A^*$.
If $X_\mu$ is uniquely ergodic, then $X_\mu$ is minimal, but the converse is famously wrong (see
\cite{Keane}).
For any characteristic measure $\mu_w$ one has $\supp(\mu_w) = \cal O(w^{\pm\infty})$. 
The support map
\begin{equation}
\label{eq2.2}
\Supp: \cal M(\cal A^\Z)
\smallsetminus \{0\} \to \Sigma(\cal A), \,\, \mu \mapsto X_\mu
\end{equation}
has some nice natural properties: for example, if $\mu_1, \mu_2 \in \cal M(\cal A^\Z)$ and $\lambda_1 > 0\, , \,\, \lambda_2 > 0$ are given, then for $\mu = \lambda_1 \mu_1 + \lambda_2 \mu_2$ one has $X_\mu = X_{\mu_1} \cup X_{\mu_2}$.
Also, every minimal subshift is the support of some measure $\mu$, but (by a variety of reasons) there are non-minimal subshifts that are not in the image of the map $\Supp$. 
An example is given by the subshift that consists of the three orbits $\cal O(a^{\pm \infty}), \cal O(b^{\pm \infty})$ and $\cal O(\ldots aaa bbb \ldots)$.

Unfortunately, however, there is no 
topology on $\Sigma(\cal A)$ which is at the same time Hausdorff and for which the support map is continuous: For any $a_1, a_2 \in \cal A$ and $\mu_n := \frac{1}{n} \mu_{a_1} + \mu_{a_2}$ for any $n \in \N$ we have $\supp(\mu_n) = \cal O(a_1^{\pm \infty}) \cup \cal O(a_2^{\pm\infty})$ but $\lim \mu_n = \mu_{a_2}$ and thus $\supp(\lim \mu_n) = \cal O(a_2^{\pm\infty})$.

An invariant measure $\mu$ on some subshift $X \subset \cal A^\Z$ extends canonically to an invariant measure on all of $\cal A^\Z$. For the corresponding weight functions this extension is obtained by simply declaring $\omega_\mu(w) = 0$ for any $w \notin \cal L(X)$. We will notationally not distinguish between $\mu$ and its canonical extension, or conversely, between $\mu$ and the restriction of $\mu$ to its support. In particular, for any subshift $X \subset \cal A^\Z$ we understand $\cal M(X)$ as canonical subset of $\cal M(\cal A^\Z)$.

%%%%%%
\subsection{``Not so standard'' basic facts and terminology}
\label{sec:2.2}

${}^{}$
\smallskip

Let $\sigma: \cal A^* \to \cal B^*$ a non-erasing monoid morphism. Then there is a canonically induced map
\begin{equation}
\label{eq-sigma-Z}
\sigma^\Z: \cal A^\Z \to \cal B^\Z
\end{equation}
that maps a biinfinite word ${\bf x} = \ldots x_{i-1} x_i x_{i+1} \ldots  \in \cal A^\Z$ to the biinfinite word ${\bf y} = \ldots y_{j-1} y_j y_{j+1} \ldots  \in \cal B^\Z$ obtained from concatenating the $\sigma(x_i)$ in the obvious way, starting with the convention 
$$\sigma(x_1) = y_1 \ldots y_{|\sigma(x_1)|}\, .$$
This map $\sigma^\Z$ will in general not inherit any of the properties of $\sigma$ (for instance, 
$\sigma^\Z$ 
is almost never surjective and almost never finite-to-one). However, it satisfies $\sigma^\Z(T({\bf x})) = T^k({\bf y})$ for suitable $k \geq 0$ depending on 
${\bf x}$, 
so that the shift-orbit $\cal O({\bf x})$ has a well defined image shift-orbit $\cal O({\bf y})$. Hence $\sigma^\Z$ induces a map
$$\sigma^T: \cal A^\Z / \langle T \rangle \to \cal B^\Z / \langle T \rangle, \,\, \cal O({\bf x}) \mapsto \cal O(\sigma^\Z({\bf x}))$$
on the associated ``leaf spaces'' (i.e. the set of shift-orbits), which turns out to be a lot more meaningful than the map $\sigma^\Z$ itself.

\begin{rem}
\label{composition}
It follows straight from the definitions that both induced maps $\sigma^\Z$ and $\sigma^T$ behave functorially: in particular, for any two non-erasing monoid morphisms $\sigma_1: \cal A^* \to \cal B^*$ and $\sigma_2: \cal B^* \to \cal C^*$ we have
$$(\sigma_2 \circ \sigma_1)^\Z = \sigma_2^\Z \circ \sigma_1^\Z \quad \text{and} \quad (\sigma_2 \circ \sigma_1)^T = \sigma_2^T \circ \sigma_1^T \, .$$
\end{rem}

\medskip

We now come to 
a third 
map induced by any non-erasing monoid morphism $\sigma: \cal A^* \to \cal B^*$, namely the map
\begin{equation}
\label{most-important}
\sigma^\Sigma: \Sigma(\cal A) \to \Sigma(\cal B) \, , \,\, X \mapsto 
\sigma^\Sigma(X)
\end{equation}
from the space $\Sigma(\cal A)$ of subshifts $X \subset \cal A^\Z$ to the space $\Sigma(\cal B)$ of subshifts  $Y \subset \cal B^\Z$. 
We know of 
three natural ways to define the image $\sigma^\Sigma(X) \subset \cal B^\Z$ under this map, for any given subshift $X \subset \cal A^\Z$, listed below as follows.
Notice
that here the assumption on $\sigma$ to be non-erasing is necessary.

\begin{defn-rem}
\label{image-shift}
If $\sigma$ is non-erasing, 
then the
following three definitions of the {\em image subshift} $Y := \sigma^\Sigma(X)$, for simplicity 
usually 
denoted by $Y = \sigma(X)$, are equivalent:
\begin{enumerate}
\item
$Y$ is the intersection of all subshifts that contain the set $\sigma^\Z(X)$ (which in general is not shift-invariant and hence not a subshift itself).

\item
$Y$ is the closure of the union of all image orbits $\sigma^T(\cal O({\bf x}))$, for any ${\bf x} \in X$. 
In fact 
(see Lemma \ref{closure-not} below), taking the closure in the previous sentence can be omitted.

\item
$Y$ is the subshift generated by the language $\sigma(\cal L(X))$. Thus $Y$ consists of all biinfinite words ${\bf y} \in \cal B^\Z$ with the property that 
every finite factor of $\bf y$ is also a factor of some word in $\sigma(\cal L(X))$.
\end{enumerate}
In particular, the map $\sigma^\Z$ 
restricts/co-restricts to 
a map 
\begin{equation}
\label{eq-shift-map}
\sigma^\Z_X: X \to \sigma^\Z(X) \subset \sigma(X)\, ,
\end{equation}
where 
the inclusion $\sigma^\Z(X) \subset \sigma(X)$ is in general not an equality, due to our convention $\sigma(X) = \sigma^\Sigma(X)$.
\end{defn-rem}

The proof of the equivalence of the statements (1) - (3) above is 
straight forward and hence 
left here to the reader 
(except for the part proved below in Lemma \ref{closure-not}). 
We do however illustrate the terms used above by making them explicit in the following special case:

\begin{example}
\label{Arnaud-wish}
Let $\cal A = \{a, b\}$  and $\cal B = \{c, d\}$, and define $\sigma: \cal A^* \to \cal B^*$ via
$$a \mapsto (cd)^2 = 
cdcd\, ,\,\, b \mapsto (cd)^3 = cdcdcd  \, .$$
We set $X := \{ a^{\pm\infty}, \, b^{\pm \infty}\}$ and obtain:
\begin{itemize}
\item
$\sigma^\Z(X) = \{(cd)^{\pm\infty}\}$
\item
$\sigma^\Sigma(X) = \{(cd)^{\pm\infty}, (dc)^{\pm\infty}\}$
\item
$\sigma^T(\cal O(a^{\pm\infty})) = \sigma^T(\cal O(b^{\pm \infty})) = \cal O((cd)^{\pm \infty}) = \cal O((dc)^{\pm \infty})$ 
\item
$\cal L(X) = \{a^k, b^k \mid k \geq 0\}$
\item
$\sigma(\cal L(X)) = \{(cd)^{k} \mid k \geq 2\}$
\item
$\cal L(\sigma(X)) = \{(cd)^{k}, (dc)^{k}, (cd)^{k}c, (dc)^{k}d \mid k \geq 0\}\, .$
\end{itemize}
\end{example}

The following basic fact 
is used 
below in the proofs of Lemma \ref{5.x.d} 
and of Lemma \ref{8n.1}; 
since we don't know a reference for it, we include here a proof.
Please note that the compactness of $\cal A^\Z$ and the continuity of the map $\sigma^\Z$ yield directly that for any subshift $X$ the set
$\sigma^\Z(X)$ is closed. However, since $\sigma^\Z(X)$ is in general not shift-invariant, this doesn't imply directly that the set $Y$ in the lemma below is closed.

\begin{lem}
\label{closure-not}
(1)
For any 
non-erasing 
monoid morphism $\sigma: \cal A^* \to \cal B^*$ and any subshift $X \subset \cal A^\Z$ the union 
$Y$ of all image orbits $\sigma^T(\cal O({\bf x}))$, for any ${\bf x} \in X$, is a closed subset of $\cal B^\Z$.

\smallskip
\noindent
(2)
In particular, the map $\sigma_X^T: X / \langle T \rangle \to \sigma(X) / \langle T \rangle$ induced by the map $\sigma^T$ is surjective.
\end{lem}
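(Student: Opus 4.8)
The plan is to establish part (1) and then to obtain part (2) for free. For (1) it suffices to show that the set $U := \bigcup_{{\bf x} \in X} \cal O(\sigma^\Z({\bf x}))$ is closed in $\cal B^\Z$. Once this is known, Definition-Remark \ref{image-shift}(2), which defines $\sigma(X)$ as the closure of $U$, gives $\sigma(X) = U$; hence every point of $\sigma(X)$ lies on an image orbit $\sigma^T(\cal O({\bf x})) = \cal O(\sigma^\Z({\bf x}))$ with ${\bf x} \in X$, which says precisely that the induced map $\sigma_X^T$ is onto (it is well defined because each such image orbit is contained in $\sigma(X)$). So the entire content is the closedness of $U$.

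I would first record the combinatorial meaning of membership in $U$: a point ${\bf y} \in \cal B^\Z$ lies in $U$ exactly when it admits a $\sigma$-tiling over $X$, i.e. there is some ${\bf x} = \ldots x_0 x_1 \ldots \in X$ and a biinfinite increasing sequence of cut points splitting $\bf y$ into consecutive blocks equal, in order, to the words $\sigma(x_i)$; indeed ${\bf y} = T^m \sigma^\Z({\bf x})$ carries the block decomposition of $\sigma^\Z({\bf x})$ shifted by $m$, and conversely such a tiling exhibits $\bf y$ as a shift of $\sigma^\Z({\bf x})$. The two features I will exploit are that each block $\sigma(x_i)$ has length at least $1$ (because $\sigma$ is non-erasing) and at most $L := \max\{|\sigma(a_k)| \,:\, a_k \in \cal A\}$ (because $\cal A$ is finite). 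I expect the genuine obstacle to be the control of the cut points in a limit: a priori the tilings of a convergent sequence could be misaligned, with cut points drifting off, so that a limit of preimages need not tile the limit word at all. It is exactly the two-sided bound $1 \leq |\sigma(a_k)| \leq L$ that defeats this, as the argument below shows.

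To prove $U$ closed, take ${\bf y}_n \in U$ with ${\bf y}_n \to {\bf y}$, and for each $n$ fix a $\sigma$-tiling of ${\bf y}_n$ with preimage ${\bf x}_n \in X$. The block meeting position $1$ has length at most $L$, so it begins at a position in $\{2 - L, \ldots, 1\}$; after re-indexing ${\bf x}_n$ by a shift (which preserves membership in $X$) I may assume this block is $\sigma(x^{(n)}_1)$, and after passing to a subsequence I may assume it begins at the same position $1 - r$ for all $n$, where $r \in \{0, \ldots, L-1\}$. By compactness of $\cal A^\Z$ I pass to a further subsequence with ${\bf x}_n \to {\bf x}$, and ${\bf x} \in X$ because $X$ is closed. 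Set ${\bf z} := T^r \sigma^\Z({\bf x}) \in U$, whose ${\bf x}$-tiling likewise begins its block $\sigma(x_1)$ at position $1 - r$; it remains to check ${\bf y} = {\bf z}$. I compare them on an arbitrary window $[-K, K]$: because all block lengths are $\geq 1$, only blocks with index in a range $[-K', K']$, with $K'$ depending on $K$ and $L$ alone, can meet $[-K, K]$, so the restriction of the ${\bf x}_n$-tiling to this window is a function of the finitely many letters $x^{(n)}_i$ with $|i| \leq K'$ (and of the fixed starting position $1 - r$). For large $n$ these letters agree with those of $\bf x$, so the tiled word agrees with $\bf z$ on $[-K, K]$, while for large $n$ also ${\bf y}_n$ agrees with $\bf y$ there; hence ${\bf y}$ and ${\bf z}$ coincide on $[-K, K]$. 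As $K$ was arbitrary, ${\bf y} = {\bf z} \in U$, which proves $U$ closed and with it the Lemma.
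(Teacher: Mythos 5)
Your proof is correct and follows essentially the same route as the paper's: normalize the shift offset to lie in a bounded range $\{0,\dots,L-1\}$ using that $\sigma$ is non-erasing with bounded block lengths, stabilize it along a subsequence, use compactness of $\cal A^\Z$ (the paper does this via an explicit diagonal extraction of letters) to get a limit preimage ${\bf x}\in X$, and then check that its image matches the limit word. Your window-by-window comparison just makes explicit the final step that the paper dispatches with ``we see directly from the construction,'' so there is nothing substantively different to flag.
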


\begin{proof}
We need to show the following fact:  
\begin{enumerate}
\item[(\#)]
For any integer $n \geq 0$ let ${\bf x}(n) \in X$ be a biinfinite word with image ${\bf y}(n) := \sigma^\Z({\bf x}(n)) \in Y$, 
and assume that for suitable shift exponents $k(n) \in \Z$ the sequence of the biinfinite words $T^{k(n)}({\bf y}(n))$ converges to some ${\bf y} \in Y$. Then there exists a biinfinite word ${\bf x} \in X$ and a subsequence $({\bf x}(n_m))_{m \in \N}$ such that for a suitable set of integer exponents $\ell_m \in \Z$ one has
$${\bf x} = \lim_{m \to \infty}  T^{\ell_m}({\bf x}(n_m))$$
as well as $\sigma^\Z({\bf x}) = T^k({\bf y})$ for some $k \in \Z$.
\end{enumerate}
In order to prove (\#) we first observe that without loss of generality we can replace any ${\bf x}(n)$ by some shift-translate and thus achieve that the set of exponents $k(n)$ is bounded, and indeed, that $0 \leq k(n) < 
|\sigma(x_0)(n)|$, 
where $x_0(n) \in \cal A$ is the 0-th letter of the biinfinite word ${\bf x}(n) = \ldots x_{-1}(n) x_0(n) x_1(n) \ldots $ for any $n \in \N$.

Hence, 
by extracting a subsequence of the ${\bf x}(n)$, we can 
achieve that the letter $x_0 = x_0(n)$ as well as the exponent $k = k(n)$ is independent of $n$. We now use the assumption that the $T^k({\bf y}(n))$ converge, in order to extract 
again a 
subsequence of the previous subsequence of the ${\bf x}(n)$ in order to ensure that the letters $x_{-1}(n)$ and $x_1(n)$ adjacent to $x_0(n)$ on ${\bf x}(n)$ are also independent of $n$. We 
iteratively 
proceed in this manner and extract finally a diagonal subsequence that defines a biinfinite word ${\bf x}$ which is the limit of some subsequence $({\bf x}(n_m))_{m \in \N}$ of the ${\bf x}(n)$. From our construction and the definition of the map $\sigma^\Z$ we see directly that 
$\sigma^\Z({\bf x}) = T^{-k}({\bf y})$.
\end{proof}

\begin{rem}
\label{4.3}
The concept of an image subshift given in Definition-Remark \ref{image-shift} is a very natural one; it is used frequently, in particular in the $S$-adic context. A systematic treatment, however, doesn't seem to be available anywhere.
One derives easily from the above definitions the following properties of the map (see (\ref{most-important})) 
$$\sigma^\Sigma: \Sigma(\cal A) \to \Sigma(\cal B)$$
on the subshift spaces over $\cal A$ and $\cal B$ respectively. We recall that for simplicity we allow ourselves to denote the image of a subshift $X \subset \cal A^\Z$ by $\sigma(X)$ rather than by $\sigma^\Sigma(X)$.
\begin{enumerate}
\item
The map $\sigma^\Sigma$ is functorial. In particular, for any second morphism $\sigma': \cal B^* \to \cal C^*$ one has
$$\sigma'(\sigma(X)) = \sigma' \circ \sigma(X) \, .$$
\item
The map $\sigma^\Sigma$ respects the inclusion: For any two subshifts $X' \subset X$ in $\Sigma(\cal A)$ one has $\sigma(X') \subset \sigma(X)$.
For any third subshift $X''$ in $\Sigma(\cal A)$ one has $\sigma(X \cap X'') \subset \sigma(X) \cap \sigma(X'')$ and $\sigma(X \cup X'') = \sigma(X) \cup \sigma(X'')$; the analogous statements hold for infinite intersections and for the closure of infinite unions.

\item
The map $\sigma^\Sigma$ respects the subshift closure: If $\cal L \subset \cal A^*$ is an infinite set, then the subshift $X(\cal L) \in \Sigma(\cal A)$ generated by $\cal L$ has as image the subshift generated by $\sigma(\cal L)$: 
$$\sigma(X(\cal L)) = X(\sigma(\cal L))$$
\item
For any subshift $X \in \Sigma(\cal A)$ the map $\sigma^\Sigma$ induces on the subset $\Sigma(X) \subset \Sigma(\cal A)$ of subshifts 
contained in $X$ a map
$$\sigma^\Sigma_X: \Sigma(X) \to \Sigma(\sigma(X)) \, .$$
\item
For any subshift $Y \subset \cal B^\Z$ the preimage $(\sigma^\Z)^{-1}(Y) \subset \cal A^\Z$ is 
either the empty set, or else it is a subshift over $\cal A$.
Alternatively this subshift is obtained as union of all subshifts $X_i$ with $\sigma^\Sigma(X_i) \subset Y$. For simplicity we denote this subshift by $\sigma^{-1}(Y) \subset \cal A^\Z$.
\item
By 
considering for any subshifts $X \subset \cal A^\Z$ and $Y \subset \sigma(X)$ the subshift $X \cap \sigma^{-1}(Y)$ we observe that the above map $\sigma^\Sigma_X: \Sigma(X) \to \Sigma(\sigma(X))$ is surjective. It also preserves the partial order given by the inclusion of subshifts.
\item
In particular, if $X$ is minimal, then so is $\sigma(X)$.
\item
The 
map $\sigma^\Sigma$ is continuous with respect to the canonical topology on the 
subshift spaces.
\end{enumerate}
\end{rem}

\begin{rem}
\label{2.5d}
Although not central to the topics evoked in this paper, for completeness we would like to state here how a non-erasing morphism $\sigma: \cal A^* \to \cal B^*$ acts on the complexity and the topological entropy of a subshift. Recall that for any subshift $X \subset \cal A^\Z$ the {\em 
complexity} (also called {\em word complexity}) is given by the function $p_X: \N \to \N$, 
defined via 
$p_X(n) = \card\{w \in \cal L(X) \mid |w| = n\}$. The {\em topological entropy} of $X$ is defined as $h_X = \underset{n \to \infty}{\lim}\frac{\log\, p_X(n)}{n}$. A fairly standard exercise 
(see for instance \cite{Lu22}, Lemma 2.1, or \cite{HL}, 
Lemma 2.9 and Proposition 2.11)
shows, for $Y := \sigma(X)\, $:
\begin{enumerate}
\item
Setting 
$
||\sigma || 
:= \max\{|\sigma(a_k) \mid a_k \in \cal A\}$ one has
$$p_Y(n) \,\, \leq \,\,  
||\sigma||\cdot p_X(n) \, .$$
\item
As direct consequence one derives
$$h_Y \,\, \leq \,\, h_X \, .$$
\end{enumerate}
\end{rem}

\smallskip

%%%%%%%%%%%%%%%%%
\subsection{About injectivity}
\label{sec:2.3}

${}^{}$

Injectivity of monoid morphisms $\sigma$ and of their induced maps $\sigma^\Z, \sigma^T$, and $\sigma^\Sigma$ is an important and often tricky issue. 
We start out in sub-Subsection \ref{sec:2.3.1} to list several problems and give examples where these often undesired phenomena do occur. In sub-Subsection \ref{sec:2.3.2}, which can be read independently, we will then define the subtle notion of ``shift-period preservation'', 
which will be used in Sections \ref{sec:5d} and \ref{sec:6d} below as well as in \cite{BHL2.8-II}.

\subsubsection{Typical injectivity problems}
\label{sec:2.3.1}

${}^{}$

\smallskip

For starters, we have the following well known phenomenon:

\begin{rem}
\label{2.5d}
There exist non-erasing morphisms $\sigma: \cal A^* \to \cal B^*$ which are injective, while the associated free group homomorphism $F(\cal A) \to F(\cal B)$ is not injective. An example 
is given by
$$\sigma: \{a, b, c\}^* \to \{d, e\}^*\, ,\,\, a \mapsto d d e, \,\, b \mapsto d e e, \,\, c \mapsto d e d e \,$$
which maps $c (a c^{-1} b)^{-2}$ to the neutral element $1 \in F(\{d, e\})$.
\end{rem}

A second, also well known disturbance comes from the fact that injective free group morphisms need not induce injective maps on the set of conjugacy classes. This happens for example 
for the morphism induced by the quotient map, 
if two boundary curves of a surface (with at least one more boundary component to ensure free fundamental groups) are glued together. In a classical monoid morphism situation we observe that for the ``Thue-Morse 
morphism''
\begin{equation}
\label{eq2.7d}
\sigma_\text{TM}: \{a, b\}^* \to \{c, d\}^* \, , \,\,\, a \mapsto cd, \,\, b \mapsto dc
\end{equation}
the images $\sigma_\text{TM}(a)$ and $\sigma_\text{TM}(b)$ are conjugate in $F(\{c, d\})
$, 
while $\sigma_\text{TM}$ is injective on all of $F(\{a, b\})$. 
This yields:

\begin{rem}
\label{2.6d}
There exist non-erasing monoid morphisms $\sigma$ which are injective, but for which the induced map $\sigma^T$ is not injective. For example, for the Thue-Morse morphism one has $\cal O(\sigma_\text{TM}(a^{\pm \infty}))= \cal O(\sigma_\text{TM}(b^{\pm \infty}))$.
\end{rem}

Two further, more subtle ``injectivity disturbances'' can be observed by the following two examples, 
despite the fact that in both cases the induced map on the shift-orbits is injective. Indeed, in both cases we consider a subshift which consists of a single periodic shift-orbit: in the first case the map $\sigma^\Z$ restricted to this orbit is not injective, while in the second case it is, but a minimal period in the preimage orbit is not mapped to a minimal period in the image.

\begin{rem}
\label{2.7d}
(1)
The morphism $\sigma_1: \{a, b\}^* \to \{c\}^*\, , \,\, \sigma_1(a) = \sigma_1(b) = c$ maps the orbit $\cal O((ab)^{\pm \infty})$ to $\cal O( c^{\pm \infty} )$, but the restriction of $\sigma_1^\Z$ to $\cal O( (ab)^{\pm \infty} )$ is not injective.

\smallskip
\noindent
(2)
The morphism $\sigma_2: \{a\}^* \to \{b\}^*\, , \,\, \sigma_2(a) = b^2$ maps the orbit $\cal O( a^{\pm \infty} )$ 
injectively 
to $\cal O( b^{\pm \infty} )$, but the ``shift-period'' is not preserved: 
For both, $a^{\pm \infty}$ and $b^{\pm \infty}$ the minimal shift-period has length 1, but $\sigma_2$ maps the shift-period $a$ of $a^{\pm \infty}$ to a word of length 2, which is hence not the minimal shift-period of $b^{\pm \infty}$.
\end{rem}

\subsubsection{Shift-period preservation}
\label{sec:2.3.2}

${}^{}$

\smallskip

For any periodic word ${\bf x} \in \cal A^\Z$ we define the {\em shift-period exponent} of ${\bf x}$ 
to be the smallest integer $k \geq 1$ such that $T^k({\bf x}) = {\bf x}$. If ${\bf x} = w^{\pm \infty}$ for some $w \in \cal A^*$, then $k$ divides $|w|$. If $w$ can not be written as proper power (see (\ref{eq:prop-pow})), then the shift-period exponent of $\bf x$ is given by $k = |w|$. In this case any cyclic permutation of $w$ will be called a {\em shift-period} of the periodic word ${\bf x}$.

\begin{defn}
\label{period-preserving}
A morphism $\sigma: \cal A^* \to \cal B^*$ is said to {\em preserve the shift-period} 
of some biinfinite periodic word ${\bf x} \in\cal A^\Z$ if any shift-period $w \in \cal A^*$ of ${\bf x}$ is mapped by $\sigma$ to a shift-period of the image word $\sigma^\Z({\bf x}) \in \cal B^\Z$.

In other words, if ${\bf x} = w^{\pm \infty}$ and $\sigma(w)$ is a proper power, then so is $w$
(or else $\sigma$ doesn't preserve the shift-period of ${\bf x}$).

\end{defn}

\begin{rem}
\label{2.9d}
In the special case 
where $|\sigma(a_i)| = 1$ for all $a_i  \in \cal A$ we observe that $\sigma$ preserves the shift-period of some periodic biinfinite word ${\bf x}$ if and only if the shift-period exponents of $\bf x$ and of its image $\sigma^\Z({\bf x})$ agree.
\end{rem}

%%%%%%%%%%%%%%%%%%%%%
\section{The measure transfer}
\label{sec:image-measure}

In this section we will carefully define for any non-erasing monoid morphism $\sigma: \cal A^* \to \cal B^*$ and any invariant measure $\mu$ on $\cal A^\Z$ a shift-invariant ``image measure'' 
$\mu^\sigma$ on $\cal B^\Z$. The simplest and most natural way to understand this measure transfer is achieved by decomposing the given morphism $\sigma$ in a canonical way into two morphisms of very elementary type. We start our detailed presentation by considering first each of these two elementary morphism types separately.

\subsection{Subdivision morphisms}
\label{sec:subdivision-morphs}

${}{}$
\smallskip

Let $\cal A$ be a finite alphabet, and let $\ell: \cal A \to \Z_{\geq 1}$ be any map, called {\em subdivision length function}. We now define a new {\em subdivision alphabet} $\cal A_{\ell}$ which consists of letters $a_i(k)$ for any $a_i \in \cal A$ and any $k \in \{1, \ldots , \ell(a_i)\}$. We then define the associated {\em subdivision morphism} 
given by
\begin{equation}
\label{eq3.1}
\pi_{\ell}: \cal A^* \to \cal A_{\ell}^*\, , \,\, a_i \mapsto a_i(1) \ldots a_i({\ell(a_i))} \, .
\end{equation}

\begin{rem}
\label{3.1}
(1)
The name and the intuition here comes from picturing $\cal A$ as edge labels of an oriented ``rose'' 
$R_\cal A$, i.e. a 1-vertex graph with $\card (A)$ 
oriented edges. Then any edge with label $a_i$ is subdivided by introducing $\ell(a_i) - 1$ new vertices in its interior, and by labeling the obtained new edges (in the order given by the orientation) by $a_i(1), a_i(2), \ldots, a_i(\ell(a_i))$. Then any edge path $\gamma(w)$ in $R_\cal A$, which reads off a word $w \in \cal A^*$, will after the subdivision read off the word $\pi_{\ell}(w)$.

\smallskip
\noindent
(2)
The natural ``geometrization'' of the {\em subdivision monoid} $\cal A_{\ell}^*$ as rose $R_{\cal A_{\ell}}$ is however not quite the above subdivision of the rose $R_\cal A$, but is obtained from the latter by identifying all subdivision vertices into a single vertex. This is reflected by the fact that the subdivision morphism $\pi_{\ell}$ defined above is 
not surjective (except in the trivial case where all $\ell(a_i) = 1$ so that $\pi_\ell$ is a bijection).
Its image generates a subshift of finite type in $\cal A_{\ell}^\Z$.
\end{rem}

From the above definitions we deduce directly the following:

\begin{lem}
\label{3.2}
Let $\pi_{\ell}: \cal A^* \to \cal A_{\ell}^*$ be a subdivision morphism as in (\ref{eq3.1}). Then the following holds:
\begin{enumerate}
\item
The monoid morphism $\pi_{\ell}$ is injective.
\item
The induced map $\pi_{\ell}^\Z: \cal A^\Z \to \cal A_{\ell}^\Z$ is injective.
\item
The map $\pi_{\ell}^T$ induced on shift-orbits is injective.
\item
The morphism $\pi_\ell$ preserves the shift-period of any biinfinite periodic word ${\bf x} \in \cal A^\Z$.
\item
The map $\pi_{\ell}^\Sigma$ induced on subshifts over $\cal A$ is injective.
\end{enumerate}
\end{lem}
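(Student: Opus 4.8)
The plan is to exploit the one structural feature that makes a subdivision morphism rigid: among the letters of $\cal A_\ell$, precisely those of the form $a_i(1)$ occur as the first letter of an image block $\pi_\ell(a_i) = a_i(1) \ldots a_i(\ell(a_i))$, and each such letter occurs \emph{only} in that initial position. Consequently, in any word lying in $\pi_\ell(\cal A^*)$, or in any biinfinite word in $\pi_\ell^\Z(\cal A^\Z)$, the positions carrying a letter of type $a_i(1)$ are exactly the block boundaries, and they determine the block decomposition intrinsically. I would package the finite-word version of this observation by introducing the decoding map $\rho: \cal A_\ell^* \to \cal A^*$ which deletes every letter $a_i(k)$ with $k \geq 2$ and sends $a_i(1) \mapsto a_i$; it is a monoid homomorphism (into the free monoid, allowing the empty word as an image) and satisfies $\rho \circ \pi_\ell = \mathrm{id}_{\cal A^*}$. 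This gives (1) immediately.

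For (2) I would use the indexing convention that fixes $y_1$ as the first letter of $\pi_\ell(x_1)$ in ${\bf y} = \pi_\ell^\Z({\bf x})$: then $y_1 = x_1(1)$ is a block start, the block-boundary positions of $\bf y$ are thereby determined, and replacing each maximal block $a_i(1)\ldots a_i(\ell(a_i))$ of $\bf y$ by the single letter $a_i$ recovers $\bf x$ uniquely. Claim (3) reduces to (2) by the same bookkeeping. If $\cal O(\pi_\ell^\Z({\bf x})) = \cal O(\pi_\ell^\Z({\bf x}'))$, write $\pi_\ell^\Z({\bf x}') = T^m(\pi_\ell^\Z({\bf x}))$; both words carry a block start at position $1$, so position $1+m$ of $\pi_\ell^\Z({\bf x})$ carries a letter of type $a_i(1)$, i.e. $m$ is the combined length of a whole number $j$ of consecutive blocks. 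Hence $T^m(\pi_\ell^\Z({\bf x})) = \pi_\ell^\Z(T^j({\bf x}))$, and (2) yields ${\bf x}' = T^j({\bf x})$. This alignment step — that a shift carrying one image word onto another must move block boundaries onto block boundaries — is exactly the point I expect to be the crux, and the one to argue with care; everything else is bookkeeping around it.

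For (4), assume ${\bf x} = w^{\pm\infty}$ with $\pi_\ell(w) = u^r$ for some $r \geq 2$. Since $w \neq \epsilon$, the word $u^r = \pi_\ell(w)$ contains at least one letter of type $a_i(1)$, hence so does each of its $r$ identical factors $u$, so $\rho(u) \neq \epsilon$. Applying the homomorphism $\rho$ gives $w = \rho(u^r) = \rho(u)^r$ with $|\rho(u)| \geq 1$, so $w$ is itself a proper power; this is the contrapositive of the asserted shift-period preservation as formulated in Definition \ref{period-preserving}.

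Finally, (5) follows by combining (2) and (3) with the union-of-orbits description of the image subshift. By Definition-Remark \ref{image-shift}(2) together with Lemma \ref{closure-not}, the image $Y := \pi_\ell^\Sigma(X)$ equals $\bigcup_{{\bf x} \in X} \cal O(\pi_\ell^\Z({\bf x}))$ without any closure being needed, and likewise for a second subshift $X'$. Assuming $\pi_\ell^\Sigma(X) = \pi_\ell^\Sigma(X')$ and taking any ${\bf x} \in X$, the point $\pi_\ell^\Z({\bf x})$ lies in $\pi_\ell^\Sigma(X')$ and hence in some orbit $\cal O(\pi_\ell^\Z({\bf x}'))$ with ${\bf x}' \in X'$; the alignment argument of (3) then forces ${\bf x} = T^j({\bf x}') \in X'$. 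Thus $X \subseteq X'$, and by symmetry $X = X'$.
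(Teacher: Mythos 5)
Your proof is correct and follows essentially the same approach as the paper: the paper's (one-sentence) proof rests on exactly your observation that preimages under $\pi_\ell$ are ``directly visible'' by replacing each block $a_i(1)\ldots a_i(\ell(a_i))$ by $a_i$, which you formalize via the retraction $\rho$ and the block-boundary alignment argument. Your version simply spells out the details (in particular the crux that a shift matching two image words must move block starts to block starts) that the paper leaves to the reader.
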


\begin{proof}
All the listed properties follow directly from the definition of the map $\pi_\ell$, since for any element $w' \in \pi_\ell(\cal A^*)$ the (uniquely determined) preimage $w \in \cal A^*$ is directly visible through replacing every factor $a_i(1) \ldots a_i(\ell (a_i))$ by the letter $a_i\, $.
\end{proof}

\begin{defn}
\label{3.3}
Let $\pi_{\ell}: \cal A^* \to \cal A_{\ell}^*$ be a subdivision morphism as in (\ref{eq3.1}), and let $\mu$ be an invariant measure on $\cal A^\Z$. Consider the weight function $\cal A^* \to \R_{\geq 0}\, ,\,\,  w \mapsto \mu([w])$ associated to $\mu$, which for simplicity 
is also denoted by $\mu$ (see (\ref{eq-simplify})).

Define a function $\mu_{\ell}: \cal A_{\ell}^* \to \R_{\geq 0}$ by
\begin{equation}
\label{eq3.late}
\mu_{\ell}(w) = \mu(\hat w)\, ,
\end{equation}
where $\hat w \in \cal A^*$ is the shortest word in $\cal A^*$ such that $\pi_{\ell}(\hat w)$ contains $w$ as factor. If such $\hat w$ exists, then it is uniquely defined by $w$.
If there is no such word $\hat w$, we set formally $\mu(\hat w) = 0$ and thus 
$\mu_\ell(w) = 0$.

It is shown in Lemma \ref{old-rem} just below that 
the function $\mu_{\ell}$ is a weight function, so that (see Section \ref{sec:2.1}) it defines an invariant measure on $\cal A_{\ell}^\Z$ which will also be denoted by $\mu_{\ell}$. We call $\mu_\ell$ the {\em subdivision measure} defined by $\mu$.
\end{defn}

\begin{lem}
\label{old-rem}
The function $\mu_{\ell}$ inherits from $\mu$ the Kirchhoff equalities (\ref{Kirchhoff}), so that $\mu_{\ell}$ is itself a weight function. 
\end{lem}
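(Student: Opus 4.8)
The plan is to make the word $\hat w$ of Definition \ref{3.3} completely explicit and then to reduce the Kirchhoff equalities (\ref{Kirchhoff}) for $\mu_\ell$ to those for $\mu$. First I would record which nonempty words $w \in \cal A_\ell^*$ occur as a factor of some $\pi_\ell(u)$: by the shape of the blocks $\pi_\ell(a_i) = a_i(1)\ldots a_i(\ell(a_i))$, such a $w$ is uniquely cut into an initial (possibly partial) block, a sequence of complete blocks, and a terminal (possibly partial) block. Writing $a_{i_1}, a_{i_2}, \ldots, a_{i_r} \in \cal A$ for the letters of $\cal A$ whose blocks are met by $w$, in order, the shortest preimage is exactly $\hat w = a_{i_1} a_{i_2} \cdots a_{i_r}$; moreover $w$ begins at some position $a_{i_1}(s)$ of the first block and ends at some position $a_{i_r}(t)$ of the last block, and the triple $(\hat w, s, t)$ determines $w$.

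Next I would verify the right-hand Kirchhoff equality $\mu_\ell(w) = \sum_{c \in \cal A_\ell} \mu_\ell(wc)$ for a nonempty valid $w$, by a case distinction on its terminal letter $a_{i_r}(t)$. If $t < \ell(a_{i_r})$, then in every $\pi_\ell$-image $a_{i_r}(t)$ is forced to be followed by $a_{i_r}(t+1)$; hence $wc$ is valid only for $c = a_{i_r}(t+1)$, all other $c$ contributing $0$, and since $w$ and $w\,a_{i_r}(t+1)$ meet the same blocks we get $\widehat{wc} = \hat w$, so the sum equals $\mu(\hat w) = \mu_\ell(w)$. If $t = \ell(a_{i_r})$, i.e. $w$ ends at a block boundary, then the valid one-letter extensions are exactly the block-starts $c = a_j(1)$, one for each $a_j \in \cal A$, and for each of them $\widehat{w\,a_j(1)} = \hat w\, a_j$, while all other $c$ again contribute $0$. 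Therefore
$$\sum_{c \in \cal A_\ell} \mu_\ell(wc) = \sum_{a_j \in \cal A} \mu(\hat w\, a_j) = \mu(\hat w) = \mu_\ell(w),$$
the middle equality being precisely the right-hand part of (\ref{Kirchhoff}) for $\mu$ applied to $\hat w$. The left-hand Kirchhoff equality follows by the mirror argument on the initial letter $a_{i_1}(s)$: for $s > 1$ the only predecessor is $a_{i_1}(s-1)$ (with unchanged $\hat w$), while for $s = 1$ the predecessors are exactly the block-ends $a_j(\ell(a_j))$ with $\widehat{a_j(\ell(a_j))\,w} = a_j\,\hat w$, the resulting sum collapsing through the left-hand part of (\ref{Kirchhoff}) for $\mu$.

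The step I expect to carry the weight of the argument is the bookkeeping of the first paragraph: one must check that inside a block a subdivision letter admits no continuation other than the next index, and that at a block boundary every letter $a_j(1)$ is an admissible continuation producing exactly $\hat w\, a_j$, so that no spurious valid extension is overlooked and $\hat w$ transforms as claimed; once this is pinned down the displayed identities are automatic. Two degenerate situations remain to be noted: if $w$ is not a factor of any $\pi_\ell(u)$, then neither is any $wc$ or $cw$, so all terms vanish and both equalities read $0 = 0$; and for the empty word the two sums agree and give the total mass $\sum_{a_i \in \cal A} \ell(a_i)\,\mu(a_i)$ of the (stretched) measure $\mu_\ell$, which is the one boundary value that must be read as this normalization.
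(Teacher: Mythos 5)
Your proposal is correct and takes essentially the same route as the paper's own proof: the identical three-way case distinction (words that are not factors of any $\pi_\ell$-image, boundary letter strictly inside a block with unique one-letter extension and unchanged $\hat w$, boundary letter at a block boundary where the valid extensions are exactly the block-ends/starts and $\hat w$ gains one letter), each case collapsing to the corresponding Kirchhoff equality for $\mu$. The only differences are cosmetic or additive: you work out the right-extension identity in detail and get the left one by the mirror argument (the paper does the reverse, invoking symmetry for the other side), and you explicitly flag the empty-word normalization $\mu_\ell(\epsilon) = \sum_{a_i \in \cal A} \ell(a_i)\,\mu(a_i)$ forced by the Kirchhoff equalities --- a degenerate case the paper's proof passes over silently and where the raw formula $\mu_\ell(w) = \mu(\hat w)$ would give the wrong value.
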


\begin{proof}
By symmetry it suffices to prove the first equality of (\ref{Kirchhoff}) for the function $\mu_\ell$. We have to distinguish three cases:

If $w \in \cal A_\ell^*$ is not a factor of any element from $\pi_\ell(\cal A^*)$, then the same is true for $x w$ for any $x \in \cal A_\ell$. In this case both, the left and the right hand side of the desired equality, are equal to 0.

If $w$ is a factor of some element in $\pi_\ell(\cal A^*)$, then we consider the first letter $a_i(k)$ of $w$. If we have 
$k \geq 2$, 
then there is only one letter $x \in \cal A_\ell$ such that $x w$ is a factor of some element from $\pi_\ell(\cal A^*)$, namely $x = a_i(k-1)$. In this case we have $\hat{x w} = \hat w$, so that again the desired equality holds.

Finally, if the first letter of $w$ is equal to some $a_i(1)$, then $x w$ is a factor of some element from $\pi_\ell(\cal A^*)$ precisely if $x = a_j(\ell(j))$ for any of the $a_j \in \cal A$. In this case we have $\hat{x w} = a_j \hat w$, so that the first Kirchhoff equality for $\mu(\hat w)$ gives directly the desired equality for $\mu_\ell(w)$.
\end{proof}

\begin{rem}
\label{3.4}
The subdivision measure defined by a probability measure will in general not be probability: Unless $\ell$ is the constant function with value 1, for the total measure we will have
$$\mu_{\ell}(\cal A_{\ell}^\Z) > 1\, .$$
In fact, one easily derives from 
(\ref{eq3.late}) that
$\mu_{\ell}(a_i(k)) =\mu(a_i)$, 
which yields 
$$\mu_{\ell}(\cal A_{\ell}^\Z) = \sum_{a_i(k) \,\in\, \cal A_\ell} \mu_\ell(a_i(k))
= \sum_{a_i \in \cal A} \ell(a_i) \cdot \mu(a_i) \, .$$
\end{rem}

\smallskip

%%%%%%%%%
\subsection{Letter-to-letter morphisms}
\label{sec:letter-to-letter}

${}{}$
\smallskip

Recall that a monoid morphism $\alpha: \cal A^* \to \cal B^*$ is called {\em letter-to-letter} if for any letter $a_i \in \cal A$ the length of its image is equal to $|\alpha(a_i)| = 1$. In other words: $\alpha$ is induced by a map $\alpha_\cal A: \cal A \to \cal B$ on the alphabets\,\footnote{\, Some authors (see for instance 
\cite{Durand-Perrin}) require in addition that a letter-to-letter morphism must be surjective. All letter-to-letter morphisms occurring in this paper are indeed surjective, but formally we do not need this condition anywhere.}.

It follows directly that both, the letter-to-letter morphism $\alpha: \cal A^* \to \cal B^*$ and the induced map $\alpha^\Z: \cal A^\Z \to \cal B^\Z$, are injective and/or surjective if and only if $\alpha_\cal A$ is injective and/or surjective. Furthermore, $\alpha^\Z$ commutes with the shift maps (both denoted by $T$) on $\cal A^\Z$ and $\cal B^\Z\,$,
thus giving
$$T \circ  \alpha = \alpha \circ T \, .$$
As a consequence, the image $\alpha^\Z(X)$ of any subshift $X \subset \cal A^\Z$ is equal to the image subshift $\alpha^\Sigma(X)$ over $\cal B$. Furthermore, for any invariant measure $\mu$ on $\cal A^\Z$ the classical push-forward measure $\alpha_*(\mu)$ is an invariant measure on $\cal B^\Z$, with support equal to $\alpha^\Z(\supp(\mu))$. In particular, if $\mu$ is a probability measure, then so is $\alpha_*(\mu)$.

According to the defining equation 
for the push-forward measure, 
$\mu_*(S) := \mu(f^{-1}(S))$ for any measurable set $S$ in the range of 
any measurable map $f$, 
we obtain for the weight function associated to $\alpha_*(\mu)$ 
and for any $w \in \cal B^*$
the finite sum decomposition
\begin{equation}
\label{eq3.2}
\alpha_*(\mu)(w) = \sum_{u \,\in \,\alpha^{-1}(w)} \mu(u) \, .
\end{equation}
Note 
here that any $u \in \alpha^{-1}(w)$ has length $|u| = |w|$.

%%%%%%%
\subsection{The induced measure morphisms}
\label{sec:measure-morph}

${}{}$
\smallskip

We now consider an arbitrary non-erasing monoid morphism $\sigma: \cal A^* \to \cal B^*$ (as usual for finite alphabets $\cal A$ and $\cal B$). Then $\sigma$ defines a subdivision length function $\ell_\sigma: \cal A \to \Z_{\geq 1}\,$, given by 
$$\ell_\sigma(a_i) := |\sigma(a_i)|$$
for 
any $a_i \in \cal A$.
This gives 
a subdivision alphabet $\cal A_\sigma := \cal A_{\ell_\sigma}$ as well as a subdivision morphism
$$\pi_\sigma := \pi_{\ell_\sigma} : \cal A^* \to \cal A_{\sigma}^* \, .$$
Furthermore, $\sigma$ defines a letter-to-letter morphism given by
$$\alpha_\sigma: \cal A_{\sigma}^* \to \cal B^* \, , \,\, a_i(k) \mapsto \sigma(a_i)_k \, ,$$
for any $a_i \in \cal A$ and $1 \leq k \leq |\sigma(a_i)|$, where 
$\sigma(a_i)_k \in \cal B$ 
denotes the $k$-th letter of the word $\sigma(a_i) \in \cal B^*$.

\begin{defn-rem}
\label{3.5}
(1) From the above definitions we observe that any non-erasing monoid morphism $\sigma:  \cal A^* \to \cal B^*$ admits a {\em canonical decomposition} 
\begin{equation}
\label{eq:can-dec}
\sigma = \alpha_\sigma \circ \pi_\sigma
\end{equation}
as product of a subdivision morphism $\pi_\sigma$ with a subsequent letter-to-letter morphism $\alpha_\sigma$.

\smallskip
\noindent
(2)
As a consequence, the morphism $\sigma$ induces a canonical {\em measure transfer} $\sigma^\cal M: \cal M(\cal A^\Z) \to \cal M(\cal B^\Z)$ which maps any invariant measure $\mu$ on $\cal A^\Z$ to the measure $(\alpha_\sigma)_*(\mu_{\ell_\sigma})$, 
which for simplicity will sometimes be denoted by $\mu^\sigma$.
For the associated weight function $\sigma^\cal M(\mu)$ on $\cal B^*$ we obtain, for any $w \in \cal B^*$, the formula
\begin{equation}
\label{image-measure}
\sigma^\cal M(\mu)(w)  \,\,\,
[ =:\mu^\sigma(w)] = 
\sum_{u \,\in\, {\alpha_\sigma^{-1}(w)}} \mu_{\ell_\sigma}(u)\, =
\sum_{u \,\in\, {\alpha_\sigma^{-1}(w)}} \mu(\hat u)\, ,
\end{equation}
where $\hat u \in \cal A^*$ is defined above in Definition \ref{3.3}. Recall from Definition \ref{3.3} that if $\hat u$ doesn't exist for some $u \in {\alpha_\sigma^{-1}(w)}$, then one has formally set $\mu(\hat u) = 0$.
\end{defn-rem}

The above canonical decomposition of an arbitrary morphism 
has been used previously; it can be found for instance in Lemma 3.4 of \cite{DDMP}.

%%%%%%%%%
\subsection{Basic properties of the measure transfer map}
\label{sec:3.4}

${}^{}$

\smallskip

In this subsection we want to show some first properties of the measure transfer map $\sigma^\cal M$ defined in the previous subsection. We start out with some basic facts: their proof is an elementary (and not very illuminating) exercise, based on the canonical decomposition (\ref{eq:can-dec}) of any morphism $\sigma$; it is hence not carried through here.

\begin{lem}
\label{3.6}
Let $\sigma: \cal A^* \to \cal B^*$ be a non-erasing monoid morphism. Then the induced measure transfer map $\sigma^\cal M$ has the following properties:
\begin{enumerate}
\item[(a)]
The map $\sigma^\cal M: \cal M(\cal A^\Z) \to \cal M(\cal B^\Z)$ is 
$\R_{\geq 0}$-linear.
\item[(b)]
The map $\sigma^\cal M$ is functorial: For any second non-erasing monoid morphism $\sigma': \cal B^* \to \cal C^*$ one has $(\sigma' \sigma)^\cal M = {\sigma'}^\cal M {\sigma}^\cal M$.
\item[(c)]
The image $\sigma^\cal M(\mu)$ of a probability measure $\mu$ on $\cal A^\Z$ is in general not a probability measure on $\cal B^\Z$. 
\item[(d)]
For any word $w \in \cal A^*$ the characteristic measure $\mu_w$ is mapped by $\sigma^\cal M$ to the characteristic measure $\mu_{\sigma(w)}$.
\item[(e)]
For any word $w \in \cal A^*$ the measures of corresponding cylinders $[w] \subset \cal A^\Z$ and $[\sigma(w)] \subset \cal B^\Z$ satisfy the 
inequality
\begin{equation}
\label{eq3.cylinders}
\mu([w]) \leq \sigma^\cal M(\mu)([\sigma(w)]) \, .
\end{equation}
If $\sigma$ is a subdivision morphism, then (\ref{eq3.cylinders}) becomes an equality, but for a letter-to-letter morphism the inequality (\ref{eq3.cylinders}) will in general be strict.
\qed
\end{enumerate}
\end{lem}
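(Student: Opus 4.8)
The plan is to exploit the canonical decomposition $\sigma = \alpha_\sigma\circ\pi_\sigma$ from \ref{eq:can-dec}, which reduces every assertion to the two elementary morphism types: the subdivision morphism $\pi_\sigma$, whose transfer is the subdivision measure $\mu\mapsto\mu_{\ell_\sigma}$ of Definition \ref{3.3}, and the letter-to-letter morphism $\alpha_\sigma$, whose transfer is the ordinary push-forward $(\alpha_\sigma)_*$ of \ref{eq3.2}. For (a), both $\mu\mapsto\mu_{\ell_\sigma}$ (by \ref{eq3.late}, since $\mu_{\ell_\sigma}(w)=\mu(\hat w)$ depends $\R_{\geq 0}$-linearly on the weight function $\mu$) and $\nu\mapsto(\alpha_\sigma)_*\nu$ (by \ref{eq3.2}) are $\R_{\geq 0}$-linear, hence so is their composite $\sigma^\cal M$. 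For (c), since the letter-to-letter push-forward preserves total mass, Remark \ref{3.4} gives $\sigma^\cal M(\mu)(\cal B^\Z)=\sum_{a_i\in\cal A}|\sigma(a_i)|\,\mu(a_i)$, which exceeds $1$ as soon as $\mu$ is a probability measure charging some letter $a_i$ with $|\sigma(a_i)|\geq 2$; any such $\sigma$ furnishes the counterexample. For (e), the word $\pi_\sigma(w)$ begins and ends at full subdivision blocks, so $\widehat{\pi_\sigma(w)}=w$ and \ref{eq3.late} yields the equality $\mu_{\ell_\sigma}(\pi_\sigma(w))=\mu(w)$; on the other hand \ref{eq3.2} gives $(\alpha_\sigma)_*(\nu)(\alpha_\sigma(v))=\sum_{u\in\alpha_\sigma^{-1}(\alpha_\sigma(v))}\nu(u)\geq\nu(v)$, with strict inequality whenever some other preimage $u\neq v$ carries positive mass. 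Chaining the two (equality then inequality) gives \ref{eq3.cylinders} in general, an equality for subdivisions and a generically strict inequality for letter-to-letter maps.

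Next I would prove (d), which is the geometric core and the key input for (b). Splitting again along \ref{eq:can-dec}, it suffices to show: (d1) the subdivision measure of a characteristic measure satisfies $(\mu_w)_{\ell_\sigma}=\mu_{\pi_\sigma(w)}$, and (d2) the push-forward of a characteristic measure under a letter-to-letter morphism $\alpha$ equals $\mu_{\alpha(v)}$. For (d1) I would compare the two weight functions cylinder by cylinder: since $\pi_\sigma$ preserves the shift-period (Lemma \ref{3.2}(4)), $\pi_\sigma(w)$ is primitive exactly when $w$ is, and a short bijection between the cyclic occurrences of a factor $v'$ in $\pi_\sigma(w)$ and the cyclic occurrences of $\widehat{v'}$ in $w$ turns \ref{eq3.late} into the defining count of $\mu_{\pi_\sigma(w)}$. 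For (d2) I would use that $\alpha^\Z$ commutes with the shift and maps the finite orbit $\cal O(v^{\pm\infty})$ onto $\cal O(\alpha(v)^{\pm\infty})$ by a shift-equivariant, hence constant-to-one, surjection; the multiplicity of this fibration is exactly the factor absorbed by the proper-power convention in the definition of characteristic measures, so that $\alpha_*(\mu_v)=\mu_{\alpha(v)}$. Combining, $\sigma^\cal M(\mu_w)=(\alpha_\sigma)_*(\mu_{\pi_\sigma(w)})=\mu_{\alpha_\sigma(\pi_\sigma(w))}=\mu_{\sigma(w)}$.

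Property (b) is where I expect the real difficulty, and it is the \emph{main obstacle}: the canonical decompositions of $\sigma$, $\sigma'$ and $\sigma'\circ\sigma$ are not compatible in any naive way, since concatenating $\pi_\sigma,\alpha_\sigma,\pi_{\sigma'},\alpha_{\sigma'}$ produces a letter-to-letter map immediately followed by a subdivision, the ``wrong'' order, so one cannot simply read off the canonical two-step decomposition of $\sigma'\circ\sigma$. Rather than push the resulting bookkeeping through \ref{image-measure} directly, which is the tedious and unilluminating computation alluded to above, I would argue by density. First, \ref{image-measure} exhibits, for each fixed $w\in\cal B^*$, the value $\sigma^\cal M(\mu)(w)$ as a finite $\R_{\geq 0}$-combination of the coordinates $\mu(\hat u)$; since the weak$^*$-topology on $\cal M(\cal A^\Z)$ coincides with the product topology on $\R^{\cal A^*}$ (see \S\ref{sec:2.1}), this makes $\sigma^\cal M$ continuous, and hence so is the composite ${\sigma'}^\cal M\circ\sigma^\cal M$. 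Thus $(\sigma'\circ\sigma)^\cal M$ and ${\sigma'}^\cal M\circ\sigma^\cal M$ are continuous $\R_{\geq 0}$-linear maps into the Hausdorff space $\cal M(\cal C^\Z)$, and on a weighted characteristic measure $\lambda\mu_w$ they agree, because by (d) together with the functoriality $(\sigma'\circ\sigma)(w)=\sigma'(\sigma(w))$ of the word map one computes $(\sigma'\circ\sigma)^\cal M(\lambda\mu_w)=\lambda\mu_{\sigma'(\sigma(w))}={\sigma'}^\cal M(\lambda\mu_{\sigma(w)})={\sigma'}^\cal M(\sigma^\cal M(\lambda\mu_w))$. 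Since the weighted characteristic measures are dense in $\cal M(\cal A^\Z)$ (see \S\ref{sec:2.1}), the two maps coincide everywhere, which proves (b).
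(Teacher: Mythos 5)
Your proof is correct, and for parts (a), (c), (d) and (e) it follows exactly the route the paper intends: the paper gives no written proof of Lemma \ref{3.6}, declaring it ``an elementary (and not very illuminating) exercise, based on the canonical decomposition (\ref{eq:can-dec})'', which is precisely your reduction to the subdivision morphism $\pi_\sigma$ and the letter-to-letter morphism $\alpha_\sigma$ (your verifications of $\widehat{\pi_\sigma(w)}=w$, of the occurrence bijection behind (d1), and of the constant-to-one orbit fibration behind (d2) are sound, with the proper-power convention correctly absorbed via Lemma \ref{3.2}(4)). Where you genuinely depart from the paper is part (b): instead of the combinatorial bookkeeping needed to compare the canonical decompositions of $\sigma$, $\sigma'$ and $\sigma'\circ\sigma$ --- which is the ``exercise'' the authors have in mind, and which you rightly identify as the tedious step, since composing $\pi_\sigma, \alpha_\sigma, \pi_{\sigma'}, \alpha_{\sigma'}$ puts a letter-to-letter map before a subdivision --- you prove functoriality softly: continuity of $\sigma^\cal M$ read off from the finite-sum formula (\ref{image-measure}), agreement of $(\sigma'\circ\sigma)^\cal M$ and ${\sigma'}^\cal M\circ\sigma^\cal M$ on weighted characteristic measures via (d), and density of those measures in $\cal M(\cal A^\Z)$. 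This is legitimate and non-circular, since your proof of (d) uses only the decomposition and never (b). As for what each approach buys: the direct computation is purely combinatorial and needs no topology, whereas your route imports the density fact and the Hausdorff property of the weak$^*$ topology; in exchange you obtain the continuity statement of Lemma \ref{3.6.5d} for free (your derivation of it is even more direct than the paper's, which re-decomposes $\sigma$), and your mechanism is exactly the alternative the authors themselves flag in Remark \ref{3.8.5d}\,(1) as the method of \cite{Ka2} --- so in effect you have shown that this alternative characterization suffices to establish functoriality, a point the paper never spells out.
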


For the next observation we recall from Section \ref{sec:2.1} that the classical weak$^*$-topology on the space $\cal M(\cal A^\Z)$ is equivalent to the topology induced by the product topology on the space $\R_{\geq 0}^{\cal A^*}$, via the embedding $\cal M(\cal A^\Z) \subset \R_{\geq 0}^{\cal A^*}$ given by $\mu \mapsto (\mu([w]))_{w \in \cal A^*}$.

\begin{lem}
\label{3.6.5d}
For any non-erasing monoid morphism $\sigma: \cal A^* \to \cal B^*$ the induced map $\sigma^\cal M: \cal M(\cal A^\Z) \to \cal M(\cal B^\Z)$ is continuous.
\end{lem}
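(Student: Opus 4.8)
The plan is to exploit the identification, recalled just before the statement, between the weak$^*$-topology on $\cal M(\cal A^\Z)$ and the topology induced by the embedding $\cal M(\cal A^\Z) \hookrightarrow \R_{\geq 0}^{\cal A^*}$, $\mu \mapsto (\mu([w]))_{w \in \cal A^*}$, where the target carries the product topology (and likewise for $\cal B$). Since a map into a product space is continuous precisely when each of its coordinate functions is continuous, I would reduce the problem to showing that for every fixed $w' \in \cal B^*$ the real-valued function
$$\cal M(\cal A^\Z) \to \R\, , \quad \mu \mapsto \sigma^\cal M(\mu)([w'])$$
is continuous.

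At this point I would invoke the explicit formula (\ref{image-measure}) from Definition-Remark \ref{3.5}, which expresses this coordinate as the finite sum
$$\sigma^\cal M(\mu)([w']) = \sum_{u \,\in\, \alpha_\sigma^{-1}(w')} \mu([\hat u])\, .$$
The index set $\alpha_\sigma^{-1}(w')$ is finite because $\alpha_\sigma$ is letter-to-letter and the alphabets are finite; moreover, for each such $u$ the word $\hat u \in \cal A^*$ is determined by $u$ and $\sigma$ alone (not by $\mu$), or is undefined, in which case the corresponding summand is the constant $0$. Each summand $\mu \mapsto \mu([\hat u])$ is thus the restriction to $\cal M(\cal A^\Z)$ of the coordinate projection $\R_{\geq 0}^{\cal A^*} \to \R$ indexed by $\hat u$, which is continuous by the very definition of the product topology. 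A finite sum of continuous functions is continuous, so each coordinate of $\sigma^\cal M$ is continuous, whence $\sigma^\cal M$ itself is continuous.

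I do not anticipate any genuine obstacle: once the weak$^*$-topology is unwound into the product topology, continuity collapses to the two structural facts that $\alpha_\sigma^{-1}(w')$ is finite and that the words $\hat u$ are independent of $\mu$. The only point to state carefully is the convention handling the case where $\hat u$ fails to exist, so that the sum is genuinely a fixed finite $\R_{\geq 0}$-linear combination of coordinate functions. An alternative, essentially equivalent route would be to verify continuity separately for a subdivision morphism $\pi_\ell$ and for a letter-to-letter morphism $\alpha$ from their respective weight-function formulas (\ref{eq3.late}) and (\ref{eq3.2}), and then to combine them through the canonical decomposition (\ref{eq:can-dec}) together with the functoriality of Lemma \ref{3.6}(b); but the direct coordinate-wise argument above is shorter and I would prefer it.
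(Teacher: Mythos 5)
Your proof is correct. It does, however, take a different (and in fact more direct) route than the paper: the paper's own proof follows exactly the ``alternative'' you sketch in your last sentences. Namely, the paper factors $\sigma = \alpha_\sigma \circ \pi_\sigma$ via the canonical decomposition (\ref{eq:can-dec}), argues that the subdivision factor $\pi_\sigma$ induces a continuous map $\cal M(\cal A^\Z) \to \cal M(\cal A_\sigma^\Z)$ from the weight-function formula $\mu_{\ell_\sigma}(w) = \mu(\hat w)$ (``small variations of $\mu$ imply small variations of $\mu_{\ell_\sigma}$''), invokes the well-known continuity of the push-forward under the letter-to-letter factor $\alpha_\sigma$, and concludes by composing. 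Your argument instead unwinds the weak$^*$-topology into the product topology once and for all, and checks coordinate-wise continuity directly from the combined formula (\ref{image-measure}): each coordinate $\mu \mapsto \sigma^\cal M(\mu)([w'])$ is a \emph{fixed} finite sum of coordinate projections $\mu \mapsto \mu([\hat u])$, since $\alpha_\sigma^{-1}(w')$ is finite ($\alpha_\sigma$ being letter-to-letter over finite alphabets) and each $\hat u$ is determined by $u$ and $\sigma$ alone, with the non-existence case correctly handled as a constant zero summand. What your route buys is precision and self-containedness: it replaces the paper's informal ``small variations'' step by an exact structural statement (every output coordinate is a $\mu$-independent finite $\R_{\geq 0}$-linear combination of input coordinates), from which continuity is immediate. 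What the paper's route buys is modularity: it records continuity separately for each of the two elementary morphism types around which the whole paper is organized, so that the statement for general $\sigma$ becomes a corollary of functoriality, in the same spirit as the proofs of Lemma \ref{3.6} and Theorem \ref{5n.3}.
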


\begin{proof}
Following (\ref{eq:can-dec}) we decompose $\sigma$ as product $\sigma = \alpha_\sigma \circ \pi_\sigma$. In order to show that the first factor of this decomposition, the morphism $\pi_\sigma: \cal A^* \to \cal A_\sigma^*$, induces a continuous map on $\cal M(\cal A^\Z)$, we recall that by definition this map is defined via $\mu \mapsto \mu_{\ell_\sigma}$, with $\mu_{\ell_\sigma}(w) = \mu(\hat w)$ for any $w \in \cal A_\sigma^*$, where $\hat w$ is the shortest word in $\cal A^*$ such that $w$ is a factor of 
$\pi_\sigma(\hat w)$. It follows directly that small 
variations 
of $\mu$ imply small 
variations 
of $\mu_{\ell_\sigma}$, so that 
$\pi_\sigma$ induces a continuous map $\cal M(\cal A^\Z) \to \cal M(\cal A_\sigma^\Z)$.

The second factor $\alpha_\sigma: \cal A_\sigma^* \to \cal B^*$ of the above decomposition induces a map $\cal M(\cal A_\sigma^\Z)  \to \cal M(\cal B^\Z)$ that is given by the classical push-forward definition $\mu' \mapsto (\alpha_\sigma)_*(\mu')$ for any $\mu' \in \cal M(\cal A_\sigma^\Z)$, for which the continuity is well known (and easy to prove, by a similar approach as used in the previous paragraph).

It follows that the composition $\cal M(\cal A^\Z) \to \cal M(\cal A_\sigma^\Z) \to \cal M(\cal B^\Z)\, , \,\, \mu \mapsto \mu_{\ell_\sigma} \mapsto (\alpha_\sigma)_*(\mu_{\ell_\sigma})$ is continuous. But this is precisely how the map $\sigma^\cal M: \cal M(\cal A^\Z) \to \cal M(\cal B^\Z)$ is defined (see Definition-Remark \ref{3.5} (2)).
\end{proof}

\begin{rem}
\label{3.8.5d}
(1)
Using the density of the set of weighted characteristic measures $\lambda \mu_w$  (see Section \ref{sec:2.1}) within $\cal M(\cal A^\Z)$ on one hand and the continuity of the map $\sigma^\cal M$ on the other, one can use  
statement (d) of Lemma \ref{3.6} in order to 
alternatively 
determine the image $\sigma^\cal M(\mu)$ for any invariant measure $\mu$ on $\cal A^\Z$ 
as 
limit of weighted characteristic measures. 
This methods has been used for instance in \cite{Ka2} to define the map induced by an automorphism of a free group $\FN$ on the space of currents on $\FN$.
The approach presented here, however, has many practical advantages; for instance it is more efficient 
for most computations.

\smallskip
\noindent
(2)
A third alternative to determine the image $\sigma^\cal M(\mu)$ for any invariant measure $\mu$ on $\cal A^\Z$ 
is given in \cite{BHL2.8-II} by means of 
everywhere growing $S$-adic developments
and vector towers 
over them.
\end{rem}

We also observe:

\begin{lem}
\label{3.7d}
Let $\sigma: \cal A^* \to \cal B^*$ be a non-erasing monoid morphism, and let 
$\mu \in \cal M(\cal A^\Z)$ be an invariant non-zero measure 
with support contained in some subshift $X \subset \cal A^\Z$ that has image subshift $Y: = \sigma(X) \subset \cal B^\Z$. Then the 
transferred 
measure $\sigma^\cal M(\mu) \,\,[= \mu^\sigma]$ has support in $Y$. More precisely, using the terminology from (\ref{eq2.2}), we have
\begin{equation}
\label{eq3.5}
\supp(\sigma^\cal M(\mu)) = \sigma^\Sigma(\supp(\mu)) \, .
\end{equation}
In particular, the morphism $\sigma$ induces for any subshift $X \subset \cal A^\Z$ a well defined map
$$\sigma^\cal M_X: \cal M(X) \to \cal M(\sigma(X))$$
which satisfies all the properties analogous to statements (a) - (e) of Lemma \ref{3.6}, as well as to Lemma \ref{3.6.5d}.
\end{lem}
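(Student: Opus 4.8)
The statement of Lemma \ref{3.7d} has two parts: the support formula \eqref{eq3.5}, and the claim that $\sigma^\cal M$ restricts/co-restricts to a well-defined map $\sigma^\cal M_X : \cal M(X) \to \cal M(\sigma(X))$ inheriting the properties (a)--(e). The plan is to prove the support formula first, since the well-definedness of the restricted map follows immediately from it. The natural strategy is to exploit the canonical decomposition $\sigma = \alpha_\sigma \circ \pi_\sigma$ from \eqref{eq:can-dec} together with functoriality (Lemma \ref{3.6}(b) and Remark \ref{4.3}(1)), so that it suffices to establish \eqref{eq3.5} separately for a subdivision morphism $\pi_\ell$ and for a letter-to-letter morphism $\alpha$, and then compose.

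First I would treat the letter-to-letter case, where $\sigma = \alpha$ is induced by a map $\alpha_\cal A : \cal A \to \cal B$. Here $\alpha^\Z$ commutes with the shift, $\sigma^\Sigma(X) = \alpha^\Z(X)$, and $\sigma^\cal M(\mu) = \alpha_*(\mu)$ is the ordinary push-forward; the support identity $\supp(\alpha_*\mu) = \alpha^\Z(\supp\mu)$ is then exactly the already-stated fact (in the text preceding \eqref{eq3.2}) that the push-forward of an invariant measure has support $\alpha^\Z(\supp(\mu))$. So this case is essentially free. Next, for a subdivision morphism $\pi_\ell : \cal A^* \to \cal A_\ell^*$, I would argue directly through the language characterization \eqref{eq:supp}: a word $w \in \cal A_\ell^*$ lies in $\cal L(\supp(\mu_\ell))$ iff $\mu_\ell(w) > 0$, and by Definition \ref{3.3} this holds iff $w$ is a factor of some $\pi_\ell(\hat w)$ with $\mu(\hat w) > 0$. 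Comparing with the description in Definition-Remark \ref{image-shift}(3) that $\sigma(X)$ is generated by the language $\pi_\ell(\cal L(X))$, this says precisely that $\cal L(\supp(\mu_\ell)) = \cal L(\pi_\ell^\Sigma(\supp(\mu)))$, whence the two subshifts coincide.

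Having \eqref{eq3.5} for each factor, I would combine them: using Lemma \ref{3.6}(b) we have $\sigma^\cal M = \alpha_\sigma^\cal M \circ \pi_\sigma^\cal M$, and using Remark \ref{4.3}(1) we have $\sigma^\Sigma = \alpha_\sigma^\Sigma \circ \pi_\sigma^\Sigma$, so
$$\supp(\sigma^\cal M(\mu)) = \alpha_\sigma^\Sigma(\supp(\pi_\sigma^\cal M(\mu))) = \alpha_\sigma^\Sigma(\pi_\sigma^\Sigma(\supp(\mu))) = \sigma^\Sigma(\supp(\mu)),$$
which is \eqref{eq3.5}. For the second assertion, if $\mu \in \cal M(X)$ then $\supp(\mu) \subset X$, and monotonicity of $\sigma^\Sigma$ (Remark \ref{4.3}(2)) together with \eqref{eq3.5} gives $\supp(\sigma^\cal M(\mu)) = \sigma^\Sigma(\supp(\mu)) \subset \sigma^\Sigma(X) = \sigma(X)$, so $\sigma^\cal M(\mu) \in \cal M(\sigma(X))$; thus the restriction $\sigma^\cal M_X$ is well defined. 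The inherited properties (a), (b) (continuity from Lemma \ref{3.6.5d}), and functoriality then transfer verbatim by restriction, since $\cal M(X)$ and $\cal M(\sigma(X))$ are sub-cones of the full measure cones on which these properties were already established.

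The main obstacle I anticipate is the subdivision case, where one must be careful with the boundary bookkeeping implicit in the definition of $\hat w$ — in particular the asymmetry that $\pi_\ell$ is not surjective and that $w$ need not begin at a ``subdivision boundary,'' so that the shortest witness $\hat w$ may be strictly longer than what a naive length count suggests. Verifying cleanly that $\mu_\ell(w) > 0$ characterizes membership in $\cal L(\sigma(X))$ in both directions (that $w$ is a factor of $\pi_\ell(\cal L(X))$ precisely when such a positive-measure witness exists) is where the care is needed; the letter-to-letter case and the final assembly are routine by comparison.
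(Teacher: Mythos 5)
Your proof is correct, but it is organized differently from the paper's. The paper establishes (\ref{eq3.5}) for an arbitrary non-erasing $\sigma$ in one pass, by a direct double inclusion between the languages $\cal L(\supp(\mu^\sigma))$ and $\cal L(\sigma^\Sigma(\supp(\mu)))$ via the characterization (\ref{eq:supp}): the inclusion ``$\supset$'' follows from the cylinder inequality of Lemma \ref{3.6}~(e) (if $\mu(w)>0$ then $\mu^\sigma(\sigma(w)) \geq \mu(w) > 0$, and factors inherit positivity), while the inclusion ``$\subset$'' is obtained by unwinding formula (\ref{image-measure}): from $\mu^\sigma(w')>0$ one gets $u \in \alpha_\sigma^{-1}(w')$ with $\mu(\hat u)>0$, whence $w'$ is a factor of $\sigma(\hat u)$ and $\hat u \in \cal L(\supp(\mu))$. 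You instead split the statement along the canonical decomposition (\ref{eq:can-dec}), prove it for each elementary type, and compose using functoriality of both $(\cdot)^{\cal M}$ and $(\cdot)^{\Sigma}$. The ingredients largely coincide --- your subdivision and letter-to-letter steps taken together are essentially the paper's ``$\subset$'' inclusion, and your use of $\mu_{\ell}(\pi_{\ell}(w)) = \mu(w)$ together with factor-monotonicity of weight functions replaces the paper's appeal to Lemma \ref{3.6}~(e) for ``$\supset$''. What your modular route buys is structural clarity and reuse of the two elementary cases; what it costs is two points that must be checked rather than quoted: the letter-to-letter case rests on the assertion $\supp(\alpha_*(\mu)) = \alpha^\Z(\supp(\mu))$, which the paper states in subsection \ref{sec:letter-to-letter} without proof (it is standard: $\alpha^\Z$ is a continuous shift-commuting map of compact spaces, so the image of the support is closed and equals the support of the push-forward), and the subdivision case needs exactly the $\hat w$-bookkeeping you flag --- namely that any witness $v \in \cal A^*$ with $\mu(v)>0$ whose image $\pi_\ell(v)$ contains $w$ as a factor must contain $\hat w$ as a factor (the subdivision letters $a_i(k)$ record their position, so the occurrence of $w$ pins down $\hat w$ uniquely inside $v$), so that Kirchhoff monotonicity gives $\mu_\ell(w) = \mu(\hat w) \geq \mu(v) > 0$. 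Both points go through, so your argument is complete.
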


\begin{proof}
Recall first from 
equivalence (\ref{eq:supp}) 
that the language of the support of any shift-invariant measure is given by all words with positive measure of the associated cylinder. 

In particular, any $w \in \cal A^*$ belongs to $\cal L(\supp(\mu))$ if and only if $\mu(w) > 0$. In this case $\sigma(w)$ belongs to $\cal L(\sigma^\Sigma(\supp(\mu)))$, and any $w' \in \cal L(\sigma^\Sigma(\supp(\mu)))$ is a factor of some such $\sigma(w)$. 
From (\ref{eq3.cylinders}) we know $\mu^\sigma(\sigma(w)) \geq \mu(w) $, which implies that $\sigma(w)$ belongs to $\cal L(\supp(\mu^\sigma)))$.
Since $\mu^\sigma(w') \geq \mu^\sigma(\sigma(w))$, the same applies to any factor $w'$ of $\sigma(w)$.
We thus obtain
$$\cal L(\sigma^\Sigma(\supp(\mu))) \subset \cal L(\supp(\mu^\sigma)) \, .$$

Conversely 
(again using (\ref{eq:supp})), 
any $w' \in \cal B^*$ belongs to $\cal L(\supp(\mu^\sigma))$ if and only if $\mu^\sigma(w') > 0$. In this case there exists a word 
$w 
\in \cal A_\sigma^*$ with $\alpha_\sigma(w
) = w'$, and with $\mu_{\ell_\sigma}(w
) > 0$. Hence 
(see formula (\ref{image-measure})) 
there is a word $\hat w \in \cal A^*$ with $\mu(\hat w) > 0$ such that $\pi_\sigma(\hat w)$ contains $w
$ as factor. It follows that $w'$ is a factor of $\sigma(\hat w)$ and that $\hat w \in \cal L(\supp(\mu))$. Since $\sigma(\cal L(\supp(\mu))) \subset \cal L(\sigma^\Sigma(\supp(\mu)))$, this implies
$$\cal L(\supp(\mu^\sigma)) \subset \cal L(\sigma^\Sigma(\supp(\mu))) \, .$$

Hence we have $\cal L(\supp(\mu^\sigma)) = \cal L(\sigma^\Sigma(\supp(\mu)))$, which implies the equality (\ref{eq3.5}).
\end{proof}

The following discussion, suggested to us by a comment of a referee, concerns a potentially alternative approach to the measure transfer map. 
We use here the terminology from \S 3 of \cite{Durand-Perrin}.

\begin{rem}
\label{suggestion-from-referee}
Given 
a subshift $X \subset \cal A^\Z$, one may consider the group 
$H(X, \Z) = C(X, \Z)/
\partial_T C(X, \Z)$, where $C(X, \Z)$ denotes the abelian group of continuous integer valued functions on $X$, and 
$\partial_T(X)$ is the subgroup generated by functions of the form $f \circ T - f$. 
To every invariant measure $\mu$ on $X$ one associates the linear form 
$$\alpha_\mu: H(X, \Z) \to \R\, , \,\, f \mapsto \int f d\mu \, ,$$
i.e. an element in the dual group $H(X, \R)^* := {\rm Hom}(H(X, \Z), \R)$.

The group $H(X, \Z)$ inherits from $C(X, \Z)$ the canonical order as well as the ``order unit'' given by the characteristic function ${\bf 1}_X$ (see \S 2.1.1 of \cite{Durand-Perrin}). 
Under the mild additional assumption that $X$ is recurrent (i.e. there exists some ${\bf x} \in X$ with positive half-word ${\bf x}_{[1, +\infty)}$ that is dense in $X$),  the group $H(X, \Z)$ is a ``unital ordered group'', sometimes denoted by $K^0(X, T)$ (see \cite{Durand-Perrin}, \S 3.3). An order- and unit-preserving morphism from $K^0(X, T)$ to $\R$ is called a {\em state}. The above map $\mu \mapsto \alpha_\mu$ gives a well known bijection (see Theorem 3.9.3 of \cite{Durand-Perrin}) from the set of invariant probability measures on $X$ to the set of states on $K^0(X, T)$. This bijection extends to an $\R_{\geq 0}$-linear isomorphism $i_X^\cal M$ from $\cal M(X)$ to the non-negative cone $H_+^*(X, \R)$ in the dual group $H^*(X,\R)$.

Given now a morphism $\sigma: \cal A^* \to \cal B^*$, then for the image subshift $Y = \sigma(X)$ the measure transfer map defines canonically a map $i_Y^\cal M \circ \sigma_X^\cal M \circ (i_X^\cal M)^{-1}: H_+^*(X,\R) \to H_+^*(Y,\R)$, and hence, after normalization, a map from the set of states for $X$ to the set of states for $Y$. The natural question arises, whether this map can be defined alternatively by a more direct approach.

Indeed, if $\sigma: \cal A^* \to \cal B^*$ is letter-to-letter, then for two subshifts $X \subset \cal A^\Z$ and $Y = \sigma(X) = \sigma^\Z(X)$ any function $g \in \partial_T C(Y, \Z)$ defines a function $g \circ \sigma^\Z \in C(X, \Z)$ which also satisfies
\begin{equation}
\label{eq.coinvariants+}
g \circ \sigma^\Z \in \partial_T C(X, \Z)\, .
\end{equation}
Thus $\sigma$ defines a contravariant morphism $H(\sigma): H(Y, \Z) \to H(X, \Z)$, and hence an induced dual map $H(\sigma)^*: H(X, \R)^* \to H(Y, \R)^*$. 
Since $X$ is assumed to be recurrent, 
we obtain via the above isomorphisms $i_X^\cal M$ and $i_Y^\cal M$ a map $(i_Y^\cal M)^{-1} \circ H(\sigma)^* \circ i_X^\cal M : \cal M(X) \to \cal M(Y)$, which agrees 
with the map $\sigma^\cal M$, since in this letter-to-letter case both coincide 
with the push-forward map $\sigma^\Z_*\, $.

However, for any attempt to
push the above approach further to get an alternative ``coinvariant'' description of the measure transfer map,
there is a problem at the very base, in that the property (\ref{eq.coinvariants+}) fails to hold in most cases where $\sigma$ is not letter-to-letter. We describe below a simple example which highlights the basic problem:

\medskip

Set $\cal A = \{a, b, c\}$ and $B = \{d, e\}$, and define $\sigma: \cal A^* \to \cal B^*$ by setting $\sigma(a) = d\,  , \,\, \sigma(b) = e$ and $\sigma(c) = de$. Set $X = \cal A^\Z$ and $Y = \cal B^\Z$ and notice $Y = \sigma(X)$.

Consider now the function $g: Y \to \Z$ defined by $g({\bf y}) = 0$ if ${\bf y} \in [d]$ and  $g({\bf y}) = 1$ if ${\bf y} \in [e]$, which gives $g({\bf y}) = 0$ if ${\bf y} \in [dd] \cup [de]$ as well as $g({\bf y}) = 1$ if ${\bf y} \in [ed] \cup [ee]$. Since $T([dd] \cup [ed]) = [d]$ and $T([de] \cup [ee]) = [e]$, we obtain $g \circ T ({\bf y}) = 0$ if ${\bf y} \in [dd] \cup [ed]$ and $g \circ T ({\bf y}) = 1$ if ${\bf y} \in [de] \cup [ee]$. Hence the function $g \circ T - g \in \partial_T C(Y,\Z)$ satisfies $(g \circ T - g) ({\bf y}) = 0$ if ${\bf y} \in [dd] \cup [ee]$, $(g \circ T - g) ({\bf y}) = 1$ if ${\bf y} \in [de]$ and $(g \circ T - g) ({\bf y}) = -1$ if ${\bf y} \in [ed]$.

Let us now consider the function $(g \circ T - g) \circ \sigma^\Z \in C(X, \Z)$, and its value on the periodic word ${\bf x} \in X$ given by ${\bf x} = \ldots ccc \ldots$. We have ${\bf x}_{[1, +\infty)} = ccc \ldots$ and thus $\sigma^\Z({\bf x})_{[1, +\infty)} = \sigma({\bf x}_{[1, +\infty)}) = dedede \ldots$, so that $\sigma^\Z({\bf x}) \in [de]$. As a consequence we obtain $(g \circ T - g) \circ \sigma^\Z({\bf x}) = 1$, and since $T({\bf x}) = {\bf x}$, also $(g \circ T - g) \circ\sigma^\Z \circ T^n({\bf x}) = 1$ for any integer $n \geq 0$. However, from Proposition 3.2.1 of \cite{Durand-Perrin} we know that for any function $f \in \partial_T(C, \Z)$ the sequence of function $f + f \circ T + \ldots + f \circ T^n$ is bounded uniformly. It follows that $(g \circ T - g) \circ \sigma^\Z \notin \partial_TC(X, \Z)$, so that property (\ref{eq.coinvariants+}) fails.
\end{rem}

%%%%%%%%%
\section{Evaluation of the transferred measure $\sigma^\cal M(\mu)$}
\label{sec:4d}

\subsection{A first example for the measure transfer}

${}^{}$

\smallskip

We will illustrate in this subsection the induced measure transfer map $\sigma^\cal M$ from Definition-Remark \ref{3.5} for an explicitely given morphism $\sigma: \cal A^* \to \cal B^*$. For this example 
we will carry through in all detail, for any invariant measure $\mu$ on $\cal A^\Z$, the computation of the values of $\sigma^\cal M(w)$ for any word $w \in \cal A^*$ of length $|w| \leq 2$.

\begin{convention}
\label{4.1d}
We simplify here (as well as in 
some 
other concrete computations below) the notation used before by writing $a, b, c, \ldots$ instead of $a_1, a_2, a_3, \ldots$ for the elements of the given alphabet $\cal A$, and we write $a_k$ or $b_k$ instead of $a(k)$ or $b(k)$ for the letters of a corresponding subdivision alphabet.
\end{convention}

\smallskip

For the alphabets $\cal A = \{a, b\}$ and $\cal B = \{c, d\}$ 
consider the morphism given by
$$\sigma: \cal A^* \to \cal B^* \, , \,\,\, a \mapsto cdc,\, b \mapsto dcc \, .$$ 
We derive $\ell_\sigma(a) = \ell_\sigma(b) = 3$ and $\cal A_\sigma = \{a_1, a_2, a_3, b_1, b_2, b_3\}$ as well as the corresponding subdivision morphisms $\pi_\sigma: \cal A^* \to \cal A_\sigma^*$ given by
$$\pi_\sigma(a) = a_1 a_2 a_3, \, \pi_\sigma(b) = b_1 b_2 b_3 \, .$$
Similarly, the corresponding letter-to-letter morphism $\alpha_\sigma: \cal A_\sigma^* \to \cal B^*$ is given by
$$
\alpha_\sigma(a_1) = \alpha_\sigma(a_3) = \alpha_\sigma(b_2) = \alpha_\sigma(b_3) = c, \,\,\alpha_\sigma(a_2) = \alpha_\sigma(b_1) = d \, .$$
We obtain (for $\mu_\sigma := \mu_{\ell_\sigma}$)
$$\mu_\sigma(a_1) = \mu_\sigma(a_2) = \mu_\sigma(a_3) = \mu(a),\,\, \mu_\sigma(b_1) = \mu_\sigma(b_2) = \mu_\sigma(b_3) = \mu(b) \, ,$$
and thus (following 
(\ref{eq3.2}))
$$
\begin{array}{ccccc}
(\alpha_\sigma)_*(\mu_\sigma)(c) &= &\mu_\sigma(a_1) + \mu_\sigma(a_3) + \mu_\sigma(b_2) + \mu_\sigma(b_3) &=& 2 \mu(a) + 2 \mu(b) \, ,
\\
(\alpha_\sigma)_*(\mu_\sigma)(d) &=&\mu_\sigma(a_2) + \mu_\sigma(b_1)
&=& \mu(a) + \mu(b)  \, .
\end{array}
$$

Similarly, one computes
$$
\begin{array}{c}
\mu_\sigma(a_1 a_2) = \mu_\sigma(a_2 a_3) = \mu(a),\,\, \mu_\sigma(b_1 b_2) = \mu_\sigma(b_2 b_3) = \mu(b)\, ,\\
\mu_\sigma(a_3 a_1) = \mu(a a)\, ,
\,\, \mu_\sigma(a_3 b_1) = \mu(a b)\, ,
\,\, \mu_\sigma(b_3 a_1) = \mu(b a)\, ,
\,\, \mu_\sigma(b_3 b_1) = \mu(b b) \, .
\end{array}
$$
and 
$\mu_\sigma(w) = 0$
for any other $w \in \cal A^*$ of length $|w| = 2$.

Correspondingly, one obtains 
$$
\begin{array}{clc}
(\alpha_\sigma)_*(\mu_\sigma)(cc) 
= 
&[\mu_\sigma(a_1 a_1) + \mu_\sigma(a_1 a_3) + \mu_\sigma(a_1 b_2) + \mu_\sigma(a_1 b_3) ]
\\
&+
[\mu_\sigma(a_3 a_1) + \mu_\sigma(a_3 a_3) + \mu_\sigma(a_3 b_2) + \mu_\sigma(a_3 b_3) ]
\\
&+[\mu_\sigma(b_2 a_1) + 
\mu_\sigma(b_2 a_3) + \mu_\sigma(b_2 b_2) + \mu_\sigma(b_2 b_3) ]
\\
&+
[\mu_\sigma(b_3 a_1) + \mu_\sigma(b_3 a_3) + \mu_\sigma(b_3 b_2) + \mu_\sigma(b_3 b_3)] 
\\
\qquad \qquad \qquad \,\,\, =& \mu(a a) + \mu(b) + \mu(b a)\, ,
\end{array}
$$
further
$$
\begin{array}{c}
(\alpha_\sigma)_*(\mu_\sigma)(cd) 
= 
[\mu_\sigma(a_1 a_2) + \mu_\sigma(a_1 b_1) ]
+
[\mu_\sigma(a_3 a_2) + \mu_\sigma(a_3 b_1) ]
\\
+
[\mu_\sigma(b_2 a_2) + \mu_\sigma(b_2 b_1) ]
+
[\mu_\sigma(b_3 a_2) + \mu_\sigma(b_3 b_1) ]
\\
=
\mu(a) + \mu(ab) + 
\mu(bb)\, ,
\end{array}
$$
$$
\begin{array}{c}
(\alpha_\sigma)_*(\mu_\sigma)(dc) 
= 
[\mu_\sigma(a_2 a_1) + \mu_\sigma(a_2 a_3)
+
\mu_\sigma(a_2 b_2) + \mu_\sigma(a_2 b_3) ]
\\
+
[\mu_\sigma(b_1 a_1) + \mu_\sigma(b_1 a_3)
+
\mu_\sigma(b_1 b_2) + \mu_\sigma(b_1 b_3) ]
\\
= \mu(a) + \mu(b)\, ,
\end{array}
$$
and finally
$$
\begin{array}{c}
(\alpha_\sigma)_*(\mu_\sigma)(dd) 
= 
[\mu_\sigma(a_2 a_2) + \mu_\sigma(a_2 b_1)]
+
[\mu_\sigma(b_1 a_2) + \mu_\sigma(b_1 b_1)]
= 0\, .
\end{array}
$$

Since by definition we have $\sigma^\cal M(\mu) = (\alpha_\sigma)_*(\mu_\sigma)$, we have computed
$$
\begin{array}{cclc}
\sigma^\cal M(\mu)(c) &=& 2(\mu(a) + \mu(b))
\\
\sigma^\cal M(\mu)(d) &=& \mu(a) + \mu(b)
\\
\sigma^\cal M(\mu)(cc) &=& \mu(b) + \mu(aa) + \mu(ba)
\\
\sigma^\cal M(\mu)(cd) &=& \mu(a) + \mu(ab) 
+ \mu(bb)
\\
\sigma^\cal M(\mu)(dc) &=& \mu(a) + \mu(b) 
\\
\sigma^\cal M(\mu)(dd) &=& 0 \, .
\end{array}$$  

%%%%%%%%%%%%%%
\subsection{An alternative evaluation method}

${}^{}$

\smallskip

As illustrated by the example considered in the previous subsection, already for fairly simple morphisms $\sigma$ the preimage set $\alpha_\sigma^{-1}(w)$ may become rather large, even for small $|w|$.
In this subsection we explain how a more efficient evaluation technique is obtained (compare formulas (\ref{image-measure2}) and (\ref{image-measure})) and we give an example of a typical computation.

\smallskip

Given a morphism $\sigma: \cal A^* \to \cal B^*$, consider any word $w = x_1 \ldots x_n \in \cal A^*$ and its image $\sigma(w) = y_1 \ldots y_m \in \cal B^*$, with letters $x_i \in \cal A$ and $y_j \in \cal B$. 
An {\em occurrence of 
$w' \in \cal B^*$ 
in $\sigma(w)$} is a factor $y_{k'} \ldots y_{\ell'}$ of $y_1 \ldots y_m$ which satisfies $w' = y_{k'} \ldots y_{\ell'} \, .$

An occurrence of $w'$ in $\sigma(w)$ is called {\em essential} if its first letter occurs in $\sigma(x_1)$ and its last letter in $\sigma(x_n)$. 
In particular, for $|w| = 1$ any occurrence of $w'$ in $\sigma(w)$ is {essential}. If $w$ has length $|w| =  2$, then an occurrence of $w'$ in $\sigma(w)$ is {essential} if the factor $w'$ overlaps from $\sigma(x_1)$ into $\sigma(x_2)$. 
For $|w| \geq 3$ a factor $w'$ of $\sigma(w)$ is an essential occurrence if $w'$ contains the image $\sigma(x_2 \ldots x_{n-1})$ of the ``maximal inner factor'' $x_2 \ldots x_{n-1}$ of $w$ as factor, but not as prefix or suffix.

The number of all essential occurrences of $w'$ in $\sigma(w)$ will be denoted by 
$\lfloor \sigma(w)\rfloor_{w'}$. It follows directly from these definitions that for any $w' \in \cal B^*$ the set of all words $w$ in $\cal A^*$, for which $\sigma(w)$ contains at least one essential occurrence of $w'$, is finite. Indeed, 
for $|w'| \geq 2$ 
one easily verifies that 
any such $w$ must satisfy 
\begin{equation}
\label{bound}
\frac{|w'|}{||\sigma||} \leq |w| \leq 
\frac{|w'|-2}{\langle \sigma \rangle}+2 \, ,
\end{equation}
where 
$||\sigma||$ and $\langle \sigma \rangle$ 
denote the biggest and smallest length 
respectively 
of any of the letter images $\sigma(a_i)$. 

\begin{prop}
\label{3.5.5}
Let $\sigma: \cal A^* \to \cal B^*$ be any non-erasing morphism of free monoids, and let $\mu$ be any invariant measure on $\cal A^\Z$. Then for any $w' \in \cal B^*$ the transferred measure $\mu^\sigma = \sigma^\cal M(\mu)$, evaluated on the cylinder $[w']$, has the value
\begin{equation}
\label{image-measure1.5}
\mu^\sigma(w') = 
\sum_{u \,\in\, \cal A^* }
{\lfloor\sigma(u) \rfloor}_{w'} \cdot \mu(u) \, .
\end{equation}
In particular, for $|w'| \geq 2$, we have
\begin{equation}
\label{image-measure2}
\mu^\sigma(w') = 
\sum_{{\big \{}u \,\in\, \cal A^* \,{\big |}\, |u| \leq \frac{|w'|-2}{\langle \sigma\rangle}+2 {\big \}}}
{\lfloor\sigma(u) \rfloor}_{w'} \cdot \mu(u) \, .
\end{equation}
\end{prop}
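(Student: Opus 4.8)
The plan is to start from the defining formula (\ref{image-measure}) for the transferred measure, namely $\sigma^\cal M(\mu)(w') = \sum_{u \,\in\, \alpha_\sigma^{-1}(w')} \mu(\hat u)$, and to reorganize this \emph{finite} sum by grouping the index words $u$ according to the value of $\hat u$. Since $\alpha_\sigma$ is letter-to-letter, every $u \in \alpha_\sigma^{-1}(w')$ has length $|u| = |w'|$, so the index set is indeed finite; and for those $u$ for which $\hat u$ does not exist we have $\mu(\hat u) = 0$ by the convention of Definition \ref{3.3}, so such terms drop out. After grouping I obtain $\sigma^\cal M(\mu)(w') = \sum_{v \,\in\, \cal A^*} \mu(v) \cdot N(v)$, where $N(v) := \card\{u \in \alpha_\sigma^{-1}(w') : \hat u = v\}$. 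It then remains to identify $N(v)$ with the number $\lfloor \sigma(v) \rfloor_{w'}$ of essential occurrences of $w'$ in $\sigma(v)$.

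The heart of the argument is to establish, for each fixed $v = x_1 \cdots x_n \in \cal A^*$, a bijection between the set $\{u \in \alpha_\sigma^{-1}(w') : \hat u = v\}$ and the set of essential occurrences of $w'$ in $\sigma(v)$. Since $\alpha_\sigma \circ \pi_\sigma = \sigma$ and $\alpha_\sigma$ preserves both length and position, every occurrence of $w'$ as a factor of $\sigma(v)$ lifts to exactly one factor $u$ of $\pi_\sigma(v) = \pi_\sigma(x_1) \cdots \pi_\sigma(x_n)$ occupying the same positions and satisfying $\alpha_\sigma(u) = w'$, and conversely each such factor projects back. Under this correspondence the condition $\hat u = v$—that $v$ be the shortest word whose $\pi_\sigma$-image contains $u$—translates by minimality into the requirement that the first letter of $u$ lie in $\pi_\sigma(x_1)$ and its last letter in $\pi_\sigma(x_n)$, which is exactly the defining condition of an \emph{essential} occurrence. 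The one point needing care is that the final sum ranges over abstract words $u$ rather than over positions, so I must check that for fixed $v$ two distinct essential occurrences yield two distinct words $u$. This is guaranteed by the position-encoding built into the subdivision alphabet: an essential occurrence starts at some position $k$ inside the first block $\pi_\sigma(x_1) = x_1(1) \cdots x_1(\ell_\sigma(x_1))$, so the first letter of the lifted $u$ is the subdivision letter $x_1(k)$, and distinct starting positions produce distinct first letters, as the $x_1(k)$ are pairwise distinct letters of $\cal A_\sigma$. Since all these words $u$ share the common fixed length $|w'|$, the starting position determines $u$ entirely, so distinct occurrences give distinct $u$. Hence the correspondence is a bijection and $N(v) = \lfloor \sigma(v) \rfloor_{w'}$.

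Substituting this identity into the grouped sum yields formula (\ref{image-measure1.5}). For the refined statement (\ref{image-measure2}) with $|w'| \geq 2$, I invoke the upper bound established in (\ref{bound}): whenever $\lfloor \sigma(v) \rfloor_{w'} > 0$, i.e. whenever $\sigma(v)$ admits at least one essential occurrence of $w'$, the length of $v$ is forced to satisfy $|v| \leq \frac{|w'|-2}{\langle \sigma \rangle} + 2$. Consequently every term with $|v|$ exceeding this bound contributes zero, and the sum in (\ref{image-measure1.5}) collapses to the finite sum in (\ref{image-measure2}). I expect the only genuinely delicate step to be the bijection of the second paragraph—specifically the verification that summing over abstract index words $u$ counts the essential occurrences exactly, with neither over- nor under-counting—whereas the grouping of the sum and the truncation via (\ref{bound}) are routine.
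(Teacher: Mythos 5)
Your proof is correct and takes essentially the same route as the paper's: both start from the defining formula (\ref{image-measure}) and reorganize that finite sum via the two-way correspondence between words $u \in \alpha_\sigma^{-1}(w')$ with $\hat u = v$ and essential occurrences of $w'$ in $\sigma(v)$, then truncate using the bound (\ref{bound}). Your explicit check that distinct essential occurrences lift to distinct words $u$ (via the position information encoded in the subdivision letters) is a point the paper's proof leaves implicit, but the underlying argument is identical.
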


\begin{proof}
Recall 
that in formula (\ref{image-measure}), for any $u \in \alpha_\sigma^{-1}(w')$, 
either $\hat u$ is defined as shortest word in $\cal A^*$ with the property that $\pi_\sigma(\hat u)$ contains $u$ as factor, or else (if such $\hat u$ doesn't exist) we have formally set $\mu(\hat u) = \mu_{\ell_\sigma}(u) = 0$.
In the first case 
the factor $u$ of $\pi_\sigma(\hat u)$ defines an essential occurrence of $w' = \alpha_\sigma(u)$ in $\sigma(\hat u)$.  

Conversely, every essential occurrence of $w'$ in 
$\sigma(v)$, 
for any $v \in \cal A^*$, defines a factor $u$ of $\pi_\sigma(v)$ with $u \in \alpha_\sigma^{-1}(w')$ for which we have $\hat u = v\,$. Indeed, the 
word $\hat u$ can not be a proper factor of $v$, or else the given occurrence of $w'$ as factor 
of $\sigma(v)$ would not have been essential.

It follows that the sums in the formulas (\ref{image-measure}) and (\ref{image-measure2}) differ only in their organization of the indexing, 
so that the results of the right hand sides of (\ref{image-measure}) and of (\ref{image-measure2}) must be equal.
\end{proof}

We will illustrate now that the new formula (\ref{image-measure2}) is a lot more convenient in practice (in particular in view of the fact that in the example below the set $\alpha_\sigma^{-1}(w)$ consists of 
648 elements):

\begin{example}
\label{autre-exemple}
Let us consider 
$\cal A = \{a, b, c\}, \, \cal B = \{d, e\}$ and $\sigma$ given by
$$a \mapsto ded, \, b \mapsto de, \, c \mapsto dedd \, .$$
Let us compute $\mu^\sigma(w) \,[= \sigma^\cal M(\mu)(w)]$ for any invariant measure $\mu$ on $\cal A^\Z$, for the 
randomly picked word 
$$w = dded \, .$$
By (\ref{bound}) it suffices to consider any 
$u \in \cal A^*$ of length $|u |\leq 3$. We 
quickly check that that 
$$\lfloor\sigma(u)\rfloor_{ w} = 0 \quad \text{for} \quad
u \in \{a, b, c, ab, ba, bb, bc, cb\}$$
and 
$$\lfloor\sigma(u)\rfloor_{ w} = 1 \quad \text{for} \quad
u \in \{aa, ac, ca, cc\}\, .$$
For any word 
$u = x_1 x_2 x_3 \in \cal A^*$ with $x_1, x_2, x_3 \in \cal A$ and $x_2 \neq b$ 
the definition of $\lfloor\sigma(u)\rfloor_{w}$ gives directly $\lfloor\sigma(u)\rfloor_{w} = 0$.
It remains to 
check that
$$\lfloor\sigma(u)\rfloor_{ w} = 0 \quad \text{for} \quad
u \in \{bba, bbb, bbc\}\, ,$$ 
and 
$$\lfloor\sigma(u)\rfloor_{ w} = 1 \quad \text{for} \quad
u \in 
\{aba, abb, abc, cba, cbb, cbc\}\, ,$$
in order to obtained the desired formula
$$\mu^\sigma(w) = \mu(aa) + \mu(ac) + 
\mu(ca) + \mu(cc) + 
\mu(aba) + \mu(abb) + \mu(abc) + \mu(cba) + \mu(cbb) + \mu(cbc)$$
$$= 
\mu(aa) + \mu(ac) + \mu(ca) + \mu(cc) + 
\mu(ab) + \mu(cb)
=
\mu(a) + \mu(c) \, .
$$
\end{example}

\begin{rem}
\label{letter-image-frequency}
Since a factor $w' \in \cal B^*$ of length $|w'| = 1$ in any $\sigma(w) = \sigma(x_1 \ldots x_n)$ cannot ``overlap'' from some $\sigma(x_i)$ into the adjacent $\sigma(x_{i+1})$, 
the only essential occurrences of $w'$ in any $\sigma(w)$ can take place if one has $|w| = 1$.
Hence we deduce from (\ref{image-measure1.5}) for any $b_j \in \cal B$ the formula
\begin{equation}
\label{image-measure3}
\mu^\sigma(b_j) \, [=\sigma^\cal M(\mu)(b_j)] = 
\sum_{a_k \in \cal A}
{|\sigma(a_k) |}_{ b_j } \cdot \mu(a_k) \, .
\end{equation}
\end{rem}

Every shift-invariant measure $\mu$ on $\cal A^\Z$ defines canonically a 
{\em letter frequency vector} 
$\vec v(\mu) = (\mu([a_k]))_{a_k \in \cal A} \in \R_{\geq 0}^\cal A$, 
which plays an important role in many contexts (see 
for instance 
\cite{BHL2.8-II} or \cite{B+Co}). 

We observe directly from (\ref{image-measure3}):

\begin{prop}
\label{22.7}
Let $\sigma: \cal A^* \to \cal B^*$ be a non-erasing monoid morphism, and let $\mu \in \cal M(\cal A^\Z)$ be an invariant measure. Then the letter frequency vectors $\vec v(\mu)$ and 
$\vec v(\mu^\sigma)$, associated to $\mu$ and to its transferred measure $\mu^\sigma = \sigma^\cal M(\mu) \in \cal M(\cal B^\Z)$ respectively, satisfy
\begin{equation}
\label{more-formally}
\vec v(\mu^\sigma) = M(\sigma) \cdot \vec v(\mu) \, ,
\end{equation}
where $M(\sigma)$ denotes the incidence matrix of $\sigma$
- see
(\ref{incidence-m}).
\qed
\end{prop}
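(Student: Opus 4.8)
The plan is to observe that this Proposition is nothing more than a matrix-form restatement of the scalar formula (\ref{image-measure3}) already established in Remark \ref{letter-image-frequency}; since the statement carries only a \qed, the entire content is bookkeeping about indices, and I would present it as such.

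First I would recall that the letter frequency vectors have, by definition, as their $b_j$-th (resp. $a_k$-th) component the values $\sigma^\cal M(\mu)([b_j])$ (resp. $\mu([a_k])$), and that the incidence matrix $M(\sigma)$ from (\ref{incidence-m}) carries the entry $|\sigma(a_k)|_{b_j}$ in position $(j,k)$. Hence, by the definition of matrix multiplication, the $b_j$-th component of the product $M(\sigma) \cdot \vec v(\mu)$ is exactly $\sum_{a_k \in \cal A} |\sigma(a_k)|_{b_j} \cdot \mu([a_k])$.

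Next I would invoke formula (\ref{image-measure3}), which asserts precisely that this same sum equals $\sigma^\cal M(\mu)([b_j])$, i.e. the $b_j$-th component of $\vec v(\sigma^\cal M(\mu))$. Since the two vectors thus agree in every component indexed by $b_j \in \cal B$, they coincide, and the matrix equality (\ref{more-formally}) follows.

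The only point requiring any care --- and the closest thing to an obstacle here --- is the consistency of the indexing convention: one must verify that $M(\sigma)$ is indexed with rows by $\cal B$ and columns by $\cal A$, so that $M(\sigma)$ genuinely carries $\R_{\geq 0}^\cal A$ to $\R_{\geq 0}^\cal B$ and the product $M(\sigma) \cdot \vec v(\mu)$ is well-typed, matching the placement of $|\sigma(a_k)|_{b_j}$ at position $(j,k)$ fixed in (\ref{incidence-m}). Once this alignment with the summation index in (\ref{image-measure3}) is checked, no computation remains.
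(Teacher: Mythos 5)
Your proof is correct and follows exactly the paper's route: the paper derives Proposition \ref{22.7} as a direct observation from formula (\ref{image-measure3}) of Remark \ref{letter-image-frequency}, which is precisely the componentwise identity you unpack, with the same indexing convention $(j,k) \mapsto |\sigma(a_k)|_{b_j}$ from (\ref{incidence-m}). Nothing is missing; your attention to the row/column alignment of $M(\sigma)$ is the only nontrivial check, and you handled it correctly.
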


%%%%%%%%%%%%%%

\section{Shift-orbit injectivity and related notions}
\label{sec:5d}

In this section we will establish a natural criterion which guaranties that the measure transfer map $\sigma^\cal M$ is $1-1$, when restricted to measures which are supported by suitable subshifts. We first need to recall and specify the notation from Section \ref{sec:2.3}:  

\begin{defn}
\label{5n.1}
Let $\sigma: \cal A^* \to \cal B^*$ be a non-erasing monoid morphism, and let $X \subset \cal A^\Z$ be any subshift.
\begin{enumerate}
\item
We say that $\sigma$ is {\em shift-orbit injective in $X$} if the map $\sigma^T$ restricted to the shift-orbits of $X$ is injective. 
\item
We say that $\sigma$ is {\em shift-period preserving in $X$} if $\sigma$ preserves the shift-period for every periodic orbit in $X$ (see Definition \ref{period-preserving}).

\end{enumerate}
\end{defn}

\begin{lem}
\label{5.x.d}
Let $\sigma': \cal A^* \to \cal B^*$ and $\sigma'': \cal B^* \to \cal C^*$ be two non-erasing morphisms
(so that the composition 
$\sigma:= \sigma'' \circ \sigma'$ is 
also non-erasing.) 
For any subshift $X \subset \cal A^\Z$ consider 
the image subshift $Y = \sigma(X)$.
Then we have:
\begin{enumerate}
\item
The map 
$\sigma$ is shift-orbit injective in $X$ if and only if $\sigma'$ is shift-orbit injective in $X$ and $\sigma''$ is shift-orbit injective in $Y$.
\item
The map 
$\sigma$ is shift-period preserving in $X$ if and only if $\sigma'$ is shift-period preserving in $X$ and $\sigma''$ is shift-period preserving in $Y$.

\end{enumerate}
\end{lem}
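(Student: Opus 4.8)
The plan is to reduce both equivalences to the functoriality of the induced leaf–space map, namely $\sigma^T = {\sigma''}^T \circ {\sigma'}^T$ (Remark \ref{composition}), combined with the surjectivity statements already at our disposal. Writing $Z := \sigma'(X) \subset \cal B^\Z$, so that $Y = \sigma''(Z)$ by functoriality (Remark \ref{4.3}(1)), I record once and for all the purely set-theoretic fact that drives part (1): if $f : U \to V$ and $g : V \to W$ are \emph{surjective} maps, then $g \circ f$ is injective if and only if both $f$ and $g$ are injective. The forward direction uses surjectivity of $f$ to realise an arbitrary collision $g(v_1) = g(v_2)$ as $g(f(u_1)) = g(f(u_2))$ and then conclude $v_1 = v_2$; the converse is the usual fact that a composite of two injections is injective.

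For part (1), I would apply this fact to the two restricted leaf–space maps
\[
{\sigma'}_X^T : X/\langle T\rangle \to Z/\langle T\rangle
\quad\text{and}\quad
{\sigma''}_Z^T : Z/\langle T\rangle \to Y/\langle T\rangle,
\]
whose composite is $\sigma_X^T$ by Remark \ref{composition}. Both maps are surjective by Lemma \ref{closure-not}(2), so the set-theoretic fact applies verbatim: $\sigma_X^T$ is injective exactly when both ${\sigma'}_X^T$ and ${\sigma''}_Z^T$ are. By Definition \ref{5n.1}(1) this is precisely the statement that $\sigma$ is shift-orbit injective in $X$ iff $\sigma'$ is shift-orbit injective in $X$ and $\sigma''$ is shift-orbit injective in $Z = \sigma'(X)$. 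I expect this part to be routine; the only genuine input is the surjectivity supplied by Lemma \ref{closure-not}.

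For part (2), I would first translate shift-period preservation into a statement about primitive roots: by Definition \ref{period-preserving}, $\sigma$ preserves the shift-period of the periodic orbit with primitive root $w_0 \in \cal A^*$ iff $\sigma(w_0)$ is not a proper power. Given such an orbit of $X$, write $\sigma'(w_0) = v^s$ with $v$ primitive; then the image orbit in $Z$ has primitive root $v$ and $\sigma(w_0) = \sigma''(v)^s$. Since $\sigma''$ is non-erasing, $\sigma''(v)$ is non-empty, so $\sigma(w_0)$ is a proper power as soon as $s \geq 2$. This at once yields the implication ``$\Leftarrow$'' (if $\sigma'(w_0)$ and $\sigma''(v)$ are both primitive, then $s=1$ and $\sigma(w_0)=\sigma''(v)$ is primitive) and the ``$\sigma'$-half'' of ``$\Rightarrow$'' (if $\sigma(w_0)$ is primitive then $s=1$, so $\sigma'(w_0)$ is primitive).

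The main obstacle is the remaining ``$\sigma''$-half'' of ``$\Rightarrow$'': deducing that $\sigma''$ preserves the shift-period of \emph{every} periodic orbit of $Z = \sigma'(X)$ from the hypothesis that $\sigma$ does so for every periodic orbit of $X$. Here the mechanism of part (1) is not available for free, because what one now needs is surjectivity of ${\sigma'}_X^T$ \emph{restricted to periodic orbits} — that is, every periodic orbit of $Z$ should be the image of a periodic orbit of $X$. The natural approach is a phase/pigeonhole argument: given a periodic point ${v'}^{\pm\infty} \in Z$ of period $p = |v'|$, Lemma \ref{closure-not}(1) realises it as ${\sigma'}^\Z({\bf x}_0)$ for some ${\bf x}_0 \in X$, and recording for each index $i$ the pair (the letter $x_i$, the starting phase of $\sigma'(x_i)$ modulo $p$) in the finite set $\cal A \times \Z/p\Z$ produces, by pigeonhole, a factor $u$ of ${\bf x}_0$ with $\sigma'(u)$ equal to a power of a cyclic rotation of $v'$. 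The delicate point — and the step I expect to require genuine care — is that the resulting periodic word $u^{\pm\infty}$ need not lie in $X$, since its higher powers need not be factors of any point of $X$; one therefore cannot simply feed it into the hypothesis. Controlling this lifting of periodic orbits back into $X$ is exactly where the real content of part (2) sits, and it is the step on which the whole equivalence hinges.
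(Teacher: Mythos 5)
Your part (1) is correct and is exactly the paper's argument: the paper likewise combines the surjectivity of the induced orbit maps (Lemma \ref{closure-not}(2)) with the set-theoretic fact that a composition of surjections is injective iff both factors are; your reading of the lemma's $Y=\sigma(X)$ as the intermediate subshift $\sigma'(X)$ is also the correct one (it is what the application in Theorem \ref{5n.3} requires).

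Part (2) is where your proposal is incomplete, and the step you isolate as ``the step on which the whole equivalence hinges'' is not merely delicate --- it is impossible, because that implication is false as stated. Take for $X\subset\{a,b\}^\Z$ any aperiodic subshift (e.g.\ the Thue--Morse subshift), and let $\sigma'\colon a\mapsto c,\ b\mapsto c$ and $\sigma''\colon c\mapsto dd$. Since $X$ has no periodic orbits at all, $\sigma=\sigma''\circ\sigma'$ and $\sigma'$ are vacuously shift-period preserving in $X$; but $Z:=\sigma'(X)=\{c^{\pm\infty}\}$ consists of a periodic orbit with primitive shift-period $c$, and $\sigma''(c)=d^2$ is a proper power, so $\sigma''$ is not shift-period preserving in $Z$. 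This is precisely the phenomenon you flagged: a periodic orbit of $\sigma'(X)$ need not be the image of a periodic orbit of $X$, and the hypothesis on $\sigma$ constrains nothing else; consequently no pigeonhole or lifting argument can close the gap. (Your proof of the ``if'' direction and of the $\sigma'$-half of ``only if'' via primitive roots is fine.)

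For comparison, the paper's own proof of (2) is a one-line appeal to the ``trivial observation'' that $\sigma(w)$ is a proper power iff one of $w$, $\sigma'(w)$, $\sigma''(\sigma'(w))$ is; applied to periodic orbits of $X$ this yields exactly what you proved and nothing more --- it silently assumes that every periodic orbit of $\sigma'(X)$ lifts to one of $X$. So you have in effect uncovered a gap in the paper's lemma, not just in your own attempt. The statement does become true (and your lifting strategy then works) if one adds the hypothesis that $\sigma$ is shift-orbit injective in $X$: by part (1), $\sigma'$ is then shift-orbit injective in $X$, and the argument of Lemma \ref{8n.1} (whose proof does not actually need $\sigma$ letter-to-letter) shows that any ${\bf x}\in X$ mapping onto a periodic orbit of $Z$ is itself periodic, so periodic orbits of $Z$ do lift. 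Note finally that only part (1) of this lemma is invoked later (in Theorem \ref{5n.3}), so nothing downstream in this paper is affected by this issue.
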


\begin{proof}
Using the surjectivity from Lemma \ref{closure-not} (2), we obtain the first of these statements as direct application of the fact that for any two composable surjective maps $f$ and $g$ the composition $g \circ f$ is injective if and only if both maps $f$ and $g$ are injective. 
Statement (2) follows directly from the 
trivial 
observation that $\sigma(w)$ is a proper power (see (\ref{eq:prop-pow})) if and only if one of the three, $w, \sigma'(w)$ or $\sigma''(\sigma'(w))$ is a proper power.
\end{proof}

Next we consider a 
subdivision length function $\ell$ and the associated subdivision morphism $\pi_\ell: \cal A^* \to \cal A_{\ell}^*$ as in Subsection \ref{sec:subdivision-morphs}. 

\begin{lem}
\label{5n.2}
Any 
subdivision morphism $\pi_\ell: \cal A^* \to \cal A_\ell^*$ is both, shift-orbit injective and shift-period preserving in the full shift $\cal A^\Z$.
\end{lem}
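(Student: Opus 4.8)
The plan is to observe that, specialized to the full shift $X = \cal A^\Z$, the two assertions of the lemma are exactly parts (3) and (4) of Lemma \ref{3.2}, so that the whole proof consists of unwinding Definition \ref{5n.1} and invoking that earlier result. First I would recall that Definition \ref{5n.1}(1) declares $\pi_\ell$ to be shift-orbit injective in $\cal A^\Z$ precisely when the induced map $\pi_\ell^T$, restricted to the shift-orbits of $\cal A^\Z$, is injective; and that Definition \ref{5n.1}(2) declares $\pi_\ell$ to be shift-period preserving in $\cal A^\Z$ precisely when it preserves the shift-period of every periodic orbit lying in $\cal A^\Z$.

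For the first assertion, the shift-orbits of the full shift are \emph{all} shift-orbits over $\cal A$, so shift-orbit injectivity in $\cal A^\Z$ is nothing other than injectivity of the map $\pi_\ell^T$ itself, which is Lemma \ref{3.2}(3). For the second assertion, every periodic biinfinite word over $\cal A$ already lies in the full shift $\cal A^\Z$, so shift-period preservation in $\cal A^\Z$ is exactly the statement that $\pi_\ell$ preserves the shift-period of every biinfinite periodic word, which is Lemma \ref{3.2}(4). Combining the two reductions yields the lemma.

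The key point underlying Lemma \ref{3.2} — and hence the only thing one would actually have to verify were a self-contained argument wanted — is the unique decodability of the subdivision: every word in the image $\pi_\ell(\cal A^*)$ is uniquely cut into blocks $a_i(1) \ldots a_i(\ell(a_i))$ by locating the block-start letters $a_i(1)$, so the original word, and thus the original shift-orbit, can be read back off. For shift-period preservation one uses the same block structure: if $\pi_\ell(w) = v^r$ with $r \geq 2$, then the image word inherits the period $|v|$, and since the set of block-start positions is invariant under the shift by $|v|$, this length $|v|$ must be an integral number $k$ of blocks with $rk = |w|$; one then reads off that $w$ is the $r$-th power of its length-$k$ prefix, i.e. a proper power. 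Because all of this is already recorded in Lemma \ref{3.2}, there is no genuine obstacle here; the expected difficulty is essentially nil, the proof being a one-line reduction to the previously established properties of subdivision morphisms.
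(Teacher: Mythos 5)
Your proof is correct and takes essentially the same route as the paper: the paper's own argument simply re-derives, inline, the injectivity of $\pi_\ell^\Z$ (hence of $\pi_\ell^T$ on orbits) and the fact that $\pi_\ell(w)$ is a proper power only if $w$ is --- which is exactly the content of Lemma \ref{3.2} (3) and (4) that you cite instead. Your observation that, for the full shift, the lemma is just a restatement of those two items in the terminology of Definition \ref{5n.1}, backed by the unique-decodability of the block structure, is precisely the intended argument.
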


\begin{proof}
In order to see that $\pi_\ell$ is shift-orbit injective in the full shift we only need to observe that $\pi_\ell^\Z: \cal A^\Z \to \cal A_\ell^\Z$ is injective, and that two biinfinite words from $\pi_\ell^\Z(\cal A^\Z)$ belong to the same shift-orbit if and only if their preimages in $\cal A^\Z$ belong to the same orbit.

Similarly, from the definition of $\pi_\ell$ it follows directly that the image of any $w \in \cal A^*$ is a proper power if and only $w$ itself is a proper power. This implies directly (see Definition \ref{period-preserving}) that $\pi_\ell$ is shift-period preserving in the full shift.
\end{proof}

\begin{lem}
\label{4.12+}
For any subdivision morphism $\pi_{\ell}: \cal A^* \to \cal A_{\ell}^*$ 
the induced 
transfer map $\pi_\ell^\cal M: \cal M(\cal A^\Z) \to \cal M(\cal A_{\ell}^\Z)\, , \,\, \mu \mapsto \mu_{\ell}$ on the measure cones is injective.
\end{lem}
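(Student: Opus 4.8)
The plan is to exhibit an explicit left inverse of $\pi_\ell^\cal M$ by showing that the original measure $\mu$ can be read off directly from its transfer $\mu_\ell$. Concretely, I would establish the \emph{recovery formula}
$$\mu_\ell(\pi_\ell(w)) = \mu(w) \qquad \text{for every } w \in \cal A^*.$$
Granting this, injectivity is immediate: if $\mu_\ell = \nu_\ell$ for two invariant measures $\mu, \nu \in \cal M(\cal A^\Z)$, then for every $w \in \cal A^*$ one gets $\mu(w) = \mu_\ell(\pi_\ell(w)) = \nu_\ell(\pi_\ell(w)) = \nu(w)$, and since an invariant measure is determined by its weight function $w \mapsto \mu([w])$ (see Section \ref{sec:2.1}), this forces $\mu = \nu$.

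To obtain the recovery formula I would unwind the definition of the subdivision measure. By Definition \ref{3.3} one has $\mu_\ell(u) = \mu(\hat u)$, where $\hat u \in \cal A^*$ is the shortest word such that $\pi_\ell(\hat u)$ contains $u$ as a factor. Applying this with $u = \pi_\ell(w)$, it suffices to check that $\hat u = w$, i.e. that $w$ is the shortest word in $\cal A^*$ whose $\pi_\ell$-image contains $\pi_\ell(w)$ as a factor. Since $\pi_\ell(w)$ is trivially a factor of $\pi_\ell(w)$, the word $w$ is at least a candidate, and by the uniqueness clause in Definition \ref{3.3} it is enough to know it is a shortest one. Alternatively, this recovery formula is exactly the equality case of Lemma \ref{3.6}(e) for the subdivision morphism $\sigma = \pi_\ell$, so one may simply invoke that statement and skip directly to the conclusion.

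The only real content is therefore the claim $\hat u = w$, which is nothing but the recognizability of the subdivision morphism already exploited in Lemma \ref{3.2}. I would argue that any occurrence of $\pi_\ell(w)$ as a factor of some $\pi_\ell(v)$ is forced to be ``block-aligned'': writing $w = x_1 \cdots x_n$, the word $\pi_\ell(w)$ begins with the marker letter $x_1(1)$, and in any image $\pi_\ell(v)$ a letter of the form $a_i(1)$ occurs only at the beginning of one of the blocks $\pi_\ell(v_j)$. Hence the occurrence must start at a block boundary with $v_j = x_1$, and matching the blocks one after another then forces $v_j = x_1,\, v_{j+1} = x_2,\, \ldots,\, v_{j+n-1} = x_n$; in particular $w$ is a factor of $v$, so $|v| \geq |w|$, with equality only for $v = w$. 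This shows that $w$ is the unique shortest preimage, i.e. $\hat u = w$.

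The main (and only mildly delicate) obstacle is making this block-alignment precise — in particular verifying that the match consumes exactly full blocks and terminates at a block boundary, so that no shorter word $v$ can do the job. This is the same elementary observation underlying the injectivity statements of Lemma \ref{3.2}, and it carries no further difficulty; everything else in the argument is a one-line consequence of the definitions.
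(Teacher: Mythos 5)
Your proof is correct and takes essentially the same route as the paper: the paper's own argument also rests on the recovery formula $\mu_\ell(\pi_\ell(w)) = \mu(w)$, read off from Definition \ref{3.3}, together with the fact that an invariant measure is determined by its weight function. Your block-alignment argument simply makes explicit the step that the paper compresses into ``we see directly from the definition of the subdivision measure,'' so it adds detail rather than a different method.
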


\begin{proof}
Two measures $\mu, \mu' \in \cal M(\cal A^\Z)$ are distinct if and only there exists a word $w \in \cal A^*$ where the 
associated weight functions satisfy $\mu(w) \neq \mu'(w)$. 
From the definition of the subdivision measure in Definition \ref{3.3} we see directly that $\mu_{\ell}(\pi_{\ell}(w)) = \mu(w)$ and $\mu'_{\ell}(\pi_{\ell}(w)) = \mu'(w)$. It follows that $\mu_{\ell} \neq \mu'_{\ell}$.
\end{proof}

We can now observe:

\begin{lem}
\label{ergodic-new}
Let $\sigma: \cal A^* \to \cal B^*$ be a non-erasing monoid morphism, let $X \subset \cal A^\Z$ be any subshift, and let 
$\mu \in \cal M(X)$ be an ergodic measure. Then $\mu^\sigma$ is an ergodic measure on the image subshift $\sigma(X)$.
\end{lem}

\begin{proof}
As before, we write $\sigma$ as composition $\sigma =\alpha_\sigma \circ \pi_\sigma$ of the subdivision morphism $\pi_\sigma$ and the letter-to-letter morphism $\alpha_\sigma\, $. From Lemma \ref{4.12+} we know that the induced map $\pi_\sigma^\cal M: \cal M(X) \to \cal M(\pi_\sigma(X))$ is injective and hence, by Lemma \ref{3.6} (a) and the surjectivity from Proposition 4.4 of \cite{BHL2.8-II}, an $\R_{\geq 0}$-linear isomorphism of cones. It follows that any extremal point of the cone $\cal M(X)$ is mapped to an extremal point of the cone $\cal M(\pi_\sigma(X))$. But a measure is ergodic if and only if it is an extremal point of the corresponding measure cone, so that, for any ergodic measure $\mu \in \cal M(X)$ the transferred  measure $\mu^{\pi_\sigma}$ is also ergodic.

We now use the well known - and easy to prove - fact (see Proposition 7.9 of \cite{Tatjana}), that for any ergodic measure the push-forward measure with respect to any factor map (such as our letter-to-letter morphism $\alpha_\sigma$) is again ergodic, in order to conclude that  the measure $(\mu^{\pi_\sigma})^{\alpha_\sigma} = \mu^\sigma$ is ergodic.
\end{proof}

We can now start the proof of the main result of this paper; its core (Theorem \ref{8n.5}) will however be delayed until the next section.

\begin{thm}
\label{5n.3}
Let $\sigma: \cal A^* \to \cal B^*$ be a non-erasing morphism of free monoids on finite alphabets, and let $X \subset \cal A^\Z$ be a subshift, 
with image subshift $\sigma(X)$.

If $\sigma$ is shift-orbit injective in $X$, then the measure transfer map $\sigma_X^\cal M: \cal M(X) \to \cal M(\sigma(X))$ is injective.
\end{thm}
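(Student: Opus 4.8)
The plan is to reduce the theorem to the special case of a \emph{letter-to-letter} morphism by exploiting the canonical decomposition $\sigma = \alpha_\sigma \circ \pi_\sigma$ of Definition-Remark \ref{3.5}, in which $\pi_\sigma$ is a subdivision morphism and $\alpha_\sigma$ is letter-to-letter. First I would invoke the functoriality of the (restricted) measure transfer (Proposition \ref{1.3}(4); see also Lemma \ref{3.6}(b) and Lemma \ref{3.7d}) to factor the map under consideration as
$$\sigma_X^\cal M \,\, = \,\, (\alpha_\sigma)_{\pi_\sigma(X)}^\cal M \circ (\pi_\sigma)_X^\cal M \, .$$
Because a composition of injective maps is injective, it then suffices to prove that each of the two factors is injective.

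The subdivision factor is disposed of at once: by Lemma \ref{4.12+} the map $\pi_\sigma^\cal M$ is injective on all of $\cal M(\cal A^\Z)$, so its restriction/co-restriction $(\pi_\sigma)_X^\cal M$ to measures supported on $X$ is injective as well. For the remaining letter-to-letter factor I would first check that the shift-orbit injectivity hypothesis descends to $\alpha_\sigma$ on the intermediate subshift $\pi_\sigma(X) \subset \cal A_\sigma^\Z$. This is immediate from Lemma \ref{5.x.d}(1): applied to the decomposition $\sigma = \alpha_\sigma \circ \pi_\sigma$, the assumption that $\sigma$ is shift-orbit injective in $X$ forces $\alpha_\sigma$ to be shift-orbit injective in $\pi_\sigma(X)$ (the subdivision morphism $\pi_\sigma$ being shift-orbit injective anyway, by Lemma \ref{5n.2}).

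Thus everything comes down to the statement that, for a letter-to-letter morphism $\alpha_\sigma$ that is shift-orbit injective on a subshift, the transfer $(\alpha_\sigma)_{\pi_\sigma(X)}^\cal M$ is injective. This is the genuine core of the theorem and I expect it to be the main obstacle, which is why it is isolated as Theorem \ref{8n.5} and its proof deferred to the next section. The difficulty is that the push-forward formula (\ref{eq3.2}) collapses all letters of a single fibre $\alpha_\sigma^{-1}(b_j)$ into one summand, so recovering $\mu$ from its image requires extracting, from the values $(\alpha_\sigma)_*(\mu)(\,\cdot\,)$ alone, enough information to separate the contributions of distinct orbits of $X$; here the shift-orbit injectivity of $\alpha_\sigma$ on $X$ is precisely what must be leveraged. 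What makes this delicate is that shift-orbit injectivity is \emph{strictly weaker} than recognizability (cf. Remark \ref{8n.7} and the discussion around Proposition \ref{6.3}), so the standard recognizability-based reconstruction of a measure from its image is unavailable; in particular the periodic orbits, where $\sigma^\Z$ itself may fail to be injective or shift-period preserving (compare Remark \ref{2.7d}), must be analysed separately.
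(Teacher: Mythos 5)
Your proposal is correct and follows essentially the same route as the paper: the canonical decomposition $\sigma = \alpha_\sigma \circ \pi_\sigma$, functoriality of the transfer, Lemma \ref{4.12+} for the subdivision factor, Lemma \ref{5.x.d}(1) together with Lemma \ref{5n.2} to pass shift-orbit injectivity to $\alpha_\sigma$, and the letter-to-letter core isolated as Theorem \ref{8n.5} — exactly as in the paper, whose proof likewise defers that core statement to the following section. (Your subscript $(\alpha_\sigma)_{\pi_\sigma(X)}^{\cal M}$ is in fact the more precise one, since the intermediate subshift is $\pi_\sigma(X) \subset \cal A_\sigma^\Z$, whereas the paper's proof writes $(\alpha_\sigma)_{\sigma(X)}^{\cal M}$.)
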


\begin{proof}
We consider the canonical decomposition $\sigma = \alpha_\sigma \circ \pi_\sigma$ from equality (\ref{eq:can-dec}) and obtain from the functoriality of the measure transfer (see Lemma \ref{3.6} (b)) the decomposition $\sigma_X^\cal M = (\alpha_\sigma)_{\sigma(X)}^\cal M \circ (\pi_\sigma)_X^\cal M$. 

From Lemma \ref{5.x.d} (1) and Lemma \ref{5n.2} we obtain directly that the morphism $\alpha_\sigma$ is shift-orbit injective. We can thus apply Lemma \ref{4.12+} to $(\pi_\sigma)_X^\cal M$ and Theorem \ref{8n.5} to $(\alpha_\sigma)_{\sigma(X)}^\cal M$ to deduce that $\sigma_X^\cal M$ is injective.
\end{proof}

We will terminate this section with a discussion that compares the above introduced notions to the more frequently used notions of morphisms that are ``recognizable'' or a ``recognizable for aperiodic points" (see Fig.1). For the convenience of the reader we briefly recall the definitions:

\begin{defn}
\label{5n.recog}
Let $\sigma:\cal A^* \to \cal B^*$ be a non-erasing morphism, and let $X \subset \cal A^\Z$ be a subshift over $\cal A$. 

\smallskip
\noindent
(1)
Then $\sigma$ is 
said\,\footnote{ 
Some authors say ``recognizable on $X$''.} to be 
{\em recognizable in $X$} if the following conclusion is true:

\noindent
Consider biinfinite words ${\bf x}, {\bf x'} \in X \subset \cal A^\Z$ and ${\bf y} \in \cal B^\Z$ which satisfy  
\begin{enumerate}
\item[(*)]
${\bf y} = T^k(\sigma^\Z({\bf x}))$ and ${\bf y} = T^\ell(\sigma^\Z({\bf x'}))$ for some integers $k, \ell$ which satisfy $0 \leq k \leq |\sigma(x_1)|-1$  and $ 0 \leq  \ell \leq |\sigma(x'_1)|-1$, where $x_1$ and $x'_1$ are the first letters of the 
positive half-words 
${\bf x_{[1, \infty)}} = x_1  x_2 \ldots$ of ${\bf x}$ and ${\bf x'_{[1, \infty)}} = x'_1 x'_2 \ldots$ of ${\bf x'}$ respectively.
\end{enumerate}
Then one has ${\bf x} = \bf x'$ and $k = \ell$.

\smallskip
\noindent
(2)
The morphism $\sigma$ is called {\em recognizable for aperiodic points in $X$} if the same conclusion as in (1) is true, but under the strengthened hypothesis that in addition $\bf y$ is assumed not to be a periodic word.
\end{defn}

\begin{warning}
\label{Warning}
The terminology ``recognizable for aperiodic points in $X$'' should not be misunderstood as to be referring to aperiodic words in $X$: it really does concern aperiodic words in 
the image subshift $\sigma(X)$. 
The alternative wording ``recognizable in $X$ for aperiodic points of $\sigma(X)$'' would be more accurate, but we prefer here to stick to the established terminology in the literature.
\end{warning}

%%%%%%%%%%%%%
\begin{figure}

\begin{center}
\begin{tikzpicture}

\node[rounded corners=6pt,fill=gray!10] (R) at (0,-2.5) {recognizable};
\node[rounded corners=6pt,fill=gray!10] (RA) at (6,-2.5) {$\begin{array}{c}
\text{recognizable} \\
\text{for aperiodic points}
\end{array}$};
\node[rounded corners=6pt,fill=gray!10] (O) at (-.5,.1) 
{$\begin{array}{c}
\text{shift-orbit injective} \\
\text{and} \\
\text{shift-period preserving}
\end{array}$};
\node[rounded corners=6pt,fill=gray!10] (OP) at (6,0) 
{shift-orbit injective};

\node[rounded corners=6pt,fill=gray!10] (RA) at (12,-1) {$\begin{array}{c}
\text{measure transfer} \\
\text{injective}
\end{array}$};

\node[rotate=90] at (0,-1.3) {$\Longleftrightarrow$};

\node at (2.7,-0.2) {$\Longrightarrow$};
\node[rotate=180] at (2.7,0.2) {$\Longrightarrow$};
\node[rotate=180] at (2.7,0.2) {$\!/$};

\node at (2.7,-2.7) {$\Longrightarrow$};
\node[rotate=180] at (2.7,-2.3) {$\Longrightarrow$};
\node[rotate=180] at (2.7,-2.3) {$\!/$};

\node[rotate=-90] at (5.8,-1.3) {$\Longrightarrow$};
\node[rotate=90] at (6.2,-1.3) {$\Longrightarrow$};
\node[rotate=90] at (6.2,-1.3) {$\!/$};

\node[rotate=-25] at (9,-0.4) {$\Longrightarrow$};
\node[rotate=155] at (8.9,-0.7) {$\Longrightarrow$};
\node[rotate=155] at (8.9,-0.7) {$\!/$};

\node[rotate=25] at (9,-2.1) {$\Longrightarrow$};
\node[rotate=25] at (9,-2.1) {$\!/$};
\node[rotate=205] at (9.1,-2.4) {$\Longrightarrow$};
\node[rotate=205] at (9.1,-2.4) {$\!/$};

\end{tikzpicture}
\end{center}
\caption{}
\end{figure}

%%%%%%%%%%%%

It turns out (see 
Proposition 3.8 of \cite{BHL2.8-II}) that for any non-erasing morphism 
$\sigma$ as above the condition ``recognizable in a subshift $X$'' is equivalent to the condition ``shift-orbit injective and shift-period preserving in $X$''. Indeed, a quick proof for the injectivity of the measure transfer map $\sigma_X^\cal M$ as in Theorem \ref{5n.3}, under the stronger hypothesis of ``recognizability in $X$'' is given in section 
3.3 of \cite{BHL2.8-II}. 

However, since recognizability can 
only be achieved for an everywhere growing $S$-adic development of a given subshift $X$ (for the terminology see for instance Section 2 of \cite{BHL2.8-II}) if $X$ is 
aperiodic, the 
much more popular hypothesis used by the $S$-adic community is not ``recognizable'' but ``recognizable for aperiodic points'', or ``eventually recognizable for aperiodic points''. 
The relation of this notion to the concepts introduced in this section 
(see Fig. 1) is given by the following; a formal proof 
is given in Remark 
3.10 (3) of \cite{BHL2.8-II}.

\begin{prop}
\label{6.3}
If a non-erasing morphism $\sigma:\cal A^* \to \cal B^*$ is shift-orbit injective in a subshift $X \subset \cal A^\Z$, then $\sigma$ is recognizable for aperiodic points in $X$.
\qed
\end{prop}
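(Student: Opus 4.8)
The plan is to verify the conclusion of Definition \ref{5n.recog}(1) directly, under its hypotheses together with the extra assumption that ${\bf y}$ is aperiodic. So I would start with biinfinite words ${\bf x}, {\bf x}' \in X$ and an aperiodic ${\bf y} \in \cal B^\Z$ satisfying condition (*), aiming to conclude ${\bf x} = {\bf x}'$ and $k = \ell$. The argument splits into two parts: first use shift-orbit injectivity (Definition \ref{5n.1}) to force ${\bf x}$ and ${\bf x}'$ into the same shift-orbit, and then use a length-bookkeeping computation, powered by the aperiodicity of ${\bf y}$, to show that the shift relating them is trivial.

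For the first part, condition (*) says that ${\bf y}$ is a shift of both $\sigma^\Z({\bf x})$ and $\sigma^\Z({\bf x}')$, so $\cal O(\sigma^\Z({\bf x})) = \cal O({\bf y}) = \cal O(\sigma^\Z({\bf x}'))$. By the definition of $\sigma^T$ this reads $\sigma^T(\cal O({\bf x})) = \sigma^T(\cal O({\bf x}'))$, and shift-orbit injectivity of $\sigma$ in $X$ yields $\cal O({\bf x}) = \cal O({\bf x}')$. Hence ${\bf x}' = T^m({\bf x})$ for some $m \in \Z$, and it remains only to prove $m = 0$.

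For the second part I would record the basic relation between shifting on the source and on the image. Writing ${\bf x} = \ldots x_0 x_1 x_2 \ldots$, one has $\sigma^\Z(T^m({\bf x})) = T^{s(m)}(\sigma^\Z({\bf x}))$, where $s(m) = |\sigma(x_1 \cdots x_m)|$ for $m \geq 1$, while $s(0) = 0$ and $s(m) = -|\sigma(x_{m+1} \cdots x_0)|$ for $m \leq -1$. Substituting ${\bf x}' = T^m({\bf x})$ into (*) gives ${\bf y} = T^k(\sigma^\Z({\bf x})) = T^{\ell + s(m)}(\sigma^\Z({\bf x}))$, so $T^{k - \ell - s(m)}(\sigma^\Z({\bf x})) = \sigma^\Z({\bf x})$. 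Since ${\bf y}$ is aperiodic and $\sigma^\Z({\bf x})$ is a shift of ${\bf y}$, the word $\sigma^\Z({\bf x})$ is itself aperiodic, so its $T$-stabilizer is trivial and therefore $s(m) = k - \ell$.

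To finish, note that $x'_1 = x_{m+1}$. The bounds in (*), namely $0 \leq k \leq |\sigma(x_1)|-1$ and $0 \leq \ell \leq |\sigma(x'_1)|-1$, give $s(m) = k - \ell \leq |\sigma(x_1)| - 1$ and $s(m) = k - \ell \geq -(|\sigma(x'_1)|-1) > -|\sigma(x'_1)|$. On the other hand $s(m) \geq |\sigma(x_1)|$ for every $m \geq 1$ and $s(m) \leq -|\sigma(x_{m+1})| = -|\sigma(x'_1)|$ for every $m \leq -1$; both possibilities contradict the displayed bounds, forcing $m = 0$, whence ${\bf x}' = {\bf x}$ and $k = \ell$. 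I expect the main obstacle to be clerical rather than conceptual: fixing the index conventions for $s(m)$ exactly right — in particular the identification $x'_1 = x_{m+1}$ and the sign of $s(m)$ for $m < 0$ — and matching the range of $s(m)$ against the asymmetric bounds on $k$ and $\ell$. The one genuinely essential idea is that aperiodicity of ${\bf y}$ kills the shift ambiguity, collapsing the orbit-level equality to the pointwise equality ${\bf x} = {\bf x}'$; this is precisely where the hypothesis ``for aperiodic points'' enters and cannot be removed.
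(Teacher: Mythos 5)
Your proof is correct, and it is worth noting that the paper itself gives no argument for this proposition at all: it is stated with an immediate \qed and the reader is referred to Remark 3.10\,(3) of \cite{BHL2.8-II} for a formal proof. Your argument therefore supplies a complete, self-contained proof where the paper only cites its companion. The two-step structure is exactly the natural one: condition (*) of Definition \ref{5n.recog} puts $\sigma^\Z({\bf x})$ and $\sigma^\Z({\bf x}')$ in the orbit $\cal O({\bf y})$, so shift-orbit injectivity (Definition \ref{5n.1}) forces ${\bf x}' = T^m({\bf x})$; then the displacement identity $\sigma^\Z(T^m({\bf x})) = T^{s(m)}(\sigma^\Z({\bf x}))$, with $s(m) = |\sigma(x_1 \cdots x_m)|$ for $m \geq 1$ and $s(m) = -|\sigma(x_{m+1} \cdots x_0)|$ for $m \leq -1$, is the correct bookkeeping under the paper's indexing convention for $\sigma^\Z$. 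Your use of aperiodicity is exactly where it must enter: since $\sigma^\Z({\bf x}) = T^{-k}({\bf y})$ is aperiodic, its $T$-stabilizer is trivial, giving $s(m) = k - \ell$; and the asymmetric bounds $k - \ell \leq |\sigma(x_1)| - 1$ and $k - \ell \geq -(|\sigma(x'_1)| - 1)$ are incompatible with $s(m) \geq |\sigma(x_1)|$ (for $m \geq 1$) and with $s(m) \leq -|\sigma(x_{m+1})| = -|\sigma(x'_1)|$ (for $m \leq -1$), using $x'_1 = x_{m+1}$. Hence $m = 0$, and then $s(0) = 0$ yields $k = \ell$. All index conventions and sign checks are consistent with the paper's setup, so nothing is missing; this is precisely the kind of argument the companion reference must contain, and your version could stand in the paper as a proof of Proposition \ref{6.3}.
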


The converse implication of Proposition \ref{6.3}
turns however 
out to be wrong: 
A very simple counterexample is given by the letter-to-letter morphism 
$\sigma_1: \{a, b\}^* \to \{c\}^*$ from Remark \ref{2.7d} and the subshift $X = \{a^{\pm \infty}, b^{\pm \infty}\}$.
Since $\{c\}^*$ consists only of periodic words, it follows (see Warning~\ref{Warning}) that $\sigma_1$ is automatically recognizable for aperiodic points in $X$. However, since $\sigma_1^T(\cal O( a^{\pm \infty})) = \cal O( c^{\pm \infty} )$ and $\sigma_1^T(\cal O( b^{\pm \infty} )) = \cal O( c^{\pm \infty} )$, the map $\sigma_1$ is not shift-orbit injective in $X$.

\smallskip

This example shows also that ``$\sigma$ recognizable for aperiodic points in $X$'' is in general too weak to be able to deduce that the measure transfer map $\sigma_X^\cal M : \cal M(X) \to \cal M(\sigma(X))$ is injective: For $\sigma_1$ as above one observes directly from Lemma \ref{3.6} (d) that $\sigma_1^\cal M(\mu_a) = \sigma_1^\cal M(\mu_b) = \mu_c$. 

We also observe that the injectivity of the measure transfer map $\sigma_X^\cal M$ does in general not imply the injectivity of the map $\sigma_X^T$ induced by $\sigma$ on the orbits of $X$, so that the converse of Theorem \ref{5n.3} does not hold in full generality. Indeed, we have:

\begin{rem}
\label{wrong-converse}
Let  $X \subset \{a, b\}^*$ be the countable subshift which consists of the two orbits defined by
$$\ldots aaa \ldots \qquad 
\text{and} \qquad
\ldots aaabaaa\ldots \, .$$
Set $ Y = \{c\}^*$, and consider again the morphism $\sigma_1$ from Remark \ref{2.7d}, given by $a \mapsto c$ and $b \mapsto c$. This map is not injective on the shift-orbits of $X$, but the map $(\sigma_1)_X^\cal M: \cal M(X) \to \cal M(Y)$ is injective, since
the only non-trivial invariant probability measure on $X$ is the atomic measure $\mu_a$ concentrated on the element $\ldots aaa \ldots$. This can be seen from a direct application of the Kirchhoff rules (\ref{Kirchhoff}). Alternatively one can use the fact that $X$ is the substitutive subshift associated to the substitution given by $a \mapsto a^2\, , \,\, b \mapsto aba$ and apply Corollary 3.5 (1) of \cite{BHL2}.
\end{rem}

More generally, if a subshift $X \subset \cal A^\Z$ contains a non-empty subset $X_0 \subset X$ which consists entirely of biinfinite words that lie outside the support of any invariant measure on $X$, then the violation of the injectivity of $\sigma^T$ on $X_0$ will not have any impact on the injectivity of the measure transfer map $\sigma^\cal M$, for any morphism $\sigma: \cal A^* \to \cal B^*$.
This is the basic principle which is now used in the proof of the following proposition, which shows that the injectivity of a map $\sigma_X^\cal M$ as above is in general too weak to deduce that $\sigma$ is recognizable for aperiodic points. It completes the set of implications and their refusals between the properties discussed here, as summarized in Fig. 1.

\begin{prop}
\label{last-implication}
There exist a subshift $X \subset \cal A^\Z$ and a morphism $\sigma: \cal A^* \to \cal B^*$ which is not recognizable for aperiodic points in $X$, while the induced measure transfer map $\sigma_X^\cal M: \cal M(X) \to \cal M(\sigma(X))$ is injective.
\end{prop}

\begin{proof}
We will give a concrete example of $X$ and $\sigma$ as claimed: Set $\cal A = \{a, b, c\}$ and $\cal B = \{d, e\}$, and let $\sigma: \cal A^* \to \cal B^*$ be the letter-to-letter morphism given by $\sigma(a) = d$ and $\sigma(b) = \sigma(c) = e$. Let now $X  \subset \cal A^*$ be the countable subshift which consists of the three orbits defined by
$$\ldots aaa \ldots \,\,\,\,\, ,
 \,\,\, \,\,
\ldots aaabaaa\ldots \qquad
\text{and} \qquad
\ldots aaacaaa\ldots \, .$$
As in Remark \ref{wrong-converse} the only invariant probability measure on $X$ is the characteristic measure $\mu_a\,$, so that the induced measure transfer map $\sigma_X^\cal M: \cal M(X) \to \cal M(\sigma(X))$ is automatically injective. On the other hand, the two biinfninite indexed words $\ldots aaa \cdot baaa\ldots $ and  $\ldots aaa \cdot caaa\ldots $ lie in distinct shift-orbits and have the same image  $\ldots ddd \cdot eddd\ldots \,$, which is aperiodic. It follows that $\sigma$ is not recognizable for aperiodic points in $X$. 
\end{proof}

\begin{rem}
\label{after-Arnaud}
There exist also stronger but slightly more intricate examples as given in the last proof, where the image subshift $\sigma(X)$ is completely aperiodic. They are based on the same construction principle, but the periodic orbit of $\ldots aaa \ldots $ is replaced by the minimal and uniquely ergodic subshift $X_\tau$ defined by any primitive substitution $\tau: \cal A^* \to \cal A^*$, which in addition we assume to possess a left-infinite word $V$ and a right-infinite word $W$ that are both fixed by $\tau$. Such a substitution is given for example by the square of the Fibonacci substitution $\tau(x) = xyx\,,\,\, \tau(y) = xy$ with $V = \ldots \tau^2(xy) \, \tau(xy) \, xy \, x$ and $W = x\, yx \,\tau(yx)\, \tau^2(yx) \ldots$.

One now considers three additional letters $a, b, c$ not contained in $\cal A$ and defines the letter-to-letter morphism $\sigma: (\{b, c\} \cup \cal A)^* \to (\{a\} \cup \cal A)^*$ by setting $\sigma(a_i) = a_i$ for any $a_i \in \cal A$ and $\sigma(b) = \sigma(c) = a$. We define $X = X_\tau \cup \cal O(V b W) \cup \cal O(V c W)$ and observe that the biinfinite indexed words $V \cdot b W$ and $V \cdot c W$ are sent by $\sigma$ both to the same image word $V \cdot a W$ which is not periodic, so that $\sigma$ is not recognizable for aperiodic points in $X$. On the other hand, $X$ is easily seen to be the subshift $X = X_{\tau'}$ associated to the substitution $\tau': (\{b, c\} \cup \cal A)^* \to (\{b, c\} \cup \cal A)^*$ which coincides with $\tau$ on $\cal A$ and sends $b$ and $c$ to $v b w$ and $v c w$ respectively, for an arbitrary non-empty finite suffix $v$ of $V$ and prefix $w$ of $W$. It then follows from Corollary 3.5 (1) of \cite{BHL2} that $X$ is uniquely ergodic, so that $\sigma_X^\cal M$ is automatically injective.
\end{rem}

\begin{rem}
\label{after-referee}
(1)
From the examples in the proof of Proposition 5.10 and from Remark 5.11 one may get the impression that the only possibility, for a subshift $X$ and a morphism $\sigma$  which is not recognizable for aperiodic points in $X$ while its associated measure transfer map $\sigma^\cal M_X$ is injective, can occur if there is no invariant measure $\mu \in\cal M(X)$ with $\Supp(\mu) = X$. This impression, however, is treacherous: In Example 4.3 of \cite{BSTY19} a uniquely ergodic infinite minimal subshift $X \subset \cal A^\Z$ (indeed the substitution subshift of a primitive substitution) is exhibited as well as a morphism $\sigma$ which is $2:1$ on shift-orbits throughout all of $X$.

\smallskip
\noindent
(2)
If one attempts to find additional conditions on $X$ and $\sigma$ which ensure that the two properties ``recognizable for aperiodic points'' and ``injectivity of the measure transfer map $\sigma_X^\cal M\,$'' become equivalent, one should be aware of the following:

If $\sigma$ is shift-orbit injective on $X$, then we have shown above that both properties are satisfied. If, on the other hand, two distinct shift-orbits $\cal O({\bf x}) \neq \cal O({\bf x'})$ of $X$ have the same $\sigma$-image, then we observe:
\begin{enumerate}
\item[(a)]
The first property is violated if and only if the image orbit $\cal O({\sigma(\bf x})) = \cal O(\sigma({\bf x'}))$ is not periodic. 
\item[(b)]
For the second property we consider the subshifts $\bar{\cal O({\bf x})}$ and $\bar{\cal O({\bf x'})}$ in $X$ which are generated by $\bf x$ and $\bf x'$ respectively. In order to produce a violation to this second property, we need that $\bar{\cal O({\bf x})} \neq \bar{\cal O({\bf x'})}$. If one has furthermore that their measure cones $\cal M(\bar{\cal O({\bf x})})$ and $\cal M(\bar{\cal O({\bf x'})})$ are distinct subcones of $\cal M(X)$, then indeed the second property is violated.
\end{enumerate}
\end{rem}

\begin{rem}
\label{special-subhifts}
To finish this section, let us again consider the totality of the implications and their refusals as presented in Fig.1, for subshifts on which we impose additional assumptions. As before, we always consider a morphism $\sigma: \cal A^* \to \cal B^*$ as well as subshifts $X \subset \cal A^\Z$ and $Y = \sigma(X) \subset \cal B^\Z$.
\begin{enumerate}
\item
If $X$ is minimal, then the four left-most properties in Fig.1 for $\sigma$ on $X$ are equivalent to each other, while the right-most property is a consequence, but the converse implication fails (as shown above in Remark \ref{after-referee} (1)).
\item
The same is true if the image subshift $\sigma(X)$ (and hence also $X$ itself) is aperiodic, since in that case the equivalence between
``$\sigma$ recognizable for aperiodic points in $X$'' and ``$\sigma$ recognizable in $X$'' comes for free.
\item
If $\sigma(X)$ is not aperiodic, one may add the assumption that $X$ has a dense orbit, or a dense positive half-orbit. 
However, neither of these assumptions suffices to turn any of the 6 refused implications in Fig.1 into an implication. 
To be specific, we point out examples to the following three implication-refusals; the other three follow by using the implications from Fig.1.
\begin{enumerate}
\item[(a)] ``shift-orbit injective'' $\,\,\Longrightarrow  \!\!\!\!\!\!\!\!\!/ \quad$  ``shift-orbit injective and shift-period preserving'':

\quad
The simplest example with dense positive half-orbit in $X$ is given by $X = \{a\}^\Z$ and $\sigma: \{a\}^* \to \{b\}^*$ with $\sigma(a) = b^2$. 

\quad
More interesting examples are given by taking $\tau: \cal A^* \to \cal A^*$ to be any primitive substitution on two or more letters, and set $X'$ to be the substitution subshift $X' = X_\tau$. One then ``perturbes'' $\tau$ into $\tau'$ by adding a new generator $a'$ to $\cal A$ to obtain $\cal A'$, and defines $\tau'(a_k) = \tau(a_k) a'$ for any $a_k \in \cal A$, as well as $\tau'(a') = a'^2$. Then the periodic orbit $a'^{\pm \infty}$ belongs to the substitution subshift $X := X_{\tau'}\,$, and any ${\bf x} \in X$ which is different from $a'^{\pm \infty}$ has positive half-orbit which is dense in $X$. Furthermore, $\tau'$ is shift-orbit injective on $X$, while the shift-period of $a'^{\pm \infty}$ is not preserved by $\tau'$.

\smallskip

\item[(b)] ``recognizable for aperiodic points'' $\,\, \Longrightarrow  \!\!\!\!\!\!\!\!\!/\quad$  ``measure transfer injective'':

\quad
We start with $\tau$ and $X'$ as above, but add now two letters $a'$ and $a''$ to $\cal A$ to get the alphabet $\cal A''$. We then define the substitution $\tau'': \cal A'' \to \cal A''$ by setting
$\tau''(a_k) = a'' \tau(a_k) a'$ for any $a_k \in \cal A$, as well as $\tau''(a') = a'^2$ and $\tau''(a'') = a''^2 $. 
Again, the periodic orbits $a'^{\pm \infty}$ and  $a''^{\pm \infty}$ belong to the substitution subshift $X := X_{\tau''}$, and any ${\bf x} \in X$ which is different from $a'^{\pm \infty}$ and $a''^{\pm \infty}$ has positive half-orbit which is dense in $X$. For $\cal A' = \cal A \cup \{a'\}$ we now define $\sigma: \cal A''^* \to \cal A'^*$ to be the identity on $\cal A^* \subset \cal A''^*$ and by setting $\sigma(a') = \sigma(a'') = a'$. Then $\sigma$ is recognizable for aperiodic points in $X$, and the two characteristic measures $\mu_{a'}$ and  $\mu_{a''}$ are distinct, but have the same image $\mu_{a'}$ under the measure transfer map $\sigma_X^\cal M$.

\smallskip

\item[(c)] ``measure transfer injective'' $\,\,\Longrightarrow  \!\!\!\!\!\!\!\!\!/\quad$  ``recognizable for aperiodic points'':

\quad
Here we can point to the infinite minimal substitution subshift $X$ and the 
2\,:\,1 morphism $\sigma$ from Example 4.3 of \cite{BSTY19} as cited above in Remark \ref{after-referee} (1), which has all the required properties.
\end{enumerate}
\end{enumerate}
\end{rem}

%%%%%%%%%%%%%%%%%%

\section{The injectivity of the measure transfer for letter-to-letter morphisms}
\label{sec:6d}

Throughout this section we assume that $\sigma: \cal A^* \to \cal B^*$ is a letter-to-letter morphism of free monoids, and that $X \subset \cal A^\Z$ is a given subshift. From Definition-Remark \ref{3.5} we observe directly that in this special case the induced measure transfer map $\sigma^\cal M$ is given by the classical push-forward map $\mu \mapsto \sigma_*(\mu)$.

\begin{lem}
\label{8n.1}
If $\sigma$ is shift-orbit injective in $X$, and if for some ${\bf x} \in X$ the image word $\sigma({\bf x})$ is periodic, then $\bf x$ is periodic too.
\end{lem}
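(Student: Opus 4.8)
The plan is to exploit the fact that a letter-to-letter morphism commutes with the shift (see the identity $T \circ \alpha = \alpha \circ T$ in Section \ref{sec:letter-to-letter}), so that $\sigma^\Z(T^n({\bf x})) = T^n(\sigma^\Z({\bf x}))$ for every $n \in \Z$. Write ${\bf y} := \sigma^\Z({\bf x})$; since ${\bf y}$ is periodic, its orbit $\cal O({\bf y})$ is a finite set. The first step is to identify the full preimage of $\cal O({\bf y})$ inside $X$. On one hand, the commutation relation gives $\sigma^\Z(T^n({\bf x})) = T^n({\bf y}) \in \cal O({\bf y})$, so $\cal O({\bf x}) \subseteq X \cap (\sigma^\Z)^{-1}(\cal O({\bf y}))$. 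On the other hand, if ${\bf z} \in X$ satisfies $\sigma^\Z({\bf z}) \in \cal O({\bf y})$, then $\sigma^T(\cal O({\bf z})) = \cal O(\sigma^\Z({\bf z})) = \cal O({\bf y}) = \sigma^T(\cal O({\bf x}))$, and so the shift-orbit injectivity of $\sigma$ in $X$ (Definition \ref{5n.1}) forces $\cal O({\bf z}) = \cal O({\bf x})$, i.e. ${\bf z} \in \cal O({\bf x})$. Hence
\[
X \cap (\sigma^\Z)^{-1}(\cal O({\bf y})) = \cal O({\bf x})\,.
\]

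Next I would turn this identification into a topological statement. The set $\cal O({\bf y})$ is finite, hence closed in $\cal B^\Z$; since $\sigma^\Z$ is continuous, $(\sigma^\Z)^{-1}(\cal O({\bf y}))$ is closed in $\cal A^\Z$, and intersecting with the closed set $X$ keeps it closed. Thus $\cal O({\bf x})$ is a closed subset of the compact space $\cal A^\Z$, hence compact. Everything therefore reduces to the purely topological claim that a shift-orbit which is compact must be finite.

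The main obstacle is exactly this last claim, because compactness of an orbit does \emph{not} follow from recurrence alone (aperiodic Sturmian orbits are recurrent), so a naive convergence argument does not suffice. I would settle it as follows: since $\cal O({\bf x})$ is closed and shift-invariant, it equals its own orbit closure and is therefore a subshift consisting of a single orbit, hence minimal. A minimal subshift is either finite (a single periodic orbit) or an infinite Cantor set, and in the latter case it is perfect and thus uncountable. But $\cal O({\bf x}) = \{T^n({\bf x}) \mid n \in \Z\}$ is countable, so the Cantor alternative is impossible and $\cal O({\bf x})$ must be finite. A finite orbit means $T^k({\bf x}) = {\bf x}$ for some $k \geq 1$, i.e. ${\bf x}$ is periodic, as desired. (Equivalently, one may argue by Cantor--Bendixson: an infinite compact countable metric space has an isolated point, and by $T$-homogeneity every point of $\cal O({\bf x})$ would then be isolated, making the orbit discrete and hence finite, contradicting infiniteness.)
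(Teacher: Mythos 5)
Your proof is correct, but it takes a genuinely different route from the paper's. The paper argues at the level of orbit closures: by Definition-Remark \ref{image-shift} and Lemma \ref{closure-not}, the closure $\overline{\cal O({\bf x})}$ is a subshift satisfying $\sigma(\overline{\cal O({\bf x})}) = \overline{\cal O(\sigma^\Z({\bf x}))}$; since $\sigma^\Z({\bf x})$ is periodic, the right-hand side is a single finite orbit, so shift-orbit injectivity forces $\overline{\cal O({\bf x})}$ to consist of the single orbit $\cal O({\bf x})$, and the paper then simply cites the ``well known and easy to prove'' fact that a closed shift-orbit is periodic. You instead identify $\cal O({\bf x})$ outright as the preimage $X \cap (\sigma^\Z)^{-1}(\cal O({\bf y}))$ --- using shift-orbit injectivity in exactly the same way the paper does, namely to rule out extraneous orbits mapping onto $\cal O({\bf y})$ --- and then obtain closedness from elementary point-set topology (continuous preimage of a finite set). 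Two remarks on the trade-offs. First, your appeal to the commutation $T \circ \sigma^\Z = \sigma^\Z \circ T$ is legitimate, since Section \ref{sec:6d} assumes $\sigma$ letter-to-letter throughout; but it is also inessential, because for a general non-erasing $\sigma$ one still has $\sigma^\Z(T^n({\bf x})) \in \cal O({\bf y})$ (with a shifted exponent), so your identification survives verbatim and your argument is in fact as general as the paper's. Second, where the paper leaves the implication ``closed orbit $\Rightarrow$ periodic'' to the reader, you actually prove it (via the minimal-subshift dichotomy, or alternatively Cantor--Bendixson plus $T$-homogeneity), correctly flagging that recurrence alone would not suffice; this makes your write-up more self-contained, at the cost of invoking slightly heavier background facts (perfect sets are uncountable) that the paper's citation sidesteps.
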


\begin{proof}
From the definition of the image subhift (see Definition-Remark \ref{image-shift} 
and Lemma \ref{closure-not})
it follows directly 
that for any biinfinite word ${\bf x} \in X$ the closure $\overline{\cal O({\bf x})}$ of its orbit $\cal O({\bf x})$ is a subshift which satisfies
$$\sigma(\overline{\cal O({\bf x})}) = \overline{\cal O(\sigma({\bf x}))} \, .$$
The claim is hence a direct consequence of the well known and easy to prove fact that the shift-orbit of any biinfinite word is closed if and only if the word is periodic.
\end{proof}

For any word $w \in \cal L(X)$ and any integer $n \geq 0$ let us consider the set $W_n(w)$ of both-sided prolongations of $w$ in $\cal L(X)$ by $n$ letters on each side, i.e.
\begin{equation}
\label{eq8a.1}
W_n(w) \,\, := \,\, \{ u w v  \in \cal L(X) \mid |u| = |v| = n \}\, .
\end{equation}
For any $w' = u w v \in W_n(w)$ we
now consider the image $\sigma(w')$ and its preimage set 
$\sigma^{-1}(\sigma(w')) \cap \cal L(X)$. We split the set $W_n(w)$ into a disjoint union,
\begin{equation}
\label{eq8a.2}
W_n(w) = U_n(w) 
\sqcup A_n(w) \, ,
\end{equation}
by setting
\begin{equation}
\label{eq8a.3}
w' \in U_n(w)  \,\, \Longleftrightarrow \,\, \sigma^{-1}(\sigma(w')) \cap \cal L(X) \subset W_n(w)
\end{equation}
and
\begin{equation}
\label{eq8a.4}
w' \in A_n(w)  \,\, \Longleftrightarrow \,\, \sigma^{-1}(\sigma(w')) \cap \cal L(X) \nsubseteq W_n(w) \, .
\end{equation}
In other words, for any $w' = u w v \in A_n(w)$ there exists a word $u' w'' v' \in \cal L(X)$ with $|u'|= |v'| = n$ and $|w''| = |w|$ such that $w'' \neq w$ and $\sigma(u' w'' v') = \sigma(u w v)$, 
while for any 
$w' = u w v \in U_n(w)$ any word $u' w'' v' \in \cal L(X)$ with $|u'|= |v'| = n$ and $\sigma(u' w'' v') = \sigma(u w v)$ one has $w'' = w$ (and thus $u' w'' v' \in W_n(w)$).

\medskip

We now fix some $w \in \cal L(X)$ and observe directly from the definition of the sets $A_n(w)$ 
that their elements define cylinders for which their union $[A_n(w)] := \bigcup\{[u w v] \mid uwv \in A_n(w)\} \subset X$ satisfies
$$T^{-n-1}([A_{n+1}(w)]) \,\,\subset \,\, T^{-n}([A_n(w)])\,.$$

\begin{rem}
\label{8n.1.5}
(1)
We denote by $A_\infty(w)$ the intersection of the nested family of the $T^{-n}([A_n(w)])$.
Since by definition we have $A_n(w) \subset \cal L(X)$ for all $n \geq 0$ we know that $A_\infty(w) \subset X$.

\smallskip
\noindent
(2)
For any invariant measure $\mu$ on $X$ and any $n \geq 0$ one has 
thus 
$\mu([A_{n+1}(w)]) \leq \mu([A_n(w)])$ and 
$$\lim \mu([A_n(w)]) = \mu(A_\infty(w)) \, .$$
\end{rem}

\begin{lem}
\label{8n.2}
If $\sigma$ is shift-orbit injective in $X$, then for any $w \in \cal L(X)$ the intersection $A_\infty(w)$ consists only of periodic words.
\end{lem}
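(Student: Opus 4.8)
The plan is to take an arbitrary ${\bf x} \in A_\infty(w)$ and manufacture from it a \emph{second} biinfinite word ${\bf x}' \in X$ with ${\bf x}' \neq {\bf x}$ but $\sigma^\Z({\bf x}') = \sigma^\Z({\bf x})$; shift-orbit injectivity together with Lemma \ref{8n.1} will then force ${\bf x}$ to be periodic. Set $k := |w|$ and, after applying a suitable shift, assume ${\bf x}_{[1,k]} = w$. Unwinding the definition of the nested intersection $A_\infty(w)$ (Remark \ref{8n.1.5}), membership ${\bf x} \in A_\infty(w)$ means that for every $n \geq 0$ the length-$(2n{+}k)$ factor of ${\bf x}$ obtained by prolonging $w$ by $n$ letters on each side lies in $A_n(w)$. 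Hence by (\ref{eq8a.4}) there is, for each $n$, a word $u'_n w''_n v'_n \in \cal L(X)$ with $|u'_n| = |v'_n| = n$, $|w''_n| = k$, $w''_n \neq w$, and $\sigma(u'_n w''_n v'_n) = \sigma({\bf x}_{[1-n,\, k+n]})$.

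Next I would turn these finite ``alternative preimages'' into a biinfinite word. Since each $u'_n w''_n v'_n$ lies in the factorial language $\cal L(X) = \bigcup\{\cal L({\bf z}) \mid {\bf z} \in X\}$, and since $X$ is shift-invariant, I can choose ${\bf z}_n \in X$ with $({\bf z}_n)_{[1-n,\, k+n]} = u'_n w''_n v'_n$, so in particular $({\bf z}_n)_{[1,k]} = w''_n$. As there are only finitely many words of length $k$, I pass to a subsequence along which $w''_n$ equals a fixed word $w'' \neq w$, and then, using compactness of $X \subset \cal A^\Z$, to a further convergent subsequence ${\bf z}_{n_j} \to {\bf x}' \in X$. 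By construction ${\bf x}'_{[1,k]} = w'' \neq w = {\bf x}_{[1,k]}$, so ${\bf x}' \neq {\bf x}$. Moreover, for each fixed coordinate $j$ and all large $n_j$ one has $j \in [1-n_j,\, k+n_j]$, so that, $\sigma$ being letter-to-letter, $\sigma(({\bf z}_{n_j})_j) = \sigma(x_j)$; since convergence in the product topology means eventual agreement on each coordinate, passing to the limit gives $\sigma^\Z({\bf x}') = \sigma^\Z({\bf x})$.

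Finally I would combine these facts. From $\sigma^\Z({\bf x}') = \sigma^\Z({\bf x})$ the two words have the same image orbit, so $\sigma^T(\cal O({\bf x}')) = \sigma^T(\cal O({\bf x}))$; shift-orbit injectivity of $\sigma$ in $X$ then yields $\cal O({\bf x}') = \cal O({\bf x})$, i.e. ${\bf x}' = T^m({\bf x})$ for some $m \in \Z$. Because ${\bf x}' \neq {\bf x}$ we must have $m \neq 0$. As $\sigma$ is letter-to-letter, $\sigma^\Z$ commutes with $T$, whence $\sigma^\Z({\bf x}) = \sigma^\Z({\bf x}') = \sigma^\Z(T^m({\bf x})) = T^m(\sigma^\Z({\bf x}))$; with $m \neq 0$ this exactly says that $\sigma^\Z({\bf x})$ is periodic. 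Lemma \ref{8n.1} now applies and forces ${\bf x}$ itself to be periodic, which is the assertion.

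The only genuinely delicate step is the compactness/diagonal extraction in the second paragraph: one must ensure that the finite alternative preimages are realized as factors of honest elements of $X$ (this is where factoriality of $\cal L(X)$ and shift-invariance of $X$ enter), that the central block $w''_n$ stabilizes to a single word $w'' \neq w$ guaranteeing ${\bf x}' \neq {\bf x}$, and that the equality of $\sigma$-images survives the limit. Everything else is a formal consequence of shift-orbit injectivity, the letter-to-letter hypothesis, and Lemma \ref{8n.1}.
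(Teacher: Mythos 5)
Your proof is correct and follows essentially the same route as the paper's: extract from the definition of $A_n(w)$ a sequence of alternative preimages with the same $\sigma$-image, pass to a limit to produce a second word ${\bf x}' \in X$ with ${\bf x}' \neq {\bf x}$ but $\sigma^\Z({\bf x}') = \sigma^\Z({\bf x})$, then use shift-orbit injectivity to force $\sigma^\Z({\bf x})$ to be fixed by a non-trivial shift (hence periodic), and conclude via Lemma \ref{8n.1}. The only difference is a harmless technical one: you realize the words $u'_n w''_n v'_n$ as factors of genuine elements of $X$ and use compactness of $X$, whereas the paper takes coordinatewise limits of auxiliary periodically-extended words ${\bf z}(n)$ and then checks that the limit lies in $X$ through its language.
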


\begin{proof}
For any biinfinite word ${\bf x} \in A_\infty(w)$ there exists, by definition of $A_\infty(w)$, a sequence of words $u_n, v_n, u'_n, v'_n, w'_n \in \cal L(X)$ which have the following properties:
\begin{enumerate}
\item
$|u_n| = |v_n| = |u'_n| = |v'_n| = n 
\quad \text{and } \quad |w'_n| = |w|$
\item
$u'_n w'_n v'_n \in \cal L(X)$ and $u_n w v_n \in \cal L(X)$
\item
$\sigma(u_n w v_n) = \sigma(u'_n w'_n v'_n)$
\item
$w'_n \neq w$
\item
$\lim w v_n = {\bf x}_{[1, +\infty)}$ and $\lim u_n = {\bf x}_{(-\infty , 0]}$ 
\end{enumerate}
The statement (5) needs a bit of interpretation: We think of the words $w v_n$ as being indexed from $1$ to $|w v_n|$ and of $u_n$ as being indexed from $-|u_n|+1$ to $0$, and we pass to the limit while keeping the indices fixed. In other words, statement (5) is equivalent to
\begin{enumerate}
\item[(5')]
$\lim {\bf y}(n) 
= {\bf x}$ in $\cal A^\Z$, where for any $n \geq 0$ the biinfinite word ${\bf y}(n)$ is defined through ${\bf y}_{[1, \infty)}(n) = w v_n^{+\infty}$ and ${\bf y}_{(\infty, 0]}(n) = u_n^{-\infty}$.
\end{enumerate}

We now proceed analogously with the sequence of words $w'_n v'_n$ (indexed from $1$ to $|w'_n v'_n|$) and the words $u'_n$ (indexed from $-|u'_n|+1$ to $0$),
and pass to a subsequence of the integers $n$ such that there exists a biinfinite ``limit word'' ${\bf z} \in \cal A^\Z$ which satisfies
\begin{enumerate}
\item[(6)]
$\lim w'_n v'_n = {\bf z}_{[1, +\infty)}$ and $\lim u'_n = {\bf z}_{(-\infty , 0]} \, .$
\end{enumerate}
Again, these limits are meant to be equivalent to the statement
\begin{enumerate}
\item[(6')]
$\lim {\bf z}(n) 
= {\bf z}$ in $\cal A^\Z$, where the ${\bf z}(n)  \in \cal A^\Z$ are defined via ${\bf z}_{[1, \infty)}(n) = w'_n {v'_n}^{+\infty}$ and ${\bf z}_{(\infty, 0]}(n) = {u'_n}^{-\infty}$.
\end{enumerate}
Since by property (2) we have $u'_n w'_n v'_n \in \cal L(X)$, for all indices $n$ in the limits (6) or (6'), we obtain ${\bf z} \in X$. Furthermore, from the above limit set-up we know that the factor ${\bf z}_{[1, |w|]}$ of $\bf z$ is equal to some $w'_n$, and thus (by property (4)) distinct from $w$. We thus deduce
$${\bf z} \neq {\bf x} \, .$$
On the other hand, we obtain from property (3) directly
$$\sigma({\bf z}) = \lim \sigma({\bf z}_n) = \lim \sigma({\bf y}_n) = \sigma({\bf x}) \, .$$
We now use the assumption that $\sigma$ is shift-orbit injective in $X$ to deduce that $\bf x$ and $\bf z$ are shift-translates of each other: There exists some integer $k \neq 0$ such that $T^k({\bf z}) = {\bf x}$. We thus have $T^k(\sigma({\bf x})) = T^k(\sigma({\bf z})) = \sigma(T^k({\bf z})) = \sigma({\bf x})$. But any biinfinite word which is equal to a non-trivial shift-translate of itself must be periodic. Hence $\sigma({\bf x})$ is periodic, and now we can employ Lemma \ref{8n.1} in order to deduce that the biinfinite word ${\bf x}$ is periodic.
\end{proof}

We denote by $\Per(X)$ the subset of all biinfinite periodic words in $X$. We observe that both, the countable set $\Per(X)$ and its complement $X \smallsetminus \Per(X)$ are measurable subsets of $\cal A^\Z$. 
Any measure $\mu \in \cal M(X)$ satisfies $\mu(\Per(X)) = 0$ if and only if $\mu$ is non-atomic. This gives rise to a 
decomposition
\begin{equation}
\label{eq8.5}
\mu \,\, = \,\, \mu^{per}  +  \mu^{na}
\end{equation}
of $\mu$ as sum of a non-atomic measure $\mu^{na}$ and a 
{\em periodic} measure $\mu^{per}$, 
by which we mean that $\mu^{per}$ is 
a countable or finite sum of scalar multiples of the characteristic measures $\mu_w$ from (\ref{eq:charac-m}).
Note that this decomposition is canonical, in that 
for any measurable set $B \subset X$ one has
\begin{equation}
\label{eq5n.unique}
\mu^{per}(B)  \,\, = \,\, \mu(B \cap \Per(X))  \quad \text{and} \quad  \mu^{na}(B) \,\,= \,\,\mu(B \cap (X \smallsetminus \Per(X))) \, .
\end{equation}

\begin{prop}
\label{8.3n}
Let $\mu$ and $\mu'$ be two invariant measures on $X$ 
which are both non-atomic.
Assume furthermore that $\sigma$ is shift-orbit injective.
Then one has
$$\sigma_*(\mu) = \sigma_*(\mu') \quad \Longrightarrow \quad\mu = \mu' \, .$$
\end{prop}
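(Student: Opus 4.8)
The plan is to show that $\mu([w])=\mu'([w])$ for every $w\in\cal A^*$, which by the bijection between invariant measures and weight functions forces $\mu=\mu'$. For $w\notin\cal L(X)$ both sides vanish by (\ref{eq:supp}), so I may fix $w\in\cal L(X)$ and work with the decomposition $W_n(w)=U_n(w)\sqcup A_n(w)$ from (\ref{eq8a.2}). First I would record that, for each fixed $n$, iterating the Kirchhoff equalities (\ref{Kirchhoff}) together with shift-invariance yields
$$
\mu([w])\;=\;\sum_{\{u w v\,:\,|u|=|v|=n\}}\mu([u w v])\;=\;\mu([W_n(w)])\;=\;\mu([U_n(w)])+\mu([A_n(w)])\, ,
$$
where words $uwv\notin\cal L(X)$ contribute $0$ and the cylinders $[uwv]$ are pairwise disjoint (equal length). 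The same identity holds verbatim for $\mu'$. Thus it suffices to control the two summands separately.

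The heart of the argument, and the step I expect to be the main obstacle, is to prove $\mu([U_n(w)])=\mu'([U_n(w)])$ using only the hypothesis $\sigma_*(\mu)=\sigma_*(\mu')$. Here I would group $U_n(w)$ by its $\sigma$-image: set $V_n:=\sigma(U_n(w))\subset\cal B^*$. The key combinatorial observation is that the defining property (\ref{eq8a.3}) of $U_n(w)$ makes the fibres over $V_n$ ``complete'': for any $\bar w\in V_n$ one has $\sigma^{-1}(\bar w)\cap\cal L(X)\subset U_n(w)$. Indeed, choosing some $w'\in U_n(w)$ with $\sigma(w')=\bar w$, any $w''\in\cal L(X)$ with $\sigma(w'')=\bar w$ lies in $W_n(w)$ by (\ref{eq8a.3}), and since $\sigma^{-1}(\sigma(w''))\cap\cal L(X)=\sigma^{-1}(\bar w)\cap\cal L(X)\subset W_n(w)$ it in fact lies in $U_n(w)$. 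Consequently, applying the push-forward formula (\ref{eq3.2}) fibre by fibre gives
$$
\mu([U_n(w)])\;=\;\sum_{\bar w\in V_n}\ \sum_{w''\in\sigma^{-1}(\bar w)\cap\cal L(X)}\mu([w''])\;=\;\sum_{\bar w\in V_n}\sigma_*(\mu)(\bar w)\, ,
$$
and identically for $\mu'$. Since $U_n(w)$ and $V_n$ are purely combinatorial objects (independent of the measure) and $\sigma_*(\mu)=\sigma_*(\mu')$, the two sums agree termwise, whence $\mu([U_n(w)])=\mu'([U_n(w)])$ for every $n$.

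Finally I would pass to the limit in $n$. Subtracting the two decompositions gives, for every $n$,
$$
\mu([w])-\mu'([w])\;=\;\bigl(\mu([U_n(w)])-\mu'([U_n(w)])\bigr)+\bigl(\mu([A_n(w)])-\mu'([A_n(w)])\bigr)\;=\;\mu([A_n(w)])-\mu'([A_n(w)])\, ,
$$
the left-hand side being independent of $n$. By Remark \ref{8n.1.5}(2) we have $\lim_n\mu([A_n(w)])=\mu(A_\infty(w))$ and $\lim_n\mu'([A_n(w)])=\mu'(A_\infty(w))$. Now Lemma \ref{8n.2} tells us that $A_\infty(w)$ consists only of periodic words, so $A_\infty(w)\subset\Per(X)$; since $\mu$ and $\mu'$ are assumed non-atomic we have $\mu(\Per(X))=\mu'(\Per(X))=0$, and hence both limits are $0$. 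Therefore $\mu([w])-\mu'([w])=0$, completing the proof. The only genuinely delicate point is the fibre-completeness used in the middle paragraph; everything else is a routine combination of the Kirchhoff equalities, the push-forward formula, and the non-atomicity hypothesis applied to the periodic residual set $A_\infty(w)$.
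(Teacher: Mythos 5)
Your proof is correct and follows essentially the same route as the paper's: the same decomposition $W_n(w)=U_n(w)\sqcup A_n(w)$, the same identification $\mu([U_n(w)])=\sum_{\bar w\in V_n}\sigma_*(\mu)(\bar w)$ via fibre-completeness of $U_n(w)$ (which the paper states as an observation and you verify explicitly), and the same limit argument combining Remark \ref{8n.1.5}, Lemma \ref{8n.2} and non-atomicity. The only cosmetic difference is that the paper packages the conclusion as the formula $\mu(w)=\lim_{n\to\infty}\sigma_*(\mu)(V_n)$, showing $\mu(w)$ is determined by $\sigma_*(\mu)$, whereas you compare $\mu$ and $\mu'$ directly at each stage.
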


\begin{proof}
We fix an arbitrary word $w \in \cal L(X)$ and consider for any $n \geq 0$ the subsets $W_n(w), U_n(w)$ and $A_n(w)$ as defined in the equalities (\ref{eq8a.1}), (\ref{eq8a.3}) and (\ref{eq8a.4}), 
abbreviated here to $W_n, U_n$ and $A_n$ respectively.

Let us also fix some integer $n \geq 0$. From the definition of a weight function (see Section \ref{sec:2.1}) we see directly that for any measure $\mu$ on $X$ the corresponding weight function 
satisfies
\begin{equation}
\label{eq8a.5}
\mu(w) = \sum_{w' \in W_n(w)} \mu(w') \, .
\end{equation}
From (\ref{eq8a.2}) we know that 
the set $W_n$ decomposes as disjoint union of $U_n$ and $A_n$, 
so that (\ref{eq8a.5}) gives us
\begin{equation}
\label{eq7.8g}
\mu(w) = \mu(U_n) + \mu(A_n) \, ,
\end{equation}
where we set $\mu(U_n) := \underset{w' \in U_n}{\sum}\mu(w')$ and $\mu(A_n) := \underset{w' \in A_n}{\sum}\mu(w')$.

We now define $V_n := \sigma(U_n)$ and observe from the definition of $U_n$ that
one has 
$\sigma^{-1}(V_n) \cap \cal L(X) = U_n$. 
Hence we 
deduce from the definition of the push-forward measure $\sigma_*(\mu)$, 
together with the fact that by definition of $\mu \in \cal M(X)$ one has 
$\mu(w_0) = 0$ for any $w_0 \notin \cal L(X)$, 
that 
\begin{equation}
\label{eq7.9g}
\mu(U_n) 
= \sum_{v \in V_n} \sigma_*(\mu)(v) =: \sigma_*(\mu)(V_n) \, .
\end{equation}

We thus deduce from the equalities (\ref{eq7.8g}) and (\ref{eq7.9g}) that for any integer $n \geq 0$ one has
$$\mu(w) = \mu(U_n) + \mu(A_n) = \sigma_*(\mu)(V_n) + \mu(A_n) \, .$$
We now pass to the limit for $n \to \infty$ and 
recall from Remark \ref{8n.1.5} (2) 
that $\lim \mu(A_n) = \mu(A_\infty(w))$. From Lemma \ref{8n.2} we know $A_\infty(w) \subset \Per(X)$, so that our assumption that $\mu$ is 
non-atomic and hence 
zero on $\Per(X)$ implies $\lim \mu(A_n) = 0$. Hence 
$\lim \sigma_*(\mu)(V_n)$ does exist and satisfies
\begin{equation}
\label{eq7.10g}
\lim_{n \to \infty} \sigma_*(\mu)(V_n) = \mu(w) \, .
\end{equation}
Since the sets $V_n$ depend only on $w$ and not on $\mu$, equality 
(\ref{eq7.10g}) shows 
that $\mu(w)$ is entirely determined by the values of the induced measure $\sigma_*(\mu)$, which proves our claim.
\end{proof}

\begin{lem}
\label{8n.4}
Let $\mu$ be any shift-invariant measure on the subshift $X$. Then we have:
\begin{enumerate}
\item
If $\mu$ is periodic (i.e. $\mu(X \smallsetminus \Per(X) = 0$), then 
$\sigma_*(\mu)$ 
is also periodic (i.e. $\sigma_*(\mu)(\sigma(X) \smallsetminus \Per(\sigma(X)) = 0$).
\item
Under the additional hypothesis that $\sigma$ is shift-orbit injective, we also have: If $\mu$ is non-atomic (i.e. $\mu(\Per(X)) = 0$), then $\sigma_*(\mu)$ is also non-atomic (i.e. $\sigma_*(\mu)(\Per(\sigma(X))) = 0$).
\end{enumerate}
\end{lem}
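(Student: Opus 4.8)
The plan is to reduce both statements to a set-theoretic containment of $\sigma^\Z$-preimages, combined with the defining identity $\sigma_*(\mu)(S) = \mu((\sigma^\Z)^{-1}(S))$ of the push-forward (recall that for a letter-to-letter morphism $\sigma^\cal M = \sigma_*$). Throughout I would use that $\mu$ is supported on $X$, so $\mu(B) = \mu(B \cap X)$ for every measurable $B \subset \cal A^\Z$, that $\Per(X)$ and $\Per(\sigma(X))$ are measurable (as recorded just before the statement), and that, $\sigma$ being letter-to-letter, the induced map commutes with the shift: $T \circ \sigma^\Z = \sigma^\Z \circ T$.

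For statement (1) I would first observe that $\sigma^\Z$ carries periodic words to periodic words: if ${\bf x} \in X$ satisfies $T^k({\bf x}) = {\bf x}$, then $T^k(\sigma^\Z({\bf x})) = \sigma^\Z(T^k({\bf x})) = \sigma^\Z({\bf x})$, so $\sigma^\Z({\bf x}) \in \Per(\sigma(X))$. Reading this contrapositively, any ${\bf x} \in X$ whose image $\sigma^\Z({\bf x})$ is aperiodic must itself be aperiodic, which gives the containment
$$(\sigma^\Z)^{-1}\bigl(\sigma(X) \smallsetminus \Per(\sigma(X))\bigr) \cap X \ \subset\ X \smallsetminus \Per(X) \, .$$
Applying the push-forward identity, using that $\mu$ is supported on $X$, and then invoking the hypothesis $\mu(X \smallsetminus \Per(X)) = 0$ yields $\sigma_*(\mu)(\sigma(X) \smallsetminus \Per(\sigma(X))) \leq \mu(X \smallsetminus \Per(X)) = 0$, which is the claim.

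For statement (2) the relevant containment runs in the opposite direction, and it is exactly the content of Lemma \ref{8n.1}: under shift-orbit injectivity, if ${\bf x} \in X$ and $\sigma^\Z({\bf x})$ is periodic, then ${\bf x}$ is periodic. This gives
$$(\sigma^\Z)^{-1}\bigl(\Per(\sigma(X))\bigr) \cap X \ \subset\ \Per(X) \, ,$$
whence $\sigma_*(\mu)(\Per(\sigma(X))) = \mu\bigl((\sigma^\Z)^{-1}(\Per(\sigma(X))) \cap X\bigr) \leq \mu(\Per(X)) = 0$ by the non-atomicity hypothesis. The only genuine input here is Lemma \ref{8n.1}, which is precisely where the shift-orbit injectivity is used; everything else is a formal manipulation of the push-forward. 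In particular no limiting argument or delicate estimate is needed for this lemma, so I do not expect any real obstacle: the substantive difficulty has already been discharged in Lemma \ref{8n.1} (and, behind it, in the fact that a shift-orbit is closed precisely when the word is periodic).
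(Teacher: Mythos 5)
Your proof is correct. For part (2) it coincides in substance with the paper's: the only real input is Lemma \ref{8n.1}, and your formulation via the containment $(\sigma^\Z)^{-1}(\Per(\sigma(X))) \cap X \subset \Per(X)$, applied to $\mu$ in one stroke, is just a streamlined version of the paper's point-by-point argument over the countably many elements of $\Per(\sigma(X))$ (your version needs only measurability of that set, not its countability). For part (1), however, you take a genuinely different route: the paper writes a periodic measure as a countable sum $\mu = \sum_{w} \lambda_w \, \mu_w$ of weighted characteristic measures and invokes Lemma \ref{3.6} (d) to get $\sigma_*(\mu) = \sum_{w} \lambda_w \, \mu_{\sigma(w)}$, which visibly vanishes off $\Per(\sigma(X))$; you instead note that a letter-to-letter morphism satisfies $T \circ \sigma^\Z = \sigma^\Z \circ T$, hence $\sigma^\Z$ maps periodic words to periodic words, and then run the same preimage-containment manipulation as in (2). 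Your version is more elementary and uniform across the two parts, and it sidesteps the (implicit) step that an invariant measure carried by $\Per(X)$ decomposes as a countable sum of characteristic measures. What the paper's route buys is independence from the commutation identity: Lemma \ref{3.6} (d) holds for the transfer map $\sigma^{\cal M}$ of an arbitrary non-erasing morphism, so the paper's proof of (1) generalizes verbatim beyond the letter-to-letter/push-forward setting, whereas your argument is tied to that setting (which is harmless here, since the whole section assumes $\sigma$ letter-to-letter).
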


\begin{proof}
(1)  If $\mu(X \smallsetminus \Per(X)) = 0$, then $\mu$ is carried entirely by the (countable) set of periodic words in $X$: There exist a coefficient $\lambda_w \geq 0$ for any $w \in \cal L(X)$ such that, using the characteristic measures $\mu_w$ from (\ref{eq:charac-m}), the measure $\mu$ can be expressed as countable sum
$$\mu = \sum_{w \in \cal L(X)} \lambda_w \,\mu_w \, .$$
But then Lemma \ref{3.6} (d) gives directly
$\sigma_*(\mu) = \underset{w \in \cal L(X)}{\sum} \lambda_w \,\mu_{\sigma(w)}\,$, 
which is clearly the zero-measure outside of $\Per(\sigma(X))$.

\smallskip
\noindent
(2)
For any ${\bf x} \in \Per(\sigma(X))$ we know from Lemma \ref{8n.1} that $\sigma^{-1}({\bf x})$ consist of periodic words only. Hence the assumption $\mu(\Per(X)) = 0$ implies that $\sigma_*(\mu)({\bf x}) = \mu(\sigma^{-1}({\bf x})) = 0$. Since $\Per(X)$ is countable, this implies $\sigma_*(\mu)(\Per(X)) = 0$.
\end{proof}

\begin{prop}
\label{8n.4.5}
For the subshift $X \subset \cal A^\Z$ let $\cal M_{per}(X) \subset \cal M(X)$ and  $\cal M_{na}(X) \subset \cal M(X)$ be the subsets of periodic and of non-atomic invariant measures on $X$ respectively.

For any shift-orbit injective morphism $\sigma: \cal A^* \to \cal B^*$ the injectivity of the push-forward map 
$\sigma_*^X: \cal M(X) \to \cal M(\sigma(X)), \, \mu \mapsto \sigma_*(\mu)$ follows if one proves the injectivity for the restrictions of $\sigma_*^X$ to both, $\cal M_{per}(X)$ and $\cal M_{na}(X)$.
\end{prop}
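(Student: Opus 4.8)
The plan is to show that for a letter-to-letter morphism the push-forward map $\sigma_*$ respects the canonical decomposition (\ref{eq8.5}) of any invariant measure into its periodic and non-atomic parts, and then to reduce injectivity on all of $\cal M(X)$ to the two pieces $\cal M_{per}(X)$ and $\cal M_{na}(X)$.

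First I would take two measures $\mu, \mu' \in \cal M(X)$ with $\sigma_*(\mu) = \sigma_*(\mu')$ and write their canonical decompositions $\mu = \mu^{per} + \mu^{na}$ and $\mu' = {\mu'}^{per} + {\mu'}^{na}$ as in (\ref{eq8.5}), so that $\mu^{per}, {\mu'}^{per} \in \cal M_{per}(X)$ and $\mu^{na}, {\mu'}^{na} \in \cal M_{na}(X)$. Using the $\R_{\geq 0}$-linearity of the measure transfer (Lemma \ref{3.6}(a)), which for a letter-to-letter morphism coincides with the push-forward, I obtain $\sigma_*(\mu) = \sigma_*(\mu^{per}) + \sigma_*(\mu^{na})$, and likewise for $\mu'$.

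The crucial step is to identify these two summands with the periodic and the non-atomic parts of $\sigma_*(\mu)$. By Lemma \ref{8n.4}(1) the measure $\sigma_*(\mu^{per})$ is periodic, and by Lemma \ref{8n.4}(2) — this is precisely where the hypothesis that $\sigma$ is shift-orbit injective enters — the measure $\sigma_*(\mu^{na})$ is non-atomic. Since the splitting of any invariant measure on the subshift $\sigma(X)$ into a periodic and a non-atomic part is canonical (the analogue of (\ref{eq5n.unique}) for $\sigma(X)$), uniqueness forces $(\sigma_*(\mu))^{per} = \sigma_*(\mu^{per})$ and $(\sigma_*(\mu))^{na} = \sigma_*(\mu^{na})$, and similarly for $\mu'$.

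Finally, from the equality $\sigma_*(\mu) = \sigma_*(\mu')$ and the uniqueness of the decomposition on $\sigma(X)$ I would conclude $\sigma_*(\mu^{per}) = \sigma_*({\mu'}^{per})$ and $\sigma_*(\mu^{na}) = \sigma_*({\mu'}^{na})$. Applying the assumed injectivity of $\sigma_*^X$ restricted to $\cal M_{per}(X)$ gives $\mu^{per} = {\mu'}^{per}$, and applying the assumed injectivity restricted to $\cal M_{na}(X)$ gives $\mu^{na} = {\mu'}^{na}$; summing yields $\mu = \mu'$. The only real subtlety is the middle step, namely that $\sigma_*$ carries each summand of the decomposition into the corresponding summand on the target $\sigma(X)$; this is exactly the content of Lemma \ref{8n.4}, and shift-orbit injectivity is indispensable for the non-atomic half.
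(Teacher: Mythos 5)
Your proposal is correct and follows essentially the same route as the paper's own proof: decompose both measures via (\ref{eq8.5}), use Lemma \ref{8n.4} together with the uniqueness property (\ref{eq5n.unique}) applied to the common image measure to match up the periodic and non-atomic parts, and then invoke the assumed injectivity on each piece. Your write-up is merely a bit more explicit about the linearity of $\sigma_*$ and about where shift-orbit injectivity enters (namely in Lemma \ref{8n.4}(2)), both of which the paper uses implicitly.
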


\begin{proof}
Let $\mu_1, \mu_2 \in \cal M(X)$ be invariant measures which satisfy $\sigma_*(\mu_1) = \sigma_*(\mu_1) =: \mu_0$. Consider the canonical decompositions into a periodic and an non-atomic measure from (\ref{eq8.5}) given by $\mu_1 = \mu_1^{per} + \mu_1^{na}$, $\mu_2 = \mu_2^{per} + \mu_2^{na}$ and $\mu_0 = \mu_0^{per} + \mu_0^{na}$. From Lemma \ref{8n.4} 
and the uniqueness of the canonical decomposition for $\mu_0$ (see the equalities (\ref{eq5n.unique})) we derive 
that $\sigma_*(\mu_1^{per}) = \sigma_*(\mu_2^{per}) = \mu_0^{per}$ and $\sigma_*(\mu_1^{na}) = \sigma_*(\mu_2^{na}) = \mu_0^{na}$. Injectivity of $\sigma_*^X$ on $\cal M_{per}(X)$ and $\cal M_{na}(X)$ implies $\mu_1^{per} = \mu_2^{per}$ and $\mu_1^{na} = \mu_2^{na}$, which shows $\mu_1 = \mu_2$.
\end{proof}

\begin{thm}
\label{8n.5}
For any letter-to-letter morphism $\sigma: \cal A^* \to \cal B^*$ and any subshift $X \subset \cal A^\Z$ one has:  If $\sigma$ is shift-orbit injective in $X$, then the push-forward map $\cal M(X) \to \cal M(\sigma(X)), \, \mu \mapsto \sigma_*(\mu)$ is injective.
\end{thm}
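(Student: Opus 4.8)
The plan is to reduce the statement to two disjoint cases by invoking the already-established Proposition~\ref{8n.4.5}: since $\sigma$ is assumed shift-orbit injective in $X$, it suffices to prove that the push-forward map $\sigma_*^X$ is injective separately on the non-atomic measures $\cal M_{na}(X)$ and on the periodic measures $\cal M_{per}(X)$.

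For the non-atomic part there is nothing left to do: this is exactly Proposition~\ref{8.3n}. I recall that its proof extracts each value $\mu(w)$ from $\sigma_*(\mu)$ by writing $\mu(w) = \sigma_*(\mu)(V_n) + \mu(A_n)$ and letting $n \to \infty$, the decisive point being that $A_\infty(w)$ consists only of periodic words (Lemma~\ref{8n.2}), so that $\lim \mu(A_n) = 0$ whenever $\mu$ is non-atomic.

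For the periodic part I would argue directly with characteristic measures. Any $\mu \in \cal M_{per}(X)$ can be written as a countable sum $\mu = \sum_i \lambda_i\, \mu_{w_i}$ with coefficients $\lambda_i \geq 0$, where the $w_i$ run over primitive representatives of the (countably many) pairwise distinct periodic shift-orbits $\cal O_i := \cal O(w_i^{\pm\infty}) \subset X$. By Lemma~\ref{3.6}~(d) one has $\sigma_*(\mu_{w_i}) = \mu_{\sigma(w_i)}$; writing $\sigma(w_i) = v_i^{r_i}$ with $v_i$ primitive and $r_i \geq 1$, this equals $r_i\, \mu_{v_i}$, a positive multiple of the characteristic measure of the image orbit $\sigma^T(\cal O_i) = \cal O(v_i^{\pm\infty})$. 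The critical use of shift-orbit injectivity is that the orbits $\sigma^T(\cal O_i)$ are then pairwise distinct, so the measures $\mu_{v_i}$ have pairwise disjoint supports and are therefore linearly independent. Hence $\sigma_*(\mu) = \sum_i (\lambda_i r_i)\, \mu_{v_i}$ determines every product $\lambda_i r_i$, and as each $r_i \geq 1$ depends only on the orbit $\cal O_i$ and not on $\mu$, it determines every $\lambda_i$; thus $\sigma_*^X$ is injective on $\cal M_{per}(X)$, which together with the non-atomic case completes the proof.

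The step I expect to require the most care is the periodic case, precisely because a letter-to-letter morphism need not preserve shift-periods even when it is shift-orbit injective (compare the proper-power phenomenon in Remark~\ref{2.7d}): a primitive $w_i$ may well have $\sigma(w_i)$ a proper power, so one must resist conflating $\mu_{\sigma(w_i)}$ with a characteristic measure of a primitive image word. Carrying the multiplicity $r_i$ explicitly and leaning on the disjointness of the supports of the $\mu_{v_i}$---itself an immediate consequence of shift-orbit injectivity---is what resolves this subtlety. All the genuinely delicate analysis has already been discharged upstream in Lemma~\ref{8n.2} and Proposition~\ref{8.3n}, and the present argument merely assembles it with the elementary periodic-orbit computation above.
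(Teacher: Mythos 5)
Your proposal is correct and follows essentially the same route as the paper: the reduction via Proposition \ref{8n.4.5}, the appeal to Proposition \ref{8.3n} for the non-atomic measures, and an elementary periodic-orbit argument using shift-orbit injectivity for the rest. The only cosmetic difference is in the periodic case: the paper recovers orbit masses by observing that $\sigma^{-1}({\bf x}) \cap X$ lies in a single periodic orbit, so that $\mu(\cal O({\bf y}_i)) = \sigma_*(\mu)(\cal O({\bf x}))$, whereas you push forward characteristic measures and carry the proper-power multiplicity $r_i$ explicitly --- which is precisely the subtlety the paper isolates afterwards in Remark \ref{8n.7}~(1).
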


\begin{proof}
From Proposition \ref{8n.4.5} we know that it suffices to prove the injectivity of the measure map $\mu \mapsto \sigma_*(\mu)$ for the two cases, where 
$\mu$ is periodic or where $\mu$ is non-atomic.

The latter case is already dealt with in Proposition \ref{8.3n}, so that we can from now on assume that $\mu$ is carried by $\Per(X)$. But in this case the injectivity of the map $\mu \mapsto \sigma_*(\mu)$ is trivial, since by the injectivity assumption on the shift-orbits together with Lemma \ref{8n.1} we know that for every periodic word ${\bf x} \in \sigma(X)$ the set $\sigma^{-1}({\bf x}) \cap X$ 
consists of words ${\bf y}_i$ which are periodic as well and belong all to a single shift-orbit $\cal O({\bf y}_i)$. Hence we have $\mu(\cal O({\bf y}_i)) = \sigma_*(\mu)(\cal O({\bf x}))$. 
Since any invariant measure on $X$ which is carried by $\Per(X)$ is determined by knowing its evaluation on every periodic orbit, it follows that 
$\mu$ is entirely determined by $\sigma_*(\mu)$.
\end{proof}

The following remark serves as continuation of the discussion started at the end of last section:

\begin{rem}
\label{8n.7}
(1)
We would like to point out a subtlety in the last proof:  Although the preimage set $\sigma^{-1}(\cal O({\bf x}))$ of any periodic orbit $\cal O({\bf x})$ is a single periodic orbit, it is in general not true that the preimage set $\sigma^{-1}({\bf x})$ consists of a single biinfinite word ${\bf y}_i \in \sigma^{-1}(\cal O({\bf x}))$. It is here that the missing assumption ``shift-period preserving'' materializes, compared to the case where we assume ``$\sigma$ recognizable in $X$'' instead of our weaker assumption ``$\sigma$ is shift-orbit injective''.

\smallskip
\noindent
(2)
On the other hand, as pointed out at the end of last section, ``shift-orbit injective'' is a slightly stronger property than ``recognizable for aperiodic points'', and as we have observed in the last section, the latter doesn't suffice to deduce the injectivity of the push-forward map on $\cal M(X)$.
In fact, this injectivity 
fails in general already for non-atomic measures on
the given subshift $X$, as can be seen easily from the morphism which sends all letters of an alphabet of size $\geq 2$ to a single letter of the image alphabet; any such morphism is a forteriori recognizable for aperiodic points in the full shift
(see Warning \ref{Warning}).
\end{rem}

We terminate this section by 
considering some concrete examples how Theorem \ref{8n.5} or rather Theorem \ref{5n.3} can be put to use. An immediate application, for $\sigma, X$ and $Y = \sigma(X)$ as above, would be to deduce from the assumption that $Y$ is uniquely ergodic the conclusion that $X$ is also uniquely ergodic. Below we exhibit something slightly more elaborate:

\begin{example}
\label{6.9new}
(1)
Let us first consider for $\cal A = \{a, b\}$ and $\cal B = \{c, d\}$ the morphisms $\sigma_i: \cal A^* \to \cal B^*$ (for $i = 1$ and $i = 2$) given by
$$\sigma_1(a) = c^2\, , \,\, \sigma_1(b) = d \quad \text{and} \quad \sigma_2(a) = cdc\, , \,\, \sigma_2(b) = dcd \, .$$
Neither of these morphisms is recognizable in the full shift, since $c^{\pm \infty}$ admits two different desubstitutions with respect to $\sigma_1\,$, and $(cd)^{\pm \infty}$ admits three different desubstitutions with respect to $\sigma_2\,$. Nevertheless, it is easy to verify by elementary arguments that both, $\sigma_1$ and $\sigma_2\,$, are 1-1 on the set of shift-orbits of $\cal A^\Z$.

\smallskip
\noindent
(2)
Consider now any subshift $X \subset \cal A^\Z$ with image $Y_i = \sigma_i(X)$, and assume that $X$ admits $k \geq 2$ distinct ergodic probability measures
$\mu_j$. Then the $\mu_j^{\sigma_i}$ give $k$ distinct ergodic measures on $Y_i\,$, and after normalization we obtain $k$ distinct ergodic probability measures on $Y_i\,$, which are the only ones on $Y_i$ (by the surjectivity of the measure transfer map $\sigma^\cal M$).

A non-evident case occurs if  $X$ is one of the subshifts exhibited in Theorem 7.4 of \cite{BHL2.8-II} which (i) are minimal, (ii) have topological entropy $h_X = 0$, and (iii) possess infinitely many pairwise different ergodic probability measures. It follows that the $Y_i$ also have all of the properties (i), (ii) and (iii).

\smallskip
\noindent
(3)
There are many direct generalizations of the morphisms $\sigma_i$ as given above, for instance those given for any $n \geq 2$ and $1 \leq k \leq n$ by
$$\sigma'_1(a_k) = a_k^{j_k} \,\, (j_k \geq 1) \quad \text{and} \quad 
\sigma'_2(a_k) = a_k a_{k+1} \ldots a_n 
a_1 a_2 \ldots a_{k-1} 
a_k \, ,$$
or by suitable compositions of those (and of morphisms which are recognizable in the full shift). 
For each of these composed morphisms 
the above observation yields subshifts $Y_i$ as in (2) above.
This has lead us to pose the following question:

{\em 
Does every 
infinite transitive sofic subshift 
contain a subshift with all the properties (i), (ii) and (iii) from part (2) above ?}
\end{example}

\end{document}